\newcommand{\Pbf}{\mathbf{P}}
\newcommand{\Ebf}{\mathbf{E}}
\newcommand{\cpc}{\mathrm{cap}}
\newcommand{\TT}{\mathbb{T}}
\title{How thin does random interlacement have to be so that a random walk can see through it?}
\author{Nicolas Bouchot}
\begin{document}
	
	\pagestyle{plain}
	
	\maketitle
	
\begin{abstract}
	\noindent
	The random interlacements $\mathscr{I}(u)$ at level $u$ has been introduced by Sznitman, as a Poissonian collection of independent simple random walk trajectories on $\ZZ^d$, $d\geq 3$, with intensity  $u>0$.
	Since then, several works investigated the properties of the random interlacements intersected with large sets of~$\ZZ^d$. 
	In this paper, we study the asymptotic behavior of the capacity of $\mathscr{I}(u) \cap D_N$, where $D_N$ is the blow up of a compact set $D$, with typical size $N$.
	We determine the correct window $(u_N)_{N\geq 1}$ of the intensity parameter for which the capacity $\cpc(\mathscr{I}(u_N)\cap D_N)$ starts to become negligible compared to $\cpc (D_N)$; this roughly means that a random walk starting from far away starts to see through $\mathscr{I}(u_N)\cap D_N$.
	In the same spirit, we investigate the capacity of the simple random walk conditioned to stay in a large Euclidean ball up to time $t_N$, and find similar asymptotics by taking $t_N = u_N N^d$.
	\\[0.2cm]
	\textsc{Keywords:}  capacity, random interlacement, conditioned walk\\[0.2cm]
	\textsc{2020 Mathematics subject classification:} Primary 60G50, 60K35.
\end{abstract}

\section{Introduction}

The random interlacement at level $u>0$ is a random subset $\mathscr{I}(u) \subset \ZZ^d$, $d \geq 3$, which was introduced by Sznitman in~\cite{sznitmanVacantSetRandom2010}, in order to study the  trace of the simple random walk on the torus $\mathbb T_n \defeq \ZZ^d/n\ZZ^d$.
It can be thought as a Poissonian collection of bi-infinite simple random walk trajectories on $\ZZ^d$ (that are transient since $d\geq 3$), with an intensity parameter $u>0$.
We refer to~\cite{drewitzIntroductionRandomInterlacements2014} for a detailed introduction and overview.

In the following, we denote by $\mathbf{P}_x$ the law of a simple random walk $(S_n)_{n\geq 0}$ on $\ZZ^d$, with starting point $x \in \ZZ^d$. We also simply write $\Pbf = \Pbf_0$.
We also denote $K \Subset \ZZ^d$ to say that $K$ is a finite subset of $\ZZ^d$.
Then, a characterization of the law $\PP$ of the random interlacement $(\mathscr{I}(u))_{u > 0}$ is given by the following relation
\begin{equation}
	\label{def:interlacement}
	\forall K \Subset \ZZ^d \,, \quad \PP \big( \mathscr{I}(u) \cap K = \varnothing \big) = \exp \big( -u \, \capN{K} \big) \, .
\end{equation}
Here, $\capN{K}$ denotes the capacity of the set $K$ in $\mathbb Z^d$, defined as
\begin{equation}
	\label{def:capacity}
	\capN{K} \defeq \sum_{x\in K} \bP_x\big( H_K = +\infty \big)\,, 
\end{equation}
with $H_A\defeq \min\{i\geq 1, S_i \in A\}$ the first return time of the set $A$; by convention $\min \varnothing = +\infty$.
The capacity is a non-decreasing function which characterizes the size of a subset seen by a (simple) random walk coming from far away. In this paper, we are interested in how the capacity of a set compares with the capacity of the trace of random walk(s) on this set.

\subsection{Our main question}

We consider a bounded open set $D \subset \RR^d$ and we let $D_N \defeq (N \cdot D) \cap \ZZ^d$ be its (discrete) blow up of size $N$.
We are interested in $\mathscr{I}(u) \cap D_N$ the trace of the random interlacement inside $D_N$, 
and more precisely in the ratio of their capacities
\begin{equation}
	\label{def:ratio}
	\varsigma^{\mathrm{RI}}_{N,u} = \varsigma^{\mathrm{RI}}_{N,u}(D) \defeq \frac{\capN{\mathscr{I}(u) \cap D_N} }{ \capN{D_N}} \,,
\end{equation}
in the regime  $n\to\infty$ and $u\downarrow 0$.
Notice that since $\mathscr{I}(u)\cap D_N \subseteq D_N$, we have that $\varsigma_{n,u}\in [0,1]$.
In particular, we are looking for a condition on the sequence $(u_N)_{n\geq 0}$ for the ratio $\varsigma_{n}$ to converge either to $1$ or $0$, in $\mathbb P$-probability.

Let us define
\begin{equation}\label{eq:def-Theta}
	\Theta_N \defeq
	\begin{cases}
		N & \text{ if } d = 3 \,, \\
		N^2/\log N & \text{ if } d = 4 \,, \\
		N^2 & \text{ if } d \geq 5 \,.
	\end{cases} 
\end{equation}
The motivation behind this definition \eqref{eq:def-Theta} of $\Theta_N$ is so that $\Theta_N$ and $\Ebf \big[ \cpc(\mathcal{R}_{N^2}) \big]$ are of the same order (see Theorem \ref{th:cv-cap-range} below).
We then have the following criterion.

\begin{theorem}\label{th:ratio-cap-entrelac}
	Let $d \geq 3$ and let  $(u_N)_{N\geq 1}$ be a  sequence of positive numbers.
	For any compact regular set $D \subset \RR^d$, we have the following convergences in  $\PP$-probability (and in $L^1$):
	\begin{equation}\label{eq:convergence}
		\varsigma^{\mathrm{RI}}_{N,u_N} = \varsigma_{N,u_N}^{\mathrm{RI}}(D) \defeq \frac{\capN{\mathscr{I}(u_N) \cap D_N}}{\capN{D_N}} \xrightarrow[N \to +\infty]{}
		\begin{dcases}
			0 & \quad \text{ if }\ \lim_{N\to\infty} u_N \Theta_N  = 0 \,, \\
			1 & \quad \text{ if }\ \lim_{N\to\infty} u_N \Theta_N = +\infty \,.
		\end{dcases} 
	\end{equation} 
	Moreover, if $0<\liminf_{N\to\infty} u_N \Theta_N\leq  \limsup_{N\to\infty} u_N \Theta_N <+\infty$, then 
	\begin{equation}
		\label{eq:criticalcase-RI}
		0<\liminf_{N\to\infty} \EE[\varsigma^{\mathrm{RI}}_{N,u_N}]\leq  \limsup_{N\to\infty}  \EE[\varsigma^{\mathrm{RI}}_{N,u_N}] < 1 \,.
	\end{equation}
\end{theorem}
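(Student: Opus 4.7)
The plan is to reduce the ratio $\varsigma^{\mathrm{RI}}_{N,u_N}$ to the Laplace transform of the capacity of the range of a simple random walk inside $D_N$, and then invoke the scale $\Theta_N$ given by Theorem~\ref{th:cv-cap-range}. The starting point is the deterministic identity
\[
	\frac{\capN{A}}{\capN{B}}\;=\;\Pbf_{\hat e_B}\!\big(\exists\,n\geq 0\,:\,S_n\in A\big)\,,\qquad \hat e_B \defeq e_B/\capN{B}\,,
\]
valid for any finite sets $A\subseteq B\Subset\ZZ^d$. Indeed the last-exit decomposition gives $\Pbf_y(\exists n\geq 0,\,S_n\in A)=\sum_{z\in A}G(y,z)e_A(z)$, and since $Ge_B\equiv 1$ on $B$, averaging against $\hat e_B(y)$ collapses the right-hand side to $\capN{A}/\capN{B}$. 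Applied with $A=\mathscr{I}(u_N)\cap D_N$ and $B=D_N$, and then integrating over the interlacement via~\eqref{def:interlacement} applied to the random finite set $\mathcal R\cap D_N$, this yields the key formula
\[
	\Ebf\big[\varsigma^{\mathrm{RI}}_{N,u_N}\big]\;=\;1-\Ebf_{\hat e_{D_N}}\!\Big[\exp\!\big(-u_N\,\capN{\mathcal R\cap D_N}\big)\Big]\,,
\]
where $\mathcal R\defeq\{S_n:n\geq 0\}$ is the range of the walk under $\Pbf_{\hat e_{D_N}}$.

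Next I would show that $\capN{\mathcal R\cap D_N}$ concentrates at the scale $\Theta_N$. A one-line computation based again on $Ge_{D_N}\equiv 1$ on $D_N$ gives $\Ebf_{\hat e_{D_N}}\big|\{n\geq 0:S_n\in D_N\}\big|=|D_N|/\capN{D_N}\asymp N^2$, so on average the walk spends a time of order $N^2$ inside $D_N$. Combined with Theorem~\ref{th:cv-cap-range} and an excursion decomposition (the walk makes $O(1)$ excursions in $D_N$ before escaping to infinity, each of typical length $N^2$), this should produce constants $0<c<C<+\infty$ such that
\[
	\Pbf_{\hat e_{D_N}}\!\big(c\,\Theta_N\leq\capN{\mathcal R\cap D_N}\leq C\,\Theta_N\big)\xrightarrow[N\to\infty]{}1\,.
\]

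Plugging this concentration back into the Laplace identity delivers the three regimes at once. When $u_N\Theta_N\to 0$ the exponential is close to $1$ with overwhelming probability, and $\Ebf[\varsigma^{\mathrm{RI}}_{N,u_N}]\to 0$. When $u_N\Theta_N\to+\infty$ the lower bound $\capN{\mathcal R\cap D_N}\geq c\Theta_N$ forces the Laplace transform to vanish, and $\Ebf[\varsigma^{\mathrm{RI}}_{N,u_N}]\to 1$. In the critical regime the two-sided concentration translates directly into matching two-sided bounds on $\Ebf[\varsigma^{\mathrm{RI}}_{N,u_N}]$, which is exactly~\eqref{eq:criticalcase-RI}. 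The passage to $L^1$ and $\PP$-probability convergence is free since $\varsigma^{\mathrm{RI}}_{N,u_N}\in[0,1]$. The main technical obstacle is the concentration step: Theorem~\ref{th:cv-cap-range} is stated for a walk of deterministic length $N^2$ from a fixed starting point, whereas here the starting measure $\hat e_{D_N}$ is concentrated near $\partial D_N$, so one has to handle both the trajectories that leave $D_N$ almost immediately after being launched and the possible re-entries of the walk into $D_N$ before it finally escapes to infinity.
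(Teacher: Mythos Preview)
Your approach is essentially the paper's: the Laplace identity
\[
\Ebf\big[\varsigma^{\mathrm{RI}}_{N,u_N}\big]=1-\Ebf_{\hat e_{D_N}}\!\big[\exp(-u_N\,\capN{\mathcal R\cap D_N})\big]
\]
is exactly equation~\eqref{eq:rapport-cap-exp} (the paper derives it via the complementary difference formula~\eqref{eq:diff-cap-inclusion}, but that is your identity rearranged), and the reduction to controlling $\capN{\mathcal R\cap D_N}$ at scale $\Theta_N$ is the heart of both arguments.

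The one place where your sketch overshoots is the concentration claim. The assertion that
\[
\Pbf_{\hat e_{D_N}}\!\big(c\,\Theta_N\leq\capN{\mathcal R\cap D_N}\leq C\,\Theta_N\big)\xrightarrow[N\to\infty]{}1
\]
for \emph{fixed} constants $c,C$ is too strong and in fact false: since the walk is launched from $\partial D_N$ under $\hat e_{D_N}$, for any fixed $\eps>0$ it exits $D_N$ within time $\eps N^2$ with probability bounded away from zero (Brownian scaling), so $\capN{\mathcal R\cap D_N}/\Theta_N$ cannot concentrate on a compact subset of $(0,\infty)$. In dimension $d=3$ this is compounded by Theorem~\ref{th:cv-cap-range} giving only a distributional limit. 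What the paper proves instead (Proposition~\ref{lem:deviation-cap-infty}) is the tightness statement
\[
\limsup_{N\to\infty}\Pbf_{\hat e_{D_N}}\!\big(\capN{\mathcal R\cap D_N}\notin[\eta,\tfrac1\eta]\,\Theta_N\big)\leq\delta(\eta),\qquad \lim_{\eta\downarrow 0}\delta(\eta)=0,
\]
which suffices: one bounds the Laplace transform on the good event and sends $\eta\to 0$ \emph{after} taking $N\to\infty$. The two tails --- trajectories escaping too fast and those lingering too long --- are handled in Section~\ref{ssec:control-cap-range-infty-RI} by, respectively, an inner approximation $D_N^\eps\subset D_N$ together with Proposition~\ref{prop:capacite-blowup}, and a last-exit decomposition combined with Donsker's invariance principle; this is precisely the technical work you flag in your final sentence.
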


\begin{remark}
	In Theorem \ref{th:ratio-cap-entrelac}, the assumption that $D$ is regular means that $\capR{D} = \capR{\mathrm{Int} (D)}$, where $\capR{\, \cdot \,}$ denotes the Newtonian capacity on $\mathbb R^d$ (see \cite{port2012brownian} for the necessary definitions). It is required to compare $\capR{D}$ with $\capN{D_N}$ (see Proposition \ref{prop:capacite-blowup} below).
\end{remark}

\begin{remark}
	In the case where $\lim_{N\to\infty} u_N \Theta_N = a \in (0,+\infty)$, we expect that the ratio $\varsigma_{N,u_N}$ converges in distribution to a random variable $\varsigma_a \in (0,1)$.
	A first step towards this would be to prove the following tightness-type result:
	for any $\eps>0$ there is some $\eta>0$ such that
	\begin{equation}
		\PP\big( \eta < \varsigma^{\mathrm{RI}}_{N,u_N} < 1-\eta\big) \geq 1-\eps \,.
	\end{equation} 
	The bounds~\eqref{eq:criticalcase-RI} only shows that for sufficiently small $\eta>0$, the probability $\PP\big( \eta < \varsigma_N < 1-\eta\big)$ remains bounded away from $0$.
\end{remark}


\begin{remark}
	\label{rem:holes}
	Let us notice that the parameter $u$ is related to the density (with respect to the volume) of random interlacements in $\ZZ^d$.
	Indeed, for any fixed $u$, we have that 
	\begin{equation}
		\label{eq:def-density}
		\lim_{N\to\infty}  \frac{|\mathscr{I}(u) \cap D_N|}{ |D_N|} = 1-e^{- u / g_0} \eqdef \vartheta(u) \,,
	\end{equation}
	with $g_0 \defeq \sum_{n=0}^{\infty} \mathbf{P}_0(S_n=0)$.
	In particular, when $u_N\downarrow 0$, the random interlacement $\mathscr{I}(u_N)$ is very sparse and has large holes, but the capacity $\cpc (\mathscr{I}(u_N) \cap D_N)$ may remains equivalent to $\cpc (D_N)$, meaning that the holes are not ``visible'' for a random walk arriving from far away.
	In that sense, Theorem~\ref{th:cv-cap-range} gives a threshold on $(u_N)_{n\geq 1}$ for these holes to become ``visible''.
\end{remark}

\paragraph*{About the capacity of a random walk conditioned to stay in a ball}

Random interlacements can be interpreted as an infinite volume measure for the simple random walk on the torus $\TT^d_N = (\ZZ/ N \ZZ)^d$. The link between these two objects has been a powerful tool to tackle problems involving this random walk on the torus (see \cite{teixeiraFragmentationTorusRandom2011,beliusGumbelFluctuationsCover2013,SZN-cylindre}) as well as large-deviation events for the simple random walk on $\ZZ^d$ (see \cite{liLowerBoundDisconnection2014,chiariniLowerBoundsBulk2023}).

The coupling between the two objects presented in \cite{teixeiracoupling} (and previous works) relies on a heuristical argument, which roughly explains that given a subset $K$ of the torus, the random walk may visit $K$ and then stay in $\TT^d_N \setminus K$ a time larger than its mixing time. The eventual return to $K$ would then be independent from the last times the random walk was in $K$, and the number of such returns is well-approximated by a Poisson random variable.

Consider the discrete Euclidean ball $B_N$ with radius $N \geq 1$ and a sequence $(t_N)_{N \geq 1}$, and write $\mathcal{R}_{t_N} = \mathset{S_0 , \, \dots , S_{t_N}}$ the range up to time $t_N$ of the simple random walk on $\ZZ^d$. Under $\Pbf_0( \cdot \, | \, \mathcal{R}_{t_N} \subseteq B_N)$, the random walk behaves in a way such that the heuristical argument still has some truth (see \cite{bouchot2024confinedrandomwalklocally} for a more rigorous statement). By comparison to random interlacements, the range under $\Pbf_0( \cdot \, | \, \mathcal{R}_{t_N} \subseteq B_N)$ can be expected to resemble a random interlacement with parameter $u_N \sim c t_N N^{-d}$. Therefore, we can conjecture an analogue to Theorem \ref{th:ratio-cap-entrelac} for $\mathcal{R}_{t_N}$ under $\Pbf_0( \cdot \, | \, \mathcal{R}_{t_N} \subseteq B_N)$, by plugging $u_N = t_N N^{-d}$. We prove this result in Section \ref{sec:capacite-CRW}, although our method does not rely on any coupling.

\begin{theorem}\label{th:ratio-cap-RW-boule}
	Let $d \geq 3$ and let  $(t_N)_{n\geq 0}$ be a  sequence of positive numbers, under $\Pbf(\cdot \, | \, \mathcal{R}_{t_N} \subseteq B_N)$ we have the following convergences in  $\PP$-probability and in $L^1$:
	\begin{equation}\label{eq:convergence-RW}
		\varsigma_{N,t_N}^{\mathrm{RW}} \defeq \frac{\cpc(\mathcal{R}_{t_N})}{\cpc(B_N)} \xrightarrow[N \to +\infty]{}
		\begin{dcases}
			0 & \quad \text{ if }\ \lim_{N\to\infty} t_N \Theta_N / N^d  =0 \,, \\
			1 & \quad \text{ if }\ \lim_{N\to\infty} t_N \Theta_N / N^d = +\infty \,.
		\end{dcases} 
	\end{equation} 
\end{theorem}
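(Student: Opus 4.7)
The plan is to mirror the proof of Theorem \ref{th:ratio-cap-entrelac}, exploiting the observation that a simple random walk of duration $t_N$ conditioned to stay in $B_N$ decomposes naturally into $M := \lceil t_N/N^2 \rceil$ successive blocks of length $T \asymp N^2$, each of which plays the role of a single interlacement trajectory hitting $B_N$. An individual block has range of expected capacity $\asymp \Theta_N$ by Theorem \ref{th:cv-cap-range}, the blocks are almost independent via the Markov property, and the heuristic substitution $u_N = t_N/N^d$ aligns the new threshold $t_N\Theta_N/N^d$ with the interlacement threshold $u_N\Theta_N$.

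First I would handle the easy regime $t_N \lesssim N^2$: the conditioning event has probability bounded away from zero, so Theorem \ref{th:cv-cap-range} combined with a case-by-case check gives $\Ebf[\cpc(\mathcal{R}_{t_N})\,|\,\mathcal{R}_{t_N}\subseteq B_N]/\cpc(B_N) \asymp t_N\Theta_N/N^d$. In the main regime $t_N \gg N^2$, the upper bound is handled by subadditivity: conditioning on $\mathcal{F}_{(i-1)T}$ and on $\{\mathcal{R}_{t_N}\subseteq B_N\}$, block $i$ is a random walk of length $T$ started at $S_{(i-1)T}$ and further conditioned to stay in $B_N$ for $t_N - (i-1)T \geq N^2$ additional steps. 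Heat-kernel estimates for the walk killed on exiting $B_N$ (comparability to a Gaussian at scale $N$) yield $\Ebf[\cpc(\mathcal{R}^{(i)}) \,|\, \mathcal{F}_{(i-1)T}, \mathcal{R}_{t_N}\subseteq B_N] \leq C\Theta_N$, so that $\Ebf[\cpc(\mathcal{R}_{t_N})\,|\,\mathcal{R}_{t_N}\subseteq B_N] \leq CM\Theta_N \asymp \cpc(B_N)\cdot(t_N\Theta_N/N^d)$, giving the convergence to zero when $t_N\Theta_N/N^d\to 0$.

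For the lower bound ($\varsigma^{\mathrm{RW}}_{N,t_N}\to 1$ when $t_N\Theta_N/N^d\to\infty$) I would adapt the saturation argument from the proof of Theorem \ref{th:ratio-cap-entrelac}. Using the probabilistic meaning of capacity, the ratio $\cpc(\mathcal{R}_{t_N})/\cpc(B_N)$ equals $\lim_{|x|\to\infty}\Pbf_x(H_{\mathcal{R}_{t_N}}<\infty\,|\,H_{B_N}<\infty)$, and one aims to show that this probability tends to $1$. A walk from infinity entering $B_N$ performs many excursions; one must show that these excursions hit $\mathcal{R}_{t_N}$ with probability $1-o(1)$, which follows because the blocks $\mathcal{R}^{(i)}$ collectively function as a nearly-independent family of trajectories whose union saturates $\cpc(B_N)$ once $M\Theta_N \gg \cpc(B_N)$ --- exactly the mechanism underlying the interlacement result. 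A second-moment computation on $\cpc(\mathcal{R}_{t_N})$, exploiting the block decomposition and the decay of correlations provided by the killed-walk Green function, then upgrades the first-moment bounds to convergence in probability; $L^1$ convergence is immediate since $\varsigma^{\mathrm{RW}}_{N,t_N}\in[0,1]$.

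The main obstacle will be the lower bound: capacity is subadditive but not superadditive, so the block decomposition alone yields no useful lower bound on $\cpc(\mathcal{R}_{t_N})$, and the saturation has to be argued probabilistically. The delicate point is that the blocks are only approximately independent --- the inter-block dependence coming from the killed-walk transition kernel must be carefully handled by heat-kernel estimates to ensure that the ``visibility'' of the union of the blocks from infinity indeed saturates at $\cpc(B_N)$.
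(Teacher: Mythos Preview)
Your upper bound is correct and essentially matches the paper: Proposition~\ref{prop:tilt:borne-sup} is exactly the block decomposition plus subadditivity you describe, with Lemma~\ref{lem:rester-ds-boule} supplying the needed control on the conditioning (in place of your heat-kernel estimates).

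The lower bound, however, has a genuine gap. Your mechanism---``a walk from infinity entering $B_N$ performs many excursions''---is wrong: in $d\geq 3$ the incoming walk is transient and makes only $O(1)$ excursions through $B_N$, so you cannot accumulate hitting probability from that side. And arguing that these excursions hit $\mathcal{R}_{t_N}$ ``because the blocks saturate $\cpc(B_N)$'' is circular: that saturation is precisely what you are trying to prove. The second-moment idea you mention is undeveloped and would require controlling $\mathrm{Var}(\cpc(\mathcal{R}_{t_N}))$ under the conditional measure, which is not straightforward.

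The paper's key move is to \emph{swap roles}: starting from \eqref{eq:diff-cap-inclusion} one has $1-\varsigma^{\mathrm{RW}}=\sum_z \bar e_{B_N}(z)\Pbf^2_z(\mathcal{R}^2_\infty\cap\mathcal{R}^1_{t_N}=\varnothing)$, and by Fubini this equals $\Ebf^2_z[\Pbf^1_0(\mathcal{R}^2_\infty\cap\mathcal{R}^1_{t_N}=\varnothing\mid\mathcal{R}^1_{t_N}\subseteq B_N)]$. Now the \emph{confined} walk, which genuinely has $\asymp t_N/N^2$ nearly-independent blocks, must avoid a \emph{fixed} target $\mathcal{R}^{2,\eps}_\infty$ of capacity $\asymp\Theta_N$ (Lemma~\ref{lem:proba-mauvaise-config}). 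For $d\geq 4$ this avoidance probability is bounded via an eigenvalue comparison: removing $\mathcal{R}^{2,\eps}_\infty$ from $B_N$ shifts the principal eigenvalue by at least $cN^{-d}\cpc(\mathcal{R}^{2,\eps}_\infty)$ (Lemma~\ref{lem:difference-vp-Dn-moins-range}), giving a bound $\exp(-c\,t_N\Theta_N/N^d)$. For $d=3$ the eigenvalue route only works when $t_N\gg N^2\log N$, so the paper instead counts excursions of the confined walk between shells and shows each has a uniformly positive chance of hitting the target (Lemmas~\ref{lem:toucher-range-excursion}--\ref{lem:transfo-laplace-nb-excursions}). None of this machinery appears in your sketch.
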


\begin{remark}
	We stated the result in the case of the ball $B_N$, but Theorem \ref{th:ratio-cap-RW-boule} remains valid for \og{}reasonable\fg{} subsets $D_N$ in dimension $d \geq 4$ (see the hypothesis on $D$ in the next chapter). However, we are unable to get the full statement for dimension $d = 3$. Indeed, properly relaxing the random walk takes a time $N^2 \log N$ (to dwarf the next terms in the spectral decomposition, see Lemma \ref{lem:eviter-via-vp}), thus we are not able to handle the case where $N^2 \ll t_N \leq C \log N$ for some constant $C > 0$, which corresponds to $1 \ll t_N \Theta_N N^{-d} \leq C \log N$.
\end{remark}

\begin{remark}
	With our proofs, it is easy to see that an analogue to \eqref{eq:criticalcase-RI} also holds for the random walk under $\Pbf( \cdot \, | \, \mathcal{R}_{t_N} \subseteq B_N)$. However, getting the full statement would require a bit more control on the constants appearing in the proofs, which is not the focus of this paper. We can however prove that there exist $\alpha_- , \alpha_+ \in (0, +\infty)$ such that
	$\varlimsup\limits_{N \to +\infty} \frac{t_N \Theta_N}{N^d} \leq \alpha_-$ implies $\varlimsup\limits_{N \to +\infty} \Ebf \left[ \varsigma_{N,t_N}^{\mathrm{RW}}  \, | \, \mathcal{R}_{t_N} \subseteq B_N \right] < 1$; and similarly if $\varliminf\limits_{N \to +\infty} t_N \Theta_N N^{-d} \geq \alpha_+$, then $\varliminf\limits_{N \to +\infty} \Ebf [\varsigma_{N,t_N}^{\mathrm{RW}}  \, | \, \mathcal{R}_{t_N} \subseteq B_N ] > 0$.
\end{remark}

\paragraph*{Comparison with the volume of the random walk.}

Instead of the capacity, it is natural to study the volume of the random walk conditioned to stay in $B_N$. It is relatively easy to see that in dimension $d \geq 3$, the relative volume of the simple random walk undergoes a phase transition according to the following criterion: under $\Pbf( \cdot \, | \, \mathcal{R}_{t_N} \subseteq B_N)$,
\begin{equation}\label{eq:convergence-RW-volume}
	v_{N,t_N}^{\mathrm{RW}} \defeq \frac{|\mathcal{R}_{t_N}|}{|B_N|} \xrightarrow[N \to +\infty]{\PP}
	\begin{dcases}
		0 & \quad \text{ if }\ \lim_{N\to\infty} t_N N^{-d} = 0 \,, \\
		1 & \quad \text{ if }\ \lim_{N\to\infty} t_N N^{-d} = +\infty \,.
	\end{dcases} 
\end{equation} 
One way to understand this phase transition is through the coupling between the SRW on the torus and random interlacements, or more closely related the coupling presented in \cite{bouchot2024confinedrandomwalklocally}.

One can observe that for $d \geq 5$, the ratio $\varsigma_{N,t_N}^{\mathrm{RW}}$ undergoes a phase transition at $t_N$ of order $N^{d-2}$, while the ratio $v_{N,t_N}^{\mathrm{RW}}$ undergoes a phase transition at $t_N$ of order $N^d$. One can then make two observations.

First, we stress the existence of a phase $N^{d-2} \ll t_N \ll N^d$ where the walk has negligible volume but full capacity, the same way as random interlacements.

Second, Theorem \ref{th:ratio-cap-RW-boule} is part of the large collection of results about the capacity of the simple random walk in dimension $d$ that are analogs of results about its volume in dimension $d-2$. One can see for example the remarks below \cite[Corollary 1.5, Prop 1.6]{asselah-cap-Z4} as well as the works \cite{dvoretzkyProblemsRandomWalk1951,demboCapacityRangeRandom2024,gallProprietesIntersectionMarches1986a}.

\paragraph*{Comparing with independent Bernoulli random variables.}

Let us compare Theorem~\ref{th:ratio-cap-entrelac} to the case where the random interlacement $\mathscr{I}(u)$ is replaced by the set $\mathscr{B}(p) \defeq \{x \in \ZZ^d , \omega_x=1\}$, where $(\omega_x)_{x\in \ZZ^d}$ is a collection of i.i.d.\ Bernoulli random variables with parameter $p = 1 - e^{-u} > 0$ with law also denoted by $\PP$.
In the spirit of Remark~\ref{rem:holes}, the question is to understand whether the ``holes'' are more visible for the random interlacement than for the Bernoulli random variables.

Adapting the proof of Theorem~\ref{th:ratio-cap-entrelac} gives the following result.
\begin{proposition}
	Let $d \geq 3$ and let $(p_N)_{N\geq 1}$ be a sequence of positive numbers and write $p_N = 1 - e^{-u_N}$.
	For any compact regular set $D \subset \RR^d$, we have the following convergences in $\PP$-probability:
	\begin{equation}\label{eq:convergence-ber}
		\kappa_{N,p_N} = \kappa_{N,p_N}(D) \defeq \frac{\cpc(\mathscr{B}(p_N) \cap D_N)}{ \cpc(D_N)} \xrightarrow[N \to +\infty]{}
		\begin{dcases}
			0 & \quad \text{ if }\ \lim_{N\to\infty} u_N N^2  = 0 \,, \\
			1 & \quad \text{ if }\ \lim_{N\to\infty} u_N N^2 = +\infty \,.
		\end{dcases} 
	\end{equation} 
	Moreover, if $0<\liminf\limits_{N\to\infty} u_N N^2 \leq  \limsup\limits_{N\to\infty} u_N N^2 <+\infty$, then 
	\begin{equation}
		\label{eq:criticalcase-ber}
		0<\liminf_{N\to\infty} \EE[\kappa_{N,p_N}]\leq  \limsup_{N\to\infty} \EE[\kappa_{N,p_N}] < 1 \,.
	\end{equation}
\end{proposition}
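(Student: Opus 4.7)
I would follow the blueprint of Theorem~\ref{th:ratio-cap-entrelac}, the main conceptual change being that for independent Bernoulli variables each distinct site visited by the random walk provides an \emph{independent} chance of meeting $\mathscr{B}(p_N)$, so the relevant scale is the \emph{cardinality} of the walk's range inside $D_N$ (of order $N^2$ for walks starting near $\partial D_N$) rather than its \emph{capacity} (of order $\Theta_N$, as in the interlacement case). The convergence to $0$ when $u_N N^2 \to 0$ is the easy direction: by sub-additivity of capacity, $\cpc(\mathscr{B}(p_N) \cap D_N) \leq |\mathscr{B}(p_N) \cap D_N|/g_0$; taking expectations gives $\EE[\cpc(\mathscr{B}(p_N) \cap D_N)] \leq p_N |D_N|/g_0 = O(u_N N^d)$, hence $\EE[\kappa_{N,p_N}] \leq C u_N N^2 \to 0$, whence convergence in probability (and in $L^1$).

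For the converse direction, the starting point is the identity
\begin{equation*}
  \cpc(D_N) - \cpc(A) = \sum_{z \in D_N \setminus A} e_{D_N}(z) \, \Pbf_z(H_A = \infty),
\end{equation*}
with $A = \mathscr{B}(p_N) \cap D_N$ and $e_{D_N}$ the equilibrium measure of $D_N$; it is obtained by decomposing $\Pbf_y(H_A < \infty)$ at the first entry into $D_N$ and letting $y \to \infty$. Integrating out the $\omega$'s against the independent random walk, the event $\{H_A = \infty\}$ forces $\omega_y = 0$ for every $y$ in the forward range $\mathcal{R}^* := \{S_1, S_2, \dots\}$ met inside $D_N$, which yields
\begin{equation*}
  \EE[\cpc(D_N) - \cpc(A)] = (1-p_N) \sum_{z \in D_N} e_{D_N}(z) \, \Ebf_z\!\left[(1-p_N)^{|\mathcal{R}^* \cap D_N \setminus \{z\}|}\right].
\end{equation*}
A Green-function computation yields $\Ebf_z[|\mathcal{R}^* \cap D_N|] \asymp N^2$ uniformly over $z$ in (the bulk of) $\mathrm{supp}(e_{D_N})$, and coupled with a concentration estimate on this range, one gets $\Ebf_z[(1-p_N)^{|\mathcal{R}^* \cap D_N|}] \leq e^{-c p_N N^2} + o(1) \to 0$ when $u_N N^2 \to \infty$. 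Summing over $z$ produces $\EE[\cpc(D_N) - \cpc(A)] = o(\cpc(D_N))$, so $\EE[\kappa_{N,p_N}] \to 1$, and since $\kappa_{N,p_N} \leq 1$ this gives the claimed convergence in probability.

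In the critical regime $u_N N^2 \asymp 1$, the same identity furnishes matching two-sided bounds: the bound $(1-p_N)^{|\mathcal{R}^*|} \leq 1$ combined with a lower-tail estimate on the range gives $\EE[\cpc(D_N) - \cpc(A)] \leq (1-\varepsilon)\cpc(D_N)$, whereas $(1-p_N)^{|\mathcal{R}^*|} \geq e^{-C u_N N^2}$ on a set of probability bounded away from $0$ (via an upper-tail estimate on the range) yields $\EE[\cpc(D_N) - \cpc(A)] \geq \varepsilon \cpc(D_N)$, establishing~\eqref{eq:criticalcase-ber}. The main obstacle I anticipate is the uniform two-sided control of $|\mathcal{R}^* \cap D_N|$ for $z$ close to $\partial D_N$: from such points the walk exits $D_N$ quickly but re-enters many times, so the range accumulates over several excursions, and obtaining the required concentration demands careful second-moment Green-function estimates.
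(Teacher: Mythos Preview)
Your proposal is correct and follows essentially the paper's intended adaptation: the paper only says ``Adapting the proof of Theorem~\ref{th:ratio-cap-entrelac} gives the following result,'' and that adaptation is precisely to replace the exponential weight $e^{-u_N \cpc(\mathcal{R}_\infty \cap D_N)}$ in~\eqref{eq:rapport-cap-exp} by $(1-p_N)^{|\mathcal{R}_\infty \cap D_N|} = e^{-u_N |\mathcal{R}_\infty \cap D_N|}$, so that the analogue of Proposition~\ref{lem:deviation-cap-infty} now concerns $|\mathcal{R}_\infty \cap D_N|$ (of order $N^2$) instead of its capacity (of order $\Theta_N$). You identify this substitution correctly.

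Two differences are worth flagging. First, your subadditivity argument for the regime $u_N N^2 \to 0$ is more elementary than the paper's two-sided template: it bypasses the range analysis entirely, and is a genuine shortcut. Second, the obstacle you anticipate---uniform control of $|\mathcal{R}^* \cap D_N|$ for $z$ near $\partial D_N$---is real (indeed $|\mathcal{R}^* \cap D_N|$ does \emph{not} concentrate for a fixed boundary point $z$), but the paper's method sidesteps second-moment Green-function estimates altogether. For the lower tail one only needs to bound $\sum_z \bar e_N(z)\, \Pbf_z(|\mathcal{R}_\infty \cap D_N| < \eta N^2)$ after averaging against $\bar e_N$; this is handled, exactly as in Section~\ref{ssec:control-cap-range-infty-RI}, by first forcing the walk to hit $D_N^\eps$ via the identity $\sum_z \bar e_N(z)\,\Pbf_z(H_{D_N^\eps}=\infty) = 1 - \cpc(D_N^\eps)/\cpc(D_N)$, and then invoking the law of large numbers for $|\mathcal{R}_n|$ (in place of Theorem~\ref{th:cv-cap-range}). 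This is cleaner than what you propose and makes the ``main obstacle'' disappear.
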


We therefore notice that in dimension $d=3$ and $d=4$, the threshold on $(p_N)_{N\geq 1}$ for the ratio $\kappa_{N,p_N}$ to vanish is much smaller than for $\varsigma^{\mathrm{RI}}_{N,p_N}$. This means that the ``holes'' of the random interlacement make the capacity $\cpc(\mathscr{I}(p) \cap D_N)$ (much) smaller than $\cpc(\mathscr{B}(p) \cap D_N)$ in dimension $d=3,4$. 
The threshold is the same in dimension $d\geq 5$, but an interesting question would be to determine whether in general we have $\EE[\varsigma^{\mathrm{RI}}_{N,u_N}]\leq \EE[\kappa_{N,p_N}]$, or if $\EE[\varsigma^{\mathrm{RI}}_{N,u_N}]$ is much smaller than $\EE[\kappa_{N,p_N}]$ in the regime $u_N N^2 \to 0$.

\subsection{A motivation from the folding of random walks}

One of our main motivations is the study of folding phenomena for random walks; this arises for instance when considering a random walk among obstacles.
More precisely, consider a random walk $(S_n)_{n\geq 0}$ on $\ZZ^d$, which evolves in a (random) set of obstacles $\mathscr{O}$, with law denoted $\PP$. The random walk is killed when hitting $\mathscr{O}$, so the survival time is $\tau\defeq \min\{i\geq 0, S_i \in \mathscr{O}\}$.
One can then study the probability of surviving for a long time and the properties of the random walk when conditioned on surviving for a long time: one may either consider a \emph{quenched} setting, that is fixing a realization of $\mathscr{O}$, or an \emph{annealed} setting, that is averaging over realizations of $\mathscr{O}$.
The quenched survival probability is $\mathbf{P}(\tau>n) =\mathbf{P}( \mathscr{O} \cap \mathcal{R}_n =\varnothing)$, where  $\mathcal{R}_n= \{S_0,\ldots, S_n\}$ is the range of the simple random walk up to time $n$; the annealed probability is $\PP\mathbf{P}(\mathscr{O} \cap \mathcal{R}_n =\varnothing)$.

There are (at least) two natural sets of obstacles that one could consider: $\mathscr{O}= \mathscr{B}(p)$, a set of independent Bernoulli obstacles of parameter $p \defeq 1-e^{-u}$, or  $\mathscr{O}= \mathscr{I}(u)$, a random interlacment of parameter $u > 0$.
Then, the annealed probabilities of survival are respectively
\begin{equation}
	\label{def:penalized}
	\EE \big[\mathbf{P}(\mathscr{B}(p) \cap \mathcal{R}_n = \varnothing) \big]=
	\mathbf{E}\Big[ e^{- u |\mathcal{R}_n|} \Big]\,,
	\qquad
	\EE \big[\mathbf{P}(\mathscr{I}(u) \cap \mathcal{R}_n =\varnothing) \big]
	=\mathbf{E}\Big[ e^{- u \cpc(\mathcal{R}_n)} \Big]\,,
\end{equation}
using~\eqref{def:interlacement} in the case of interlacements.
Both survival probabilities can also be interpreted as partition functions of a Gibbs measure penalizing a random walk $(S_i)_{0\leq i\leq n}$ by its volume, resp.\ by its capacity.
Studying these annealed probabilities then amounts to obtaining large deviation principles for the volume, resp.\ the capacity, of the range $\mathcal{R}_n$ of a random walk.

\paragraph*{Random walk penalized by its volume.}
The first model of~\eqref{def:penalized}, \textit{i.e.}\ the random walk penalized by the size of its range, is by now well-understood, starting from the seminal work of Donsker and Varadhan~\cite{donskerNumberDistinctSites1979}.
It is now known that the random walk folds into a ball of radius $\rho_n \defeq c_d (n/\beta)^{1/(d+2)}$ and fills it, see~\cite{dingGeometryRandomWalk2020,berestyckiRandomWalkPenalised2021} for precise statements.

Let us briefly present the heuristic for determining the radius $\rho_n$. 
The probability that a random walk remains up to time $n$ in a ball of radius $r$ is approximately $\exp(- \lambda_d n/r^2)$ (with $\lambda_d$ the principal eigenvalue of the Laplacian in the unit ball). On the other hand, the volume of that ball is $\pi_d r^{d}$ (with $\pi_d$ the volume of the unit ball).
Optimizing over the entropic cost $\lambda_d n/r^2$  and the volume penalty $\beta \pi_d r^d$ gives the correct radius $\rho_n \defeq c_d  (n/\beta)^{1/(d+2)}$ of the ball that the random walk effectively folds into.

Let us mention that the literature is mostly concerned with the case of a constant parameter $\beta>0$, but Donsker and Varadhan's argument should also be valid if $\beta=\beta_n \downarrow 0$, as long as $\rho_n \ll n^{1/d}$, \textit{i.e.}\ $\beta_n \gg n^{-2/d}$.
Indeed, a random walk constrained to remain in a ball of radius $\rho_n$ will fill almost completely this ball as long as its volume is much smaller than $n$, that is as long as $\rho_n \ll n^{1/d}$.
The case when $\beta_n$ is of order $n^{1/d}$ actually falls into the large deviation regime studied in \cite{van2001moderate}, known as the \textit{Swiss cheese} picture: the random walk will stay on a scale $n^{1/d}$, leaving large holes in its range.

\paragraph*{Random walk penalized by its capacity.}

As far as the second model in~\eqref{def:penalized} is concerned, \textit{i.e.}\ the random walk penalized by the capacity of its range, we are not aware of any result.
The large deviations results obtained in~\cite{asselah:hal-01832098} for $\cpc(\mathcal{R}_n)$ do not give a sharp large deviation principle and actually do not treat the scale needed to estimate the right-hand side of~\eqref{def:penalized}.

It is however believed that also in that case, the random walk should fold into a ball of radius~$\rho_n$.
One can repeat the heuristic argument from the above section, replacing the volume of the ball of radius $r$ by its capacity $\varpi_d r^{d-2}$ (with $\varpi_d$ the capacity of the unit ball).
Optimizing over the entropic cost $\lambda_d n/r^2$  and the capacity penalty $u \varpi_d r^{d-2}$ gives the correct radius $\rho_n \defeq c'_d  (n/u)^{1/d}$ of the ball that the random walk should effectively fold into.

Let us stress that here, for a fixed $u$, the walk constrained to stay in a ball of radius $n^{1/d}$ should not fill completely that ball, leaving some holes; however its capacity should still be very close to the capacity of the ball.
In the same spirit as in the previous paragraph, the heuristic argument should hold as long as the capacity of a random walk constrained to stay in a ball of radius $\rho_n$ is equivalent to the capacity of the ball. According to Theorem \ref{th:ratio-cap-RW-boule} this should hold as long as $\rho_n \ll \varrho_n$ with
\begin{equation}
		\varrho_n \defeq
	\begin{cases}
		N & \text{ if } d = 3 \,, \\
		(n/\log n)^{1/2} & \text{ if } d = 4 \,, \\
		n^{1/(d-2)} & \text{ if } d \geq 5 \,.
	\end{cases} 
\end{equation}

The argument breaks down when $\rho_n$ is of order $\varrho_n$, \textit{i.e.}\ when holes appears in the range of the random walk (from the point of view of the capacity, see Remark~\ref{rem:holes}).
The corresponding large deviation results for $\cpc \big(\mathcal{R}_n \big)$ falls into the scope of the work (in progress) \cite{BBH-capacity}, which describes an \textit{Italian spaghetti} picture: in dimension $d\geq 5$, the random walk should stay on a scale $n^{1/(d-2)}$, leaving ``holes for the capacity''.

\subsection{Some other related works}

The intersection $\mathscr{I}(u) \cap D_N$ of the interlacement with a blow-up of a compact has already been studied in the literature, see~\cite{liLargeDeviationsOccupation2015,sznitmanBulkDeviationsLocal2023}.
Let us mention in particular that,  for a fixed parameter $u>0$, large deviation have been obtained for its volume $|\mathscr{I}(u) \cap D_N|$ and of its capacity $\cpc(\mathscr{I}(u) \cap D_N)$, in particular in order to study probabilities of disconnection events by the random interlacement, see for instance~\cite{sznitmanDisconnectionRandomWalks2017a}.

As far as the volume $|\mathscr{I}(u) \cap D_N|$ is concerned, \cite{sznitmanBulkDeviationsLocal2023} obtains the following large deviation result for the covering ratio
\[
\nu_{N,u} \defeq \frac{|\mathscr{I}(u) \cap D_N|}{|D_N|} \,.
\]
Recall that $\vartheta(u)\defeq 1 - e^{-u/g_0}$ is defined in~\eqref{eq:def-density}. Then for any $1 > \nu > \vartheta(u)$, \cite[Theorem~6.1]{sznitmanBulkDeviationsLocal2023} obtains
\[ 
\lim_{N \to +\infty} \frac{1}{N^{d-2}} \proba{\nu_{N,u} > \nu} = - \inf \mathset{\frac{1}{2d} \int_{\RR^d} |\nabla \varphi|^2 \, : \, \varphi \in \mathscr{C}_0^\infty(\RR^d), \int_D \vartheta\big((\sqrt{u} + \varphi(z))^2 \big) \, \dd z > \nu |D|} \, .  
\]

For the capacity  $\cpc(\mathscr{I}(u) \cap D_N)$, \cite{liLargeDeviationsIntersection2024} obtained a large deviation principle for the ratio $\varsigma^{\mathrm{RI}}_{N,u}$ defined in~\eqref{def:ratio}, in the case where $D$ is the unit ball of $\RR^d$: for any $u > 0$ and $\lambda\in (0,1)$,
\[
\lim_{N \to +\infty} \frac{1}{N^{d-2}} \proba{ \varsigma^{\mathrm{RI}}_{N,u} < \lambda} = -\frac{u}{d} \inf_{A \in \Sigma_{\lambda}} \capR{D\setminus A} \, , 
\]
where $\Sigma_{\lambda} \defeq \{ A \subset D \, : \, \capR{A} \leq \lambda \capR{D} , \text{ $A$ is a finite union of boxes} \}$.
In particular, it shows that for a fixed $u>0$, the ratio of the capacities $\varsigma^{\mathrm{RI}}_{N,u}$ goes to $1$ in $\PP$-probability. In the present paper (that is Theorem \ref{th:ratio-cap-entrelac}) we tackle a somehow different question, asking how small $u$ can be to still have a ratio of order $1$.

\section{Random interlacements: Proof of Theorem~\ref{th:ratio-cap-entrelac}}\label{sec:entrelac}

\subsection{Preliminaries on capacities}

Let us start by rewriting the ratio $\varsigma_{n,u}$ defined in~\eqref{def:ratio}.
Recalling the definition~\eqref{def:capacity}, we define for $K\Subset \ZZ^d$ the equilibrium measure of $K$ as 
\[
e_K(x) \defeq \indic{x \in K} \mathbf{P}_x (H_K = +\infty) \,,
\]
so in particular $\cpc(K) = e_K(\ZZ^d) = e_K(\partial K)$.
Define also the normalized equilibrium measure of~$K$, or harmonic measure with respect to $K$, as $\bar e_K \defeq \frac{1}{\cpc(K)} e_K$, which is supported on the (interior) boundary of $K$.

If we denote $g(x,y) \defeq \sum_{k = 0}^{+\infty} \probaRW{x}{S_k = y}$ the Green function of the random walk, the equilibrium measure (and the capacity) is linked to $g(x,y)$ through the last exit decomposition (see e.g.~\cite[Lem.~2.12]{drewitzIntroductionRandomInterlacements2014}):
\begin{equation}
	\label{eq:decomp-derniere-sortie}
	\probaRW{z}{H_K < +\infty} = \sum_{y \in K} g(z,x) e_K(x)  \, .
\end{equation}
In turn, we also have another characterization of the capacity of a set $K$, as the properly normalized probability that $K$ is hit by a simple random walk starting from far away.
The harmonic measure $\bar e_K$ can then be interpreted as the distribution of the hitting point $S_{H_K}$ for a simple random walk starting from far away, conditioned to hit $K$. More precisely, we have
\begin{equation}
	\label{defbis:capacity}
	\cpc(K) = \lim_{|z|\to\infty} \frac{1}{g(0,z)} \bP_{z}\big( H_K<+\infty \big) \,,
	\quad
	\bar e_K(x) = \lim_{|z|\to\infty} \bP_z\big(S_{H_K}=x \mid H_K<+\infty \big) \,.
\end{equation}
One can find the proof of these results in \cite[Section 6.5]{lawlerRandomWalkModern2010}.

Our starting point for the rest of the proof will be the following formula for the difference of two capacities: if $A \subseteq B \Subset \ZZ^d$ and $\cpc(B) > 0$, then we have
\begin{equation}
	\label{eq:diff-cap-inclusion}
	\capN{B} - \capN{A} = \capN{B} \sum_{x\in B} \bar e_B(x) \bP_x\big( H_A = +\infty \big) \, .
\end{equation}
Indeed, starting from~\eqref{defbis:capacity}, we get that
\[
\begin{split}
	\capN{B} - \capN{A} 
	& = \lim_{|z|\to\infty} \frac{1}{g(0,z)} \big( \bP_{z}(H_B<+\infty) - \bP_z(H_A<+\infty) \big)  \\
	& = \lim_{|z|\to\infty} \frac{1}{g(0,z)}  \bP_{z}(H_B<+\infty)  \bP_z(H_A=+\infty \mid H_B <+\infty) \,,
\end{split}
\]
where we have used that $A\subseteq B$ to get that $\bP_y(H_B<+\infty)-\bP_y(H_A<+\infty) = \bP_y(H_B<+\infty, H_A =+\infty)$.
Decomposing the last probability as
\[
\bP_z(H_A=+\infty \mid H_B <+\infty) = \sum_{x \in B} \bP_z(S_{H_B}=x \mid H_B <+\infty) \bP_x(H_A=+\infty) \,,
\]
we deduce~\eqref{eq:diff-cap-inclusion} from~\eqref{defbis:capacity}.

\subsection{Main argument and key proposition}\label{ssec:ratio-cap-key-argument}

Let us write $\bar e_N = \bar e_{D_N}$ for simplicity, then \eqref{eq:diff-cap-inclusion} yields
\[
1- \varsigma^{\mathrm{RI}}_{N,u_N} = \sum_{x\in D_N} \bar e_{N}(x) \mathbf{P}_x\big( H_{\mathscr{I}(u_N) \cap D_N} = +\infty \big)  = \sum_{x\in D_N} \bar e_{N}(x) \mathbf{P}_x\big( \mathcal{R}_{\infty}\cap \mathscr{I}(u_N) \cap D_N = \varnothing \big) \,,
\]
where $\mathcal{R}_{\infty} = \{S_0,S_1,S_2,\ldots\}$ is the range of the random walk $(S_n)_{n\geq 0}$ in infinite time-horizon.
Since $\varsigma^{\mathrm{RI}}_{N,u_N}\in [0,1]$, in order to prove~\eqref{eq:convergence}, we only need to control
\begin{equation}
	\label{eq:rapport-cap-exp}
	\begin{split}
		\esp{1-\varsigma^{\mathrm{RI}}_{N,u_N}} &= \sum_{x\in D_N} \bar e_{N}(x) \EE\left[ \mathbf{P}_x\big( \mathcal{R}_{\infty}\cap \mathscr{I}(u_N) \cap D_N = \varnothing \big) \right] \\
		&= \sum_{x\in D_N} \bar e_{N}(x)  \espRW{x}{e^{-u_N \capN{\mathcal{R}_\infty \cap D_N}}} \,,
	\end{split}		 
\end{equation}
where we have used the characterization~\eqref{def:interlacement} of the law of the random interlacement.

Then, we use the following key result, which estimates the capacity $\capN{\mathcal{R}_{\infty} \cap D_N}$ when the random walk starts from a point drawn from $\bar e_N$.
Recall the definition~\eqref{eq:def-Theta} of $\Theta_N$.

\begin{proposition}\label{lem:deviation-cap-infty}
	For any $\eta > 0$, there is some $\delta(\eta)$ with $\lim_{\eta\downarrow 0} \delta(\eta) = 0$ such that
	\[ 
	\limsup_{N \to +\infty} \sum_{x\in D_N} \bar e_N(x)\probaRW{x}{\capN{\mathcal{R}_\infty \cap D_N} \not\in [\eta, \tfrac{1}{\eta}] \Theta_N} \leq \delta(\eta) \, .
	\]
\end{proposition}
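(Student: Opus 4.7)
I split the proof of Proposition~\ref{lem:deviation-cap-infty} into an upper tail and a lower tail estimate; the overall strategy is to relate $\cpc(\mathcal{R}_\infty \cap D_N)$ under the starting distribution $\bar e_N$ to $\cpc(\mathcal{R}_{N^2})$ under $\mathbf{P}_0$, the latter being of order $\Theta_N$ by Theorem~\ref{th:cv-cap-range}.

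\textbf{Upper tail.} Writing $\mathbf{P}_{\bar e_N}(\cdot) = \sum_x \bar e_N(x) \mathbf{P}_x(\cdot)$, I aim for $\mathbf{P}_{\bar e_N}(\cpc(\mathcal{R}_\infty \cap D_N) > \Theta_N/\eta) \leq C\eta$; by Markov's inequality this reduces to $\mathbf{E}_{\bar e_N}[\cpc(\mathcal{R}_\infty \cap D_N)] \leq C\Theta_N$. To establish this expectation bound, I decompose the trajectory $(S_n)_{n\geq 0}$ into macro-excursions: fix $C_0 > 0$ such that $D_N \subset B(0, C_0 N)$, and consider the successive visits to $D_N$ separated by excursions through $\partial B(0, C_0 N)$. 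By~\eqref{defbis:capacity}, the probability of returning to $D_N$ from a point on $\partial B(0, C_0 N)$ is of order $\cpc(D_N)/(C_0 N)^{d-2}$, and is bounded above by some $p<1$ uniformly in $N$ if $C_0$ is chosen large enough, so the number of macro-excursions is dominated by a geometric random variable. The range inside $D_N$ produced during each macro-excursion is contained in $\mathcal{R}_\tau$ for an exit time $\tau$ with $\mathbf{E}[\tau] \asymp N^2$, and by Theorem~\ref{th:cv-cap-range}, $\mathbf{E}[\cpc(\mathcal{R}_\tau)] \leq C\Theta_N$. A strong-Markov summation using sub-additivity of $\cpc$ then yields the desired bound.

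\textbf{Lower tail.} The regularity of $D$ gives $x^* \in \mathrm{Int}(D)$ and $\rho>0$ with $B(x^*, 2\rho) \subset D$; for large $N$, $B^* \defeq B(x^* N, \rho N) \subset B^{**} \defeq B(x^* N, 2\rho N) \subset D_N$. The argument rests on three facts:
\begin{itemize}
\item[(a)] $\mathbf{P}_{\bar e_N}(H_{B^*} < \infty) \geq c_1 > 0$ uniformly in $N$: by~\eqref{eq:decomp-derniere-sortie} and $g(x,y) \asymp |x-y|^{-(d-2)}$, for $x \in \partial D_N$ one has $\mathbf{P}_x(H_{B^*} < \infty) \asymp \cpc(B^*)\,N^{-(d-2)} \asymp 1$.
\item[(b)] From any entry point $y \in \partial B^*$, with probability at least $c_3>0$ uniform in $N$ and in $y$, the walk stays in $B^{**}$ for at least $c_2 N^2$ steps (standard heat-kernel estimate, since $\mathrm{dist}(y, \partial B^{**}) \asymp N$).
\item[(c)] On the event that (a) and (b) both occur, $\mathcal{R}_\infty \cap D_N$ contains the range of a SRW of duration $c_2 N^2$, and Theorem~\ref{th:cv-cap-range} gives $\cpc(\mathcal{R}_{c_2 N^2}) \geq \eta \Theta_N$ with probability $\geq 1-\varepsilon(\eta)$, where $\varepsilon(\eta)\to 0$ as $\eta\to 0$.
\end{itemize}
Combining (a)--(c) yields a single-attempt success estimate $\mathbf{P}_{\bar e_N}(\cpc(\mathcal{R}_\infty \cap D_N) \geq \eta\Theta_N) \geq c_1 c_3 - \varepsilon(\eta)$.

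\textbf{Main obstacle.} The upper tail is essentially a first-moment computation. The main difficulty is the lower tail: the naive single-attempt argument above gives only a \emph{constant} lower bound on the success probability, whereas the statement requires this probability to tend to $1$ as $\eta\to 0$. To close the gap I would iterate via the strong Markov property at the successive entries $\sigma^{(k)}$ of the walk to $B^*$, exploiting that after each failed attempt the walk returns to $B^*$ with probability bounded below by a positive constant (via the same Green-function estimate as in (a)). A cleaner substitute is to observe that $\mathcal{R}_\infty \cap B^{**}$ coincides with the range of the induced Markov chain on $B^{**}$, whose lifetime is $\asymp N^2$ with good concentration, and whose bulk transitions agree with those of the SRW on $\mathbb{Z}^d$; transferring the concentration of Theorem~\ref{th:cv-cap-range} to this induced chain then yields the required decay $\delta(\eta)\to 0$.
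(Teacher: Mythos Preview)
Your upper-tail argument is correct and in fact more economical than the paper's: the paper splits on the last-exit time $L_N$ of a ball containing $D_N$, controls $\mathbf{P}_x(L_N>AN^2)$ via Donsker's invariance principle, and optimizes over $A=A(\eta)$, whereas your first-moment bound $\mathbf{E}_{\bar e_N}[\cpc(\mathcal{R}_\infty\cap D_N)]\le C\Theta_N$ (obtained by decomposing into macro-excursions and using $\mathbf{E}[\cpc(\mathcal{R}_{N^2})]\asymp\Theta_N$ on each) plus Markov's inequality gives a contribution $C\eta$ to $\delta(\eta)$ directly.

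The lower tail, however, has a genuine gap that neither of your proposed fixes repairs. The obstruction is the probability of \emph{never entering your target ball at all}: by~\eqref{eq:diff-cap-inclusion},
\[
\mathbf{P}_{\bar e_N}\big(H_{B^*}=\infty\big)\;=\;1-\frac{\cpc(B^*)}{\cpc(D_N)}\,,
\]
and since $B^{**}\subset D_N$ forces $2\rho$ to be at most the inradius of $D$, this quantity stays bounded away from $0$ uniformly in $N$ (for $D$ the unit ball it tends to $1-\rho^{d-2}\ge 1-2^{-(d-2)}$); the same holds with $B^{**}$ in place of $B^*$. On the event $\{H_{B^{**}}=\infty\}$ your iteration over successive entries $\sigma^{(k)}$ never starts and the induced chain on $B^{**}$ has empty range, so no matter how small $\eta$ is, your upper bound on $\mathbf{P}_{\bar e_N}(\cpc(\mathcal{R}_\infty\cap D_N)<\eta\Theta_N)$ cannot drop below this fixed positive constant.

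The paper's remedy is to let the target set shrink with $\eta$: replace the fixed ball $B^*$ by the full $\eps$-interior $D_N^\eps\defeq (N\cdot\{x\in D:d(x,D^c)\ge\eps\})\cap\ZZ^d$. The same identity~\eqref{eq:diff-cap-inclusion} now gives $\mathbf{P}_{\bar e_N}(H_{D_N^\eps}=\infty)=1-\cpc(D_N^\eps)/\cpc(D_N)$, which by Proposition~\ref{prop:capacite-blowup} converges to $1-\capR{D^\eps}/\capR{D}$ and hence to $0$ as $\eps\downarrow 0$. After hitting $D_N^\eps$ the walk carries a ball of radius $\eps N$ inside $D_N$, and your steps (b)--(c), run with duration $\eps^3 N^2$ inside that ball, go through; choosing $\eps=c\,\eta^{1/3}$ makes all error terms vanish simultaneously as $\eta\downarrow 0$.
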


This proposition allows us to conclude the proof of Theorem~\ref{th:cv-cap-range}.
Indeed, we have the following bound:
\[
\espRW{x}{e^{-u_N \capN{\mathcal{R}_\infty \cap D_N}}} \geq e^{-\frac1\eta  u_N \Theta_N}   \mathbf{P}_x\big(\capN{\mathcal{R}_\infty \cap D_N} < \tfrac1\eta \Theta_N\big) \,.
\]
If we let $\ell_1\defeq \limsup_{n\to\infty} u_N \Theta_N$ then thanks to Proposition~\ref{lem:deviation-cap-infty}, 
we get that for any $\eta>0$, 
\[
\liminf_{n\to\infty} \esp{1-\varsigma^{\mathrm{RI}}_{N,u_N}} \geq e^{- \frac1\eta \ell_1} (1- \delta(\eta) ) \,,
\]
or in other words
\[
\limsup_{n\to\infty} \esp{\varsigma^{\mathrm{RI}}_{N,u_N}} \leq 1-e^{- \frac1\eta \ell_1} (1- \delta(\eta) ) \,.
\]
In particular, if $\ell_1 =\lim_{N\to\infty} u_N \Theta_N = 0$, then since $\delta(\eta)$ can be made arbitrarily small, this shows that $\esp{\varsigma^{\mathrm{RI}}_{N,u_N}}$ goes to $0$ as $N\to\infty$, \textit{i.e.}\ the second line in~\eqref{eq:convergence}.
Otherwise, it proves the upper bound in~\eqref{eq:criticalcase-RI}.

For the other bound, we use a similar idea:
\[
\begin{split}
	\espRW{x}{e^{-u_N \capN{\mathcal{R}_\infty \cap D_N}}} &\leq e^{-\eta u_N \Theta_N} \mathbf{P}_x\big(\capN{\mathcal{R}_\infty \cap D_N}  > \eta \Theta_N\big)  +\mathbf{P}_x\big(\capN{\mathcal{R}_\infty \cap D_N} \leq  \eta\Theta_N\big)  \\
	& = 1- \mathbf{P}_x\big(\capN{\mathcal{R}_\infty \cap D_N} >  \eta\Theta_N\big) (1- e^{-\eta u_N \Theta_N}) \,.
\end{split}
\] 
If we let $\ell_2\defeq \liminf_{N\to\infty} u_N \Theta_N$, then thanks to Lemma~\ref{lem:deviation-cap-infty} we get that for any $\eta>0$, 
\[
\limsup_{N\to\infty} \esp{1-\varsigma^{\mathrm{RI}}_{N,u_N}} \leq 1-  (1- \delta(\eta))(1-e^{-\eta \ell_2}) \,,
\] 
or put otherwise
\[
\liminf_{N\to\infty} \esp{\varsigma^{\mathrm{RI}}_{N,u_N}} \geq (1-\delta(\eta))  (1-e^{-\eta \ell_2}) \,.
\]
In particular, if $\ell_2 =\lim_{N\to\infty} u_N \Theta_N = +\infty$, then since $\delta(\eta)$ is arbitrarily small we obtain that $\esp{\varsigma^{\mathrm{RI}}_{N,u_N}}$ goes to $1$, which proves the first line in~\eqref{eq:convergence}.
Otherwise, it proves the lower bound in~\eqref{eq:criticalcase-RI}.
\qed

\subsection{Estimates on \texorpdfstring{$\capN{\mathcal{R}_{\infty}\cap D_N}$}{}: proof of Proposition~\ref{lem:deviation-cap-infty}}\label{ssec:control-cap-range-infty-RI}

Before we prove Proposition~\ref{lem:deviation-cap-infty}, let us collect useful results on the capacity of the range of the simple random walk up to time $n$, that we denote $\mathcal{R}_n\defeq \{S_0,S_1,\ldots, S_n\}$.
In particular, the law of large numbers for $\capN{\mathcal{R}_n}$ has been proved in dimension $d\geq 5$ in~\cite{jainRangeRandomWalk1968} (see also~\cite{asselahCapacityRangeRandom2018} for central limit theorems) and in dimension $d=4$ by~\cite{asselah-cap-Z4}; in dimension $d=3$, the convergence in distribution of the rescaled $\capN{\mathcal{R}_n}$ has been proven in \cite{Chang-cap-dim-3-4}.

\begin{theorem}\label{th:cv-cap-range}
	In dimension $d\geq 4$, we have the following law of large numbers: there is $\alpha_d>0$ such that $\mathbf{P}$-almost surely
	\begin{equation}
		\label{eq:cv-cap-range-4-plus}
		\begin{split}
			\frac{1}{n} \capN{\mathcal{R}_n}& \xrightarrow[n \to +\infty]{} \alpha_d  \quad  \text{ if } d\geq 5 \, , \\
			\frac{\log n}{n} \capN{\mathcal{R}_n}& \xrightarrow[n \to +\infty]{} \frac{\pi^2}{8}  \quad  \text{ if } d=4 \,.
		\end{split}
	\end{equation}

	\noindent
	In dimension $d=3$, we have the following convergence in distribution: 
	\begin{equation}
		\label{eq:cv-cap-range-3}
		\frac{1}{\sqrt{n}} \capN{\mathcal{R}_n} \xrightarrow[t \to +\infty]{(d)} \frac{1}{3\sqrt{3}} \, \capR{W_{[0,1]}} \eqdef \mathscr{C}_W \, ,
	\end{equation}
	where $W_{[0,1]}$ is the trace of a three-dimensional Brownian motion up to time $1$.
\end{theorem}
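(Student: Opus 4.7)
These statements are due to Jain--Pruitt for $d \geq 5$, to Asselah--Schapira--Sousi for $d=4$, and to Chang for $d=3$; the proof strategies differ significantly with dimension, so I sketch them separately.

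\textbf{Dimension $d \geq 5$.} The plan is a law-of-large-numbers via the last-exit representation
\[
\capN{\mathcal{R}_n} = \sum_{k=0}^n \mathbf{1}_{\{S_j \neq S_k,\ \forall j > k\}} \cdot \widetilde{\mathbf{P}}_{S_k}\big( \widetilde H_{\mathcal{R}_n} = +\infty \big) \,,
\]
where the inner probability refers to an independent walk $\widetilde S$. In $d \geq 5$ the two-sided intersection of independent walks is a.s.\ finite, so the inner probability is, up to negligible error, well-approximated by the product of the escape probabilities from the past and the future ranges seen from $S_k$, and each summand is essentially stationary in $k$. A Fekete subadditivity argument, applied to $\mathbf{E}[\capN{\mathcal{R}_n}]$ via $\capN{A \cup B} \leq \capN{A} + \capN{B}$, yields the existence of the limit $\alpha_d \defeq \lim_n n^{-1} \mathbf{E}[\capN{\mathcal{R}_n}]$; positivity comes from the fact that each summand has a transience-based lower bound. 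A variance estimate of order $o(n^2)$, obtained by exploiting the decoupling between segments of the walk separated by more than $O(\sqrt{n})$ steps, combined with Borel--Cantelli along $n_k = 2^k$, upgrades this to the almost sure convergence.

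\textbf{Dimension $d = 4$.} The same last-exit scheme applies, but the intersection of two independent four-dimensional walks of length $n$ has expected size $\asymp \log n$; hence the escape probability at a typical point of $\mathcal{R}_n$ is of order $1/\log n$, accounting for the prefactor $\log n/n$. The constant $\pi^2/8$ is then read off from a careful computation of the non-intersection probability between two walks started at the same point, using renewal structure and a comparison with the heat kernel on the lattice. The main additional obstacle compared to $d\geq 5$ is establishing concentration despite the logarithmic corrections; this is handled by a refined second-moment argument along a mesoscopic coarse-graining scheme, showing that ``good'' points (where the local escape probability is close to its mean) form a set whose cardinality concentrates.

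\textbf{Dimension $d = 3$.} Here the plan is an invariance principle. By Donsker's theorem, $(n^{-1/2} S_{\lfloor nt \rfloor})_{t\in [0,1]} \Rightarrow (W_t)_{t\in [0,1]}$. Since the Green function satisfies $g(0,x) \sim c_3 |x|^{-1}$ with $c_3 = 3/(2\pi) \cdot (3/(2\pi))^{1/2}$ (whence the factor $1/(3\sqrt{3})$), one has the scaling relation $\capN{A} \approx \frac{1}{3\sqrt{3}} \sqrt{n} \cdot \capR{n^{-1/2} A}$ for subsets $A$ at scale $\sqrt{n}$. Combining the two would conclude, provided one can pass the capacity functional through the limit, i.e.\ show that $\omega \mapsto \capR{\omega([0,1])}$ is continuous at Brownian paths in the uniform topology. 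This is the core obstacle: the Newtonian capacity is only upper semicontinuous under Hausdorff convergence, so the lower bound requires an approximation argument, discretizing both the walk and the Brownian motion at a mesoscopic scale $n^\alpha$ with $0 < \alpha < 1/2$ and showing that the capacity of the discretized range approximates both $\capN{\mathcal{R}_n}$ and $\frac{1}{3\sqrt{3}}\sqrt{n}\,\capR{W_{[0,1]}}$ with small error, the first by polylogarithmic estimates on the number of points removed by discretization, the second by hitting-probability estimates for Brownian motion.
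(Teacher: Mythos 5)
The paper does not prove Theorem~\ref{th:cv-cap-range}: it is recalled as a collection of known results, with the $d\geq 5$ case cited from Jain--Orey (1968), the $d=4$ case from Asselah--Schapira--Sousi, and the $d=3$ case from Chang. Your proposal correctly identifies the sources and gives a reasonable high-level sketch of the three distinct proof strategies found in those references (last-exit decomposition plus subadditivity and a second-moment argument for $d\geq 5$; the same template with logarithmic renewal estimates for $d=4$; a Donsker-type invariance principle with a continuity argument for the capacity functional for $d=3$). Since the paper itself offers no proof to compare against, there is no conflict of approach. Two small remarks: the attribution for $d\geq 5$ should be Jain and Orey rather than Jain and Pruitt (Jain--Pruitt concerns the volume of the range); and the derivation of the constant $1/(3\sqrt{3})$ in dimension three is muddled as written — the factor arises from combining the $\frac{1}{d}$ in the discrete-to-continuous capacity conversion (Proposition~\ref{prop:capacite-blowup}) with the Brownian scaling $\capR{d^{-1/2}A} = d^{-(d-2)/2}\capR{A}$ applied to $A = W_{[0,1]}$, not directly from the expression $(3/(2\pi))^{3/2}$ you write for the Green's function prefactor.
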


\subsubsection{Probability that \texorpdfstring{$\capN{\mathcal{R}_{\infty}\cap D_N}$ is large}{}}

We now prove the first part of Proposition~\ref{prop:capacite-blowup}.
First of all, since $D$ is compact, we can find some $r>0$ such that $D_N \leq B_{rN}$, where $B_{rN}$ is a ball of radius $rN$ in $\ZZ^d$; only to simplify notation, let us assume that $r=1$.

Therefore, we have that $\mathcal{R}_{\infty} \cap D_N \subseteq \mathcal{R}_{\infty} \cap B_N$, and we will bound
\[
\sup_{x\in B_N} \mathbf{P}_x \big( \capN{\mathcal{R}_{\infty} \cap B_N} \geq \tfrac{1}{\eta} \Theta_N \big) \,.
\]
By rotational invariance, we only need to teat the case of a generic $x \in B_N$, without worrying about the supremum.
For this, we use a last exit decomposition: letting $L_N \defeq \sup\{i\geq 0, S_i \in B_N\}$ the time of last visit to the ball $B_N$, using that $\mathcal{R}_{\infty} \cap B_N \subseteq \mathcal{R}_{L_N}$, we have
\[
\mathbf{P}_x \big( \capN{\mathcal{R}_{\infty} \cap B_N} \geq \tfrac{1}{\eta} \Theta_N \big) \leq \mathbf{P}_x \big( \capN{\mathcal{R}_{A N^2}} \geq \tfrac{1}{\eta} \Theta_N \big) + \mathbf{P}_x \big( L_N > A N^2\big)\,,
\]
where $A=A(\eta) >1$ is some fixed parameter.

For the first term, by the law of large numbers for the capacity, recalling the definition~\eqref{eq:def-Theta} of $\Theta_N$, we have that:
\begin{itemize}
	\item In dimension $d\geq 4$, if $A = c_d/\eta$ with $c_d > 1/\alpha_d$ if $d\geq 5$ and $c_d>16/\pi^2$ if $d=4$, then 
	\[
	\lim_{N\to\infty} \mathbf{P}_x \big( \capN{\mathcal{R}_{A N^2}} \geq \tfrac{1}{\eta} \Theta_N \big) =0 \,.
	\]
	\item In dimension $d=3$, we have
	\[
	\lim_{N\to\infty} \mathbf{P}_x \big( \capN{\mathcal{R}_{A N^2}} \geq \tfrac{1}{\eta} \Theta_N \big) = \mathbf{P}\big( \mathscr{C}_W \geq  \tfrac{1}{\eta\sqrt{A}}\big) \,.
	\]
\end{itemize}

It remains to control the other term. By Donsker's invariance principle, we have that 
\[
\limsup_{N\to\infty} \mathbf{P}_x \big( L_N > A N^2\big) \leq \mathbf{P}_0\Big( \inf_{t>\sqrt{A}} \|W_t\| \leq \sqrt{d} \Big) \,,
\]
where $(W_t)_{t\geq 0}$ is a standard $d$-dimensional Brownian motion (the factor $\sqrt{d}$ comes from the variance of the simple random walk); $\mathbf{P}_x$ also denotes it law, with starting point $x\in \RR^d$.
In the above, we have also used that the probability $ \mathbf{P}_x( L_N > A N^2)$ is maximized at $x=0$.
Now, if we let $\tau_r\defeq \inf\{ s >0 , \|W_s\|=r\}$ be the hitting time of the sphere of radius $r$, we get that for any $r>1$
\[
\mathbf{P}_0\Big( \inf_{t>\sqrt{A}} \|W_t\| \leq \sqrt{d} \Big)\leq \mathbf{P}_0\big(\tau_r< \sqrt{A}/2 \big) \sup_{|z| = r} \mathbf{P}_{z} \big( \tau_{\sqrt{d}} < +\infty \big) + \mathbf{P}_0 \big(\tau_r> \sqrt{A}/2 \big) \,.
\]
Now, by standard results on Brownian motions, for $|z| = r$ we have that $\mathbf{P}_{z} \big( \tau_1 < +\infty \big) = (r/\sqrt{d})^{2-d}$ in dimension $d\geq 3$.
Using that the hitting time of $1$ is larger for the one-dimensional Brownian, we get thanks to the reflection principle that
\[
\mathbf{P}_0\big(\tau_r \geq  \sqrt{A}/2 \big) 
= \mathbf{P}_0\big(\tau_1 \geq  \sqrt{A}/(2\sqrt{r}) \big) \leq  2 \mathbf{P}\big( Z \geq \sqrt{A}/(2\sqrt{r})  \big) \,,
\]
where $Z\sim \mathcal{N}(0,1)$.
All together, we obtain that
\[
\mathbf{P}_0\Big( \inf_{t>\sqrt{A}} \|W_t\| \leq \sqrt{d} \Big) \leq  \inf_{r>1} \Big\{ (r/\sqrt{d})^{2-d} + 2 e^{-\frac{A}{r}} \Big\} \leq C (A / \log A)^{2-d} \,.
\]

We have therefore shown the following.
In dimension $d\geq 4$, choosing $A =c_d/\eta$ with a sufficiently large constant $c_d$,
\[
\limsup_{N\to\infty} \sup_{x\in B_N} \mathbf{P}_x \big( \capN{\mathcal{R}_{\infty} \cap B_N} \geq \tfrac{1}{\eta} \Theta_N \big) \leq 
C' \big( \eta \log \tfrac1\eta \big)^{d-2} \,.
\]
In dimension $d=3$, using that $\mathbf{P}(\mathscr{C}_W > \frac{1}{\eta\sqrt{A}}) \leq C A \eta^2$ since $\mathscr{C}_W$ has a finite second moment (see~\cite[Rem.~4.1]{Chang-cap-dim-3-4}), we get that
\[
\limsup_{N\to\infty} \sup_{x\in B_N} \mathbf{P}_x \big( \capN{\mathcal{R}_{\infty} \cap B_N} \geq \tfrac{1}{\eta} \Theta_N \big) \leq C  \inf_{A >0} \Big\{ A \eta^2 + A^{-1}\log A  \Big\} \leq C' \eta \log \tfrac{1}{\eta} \, ,
\]
the optimal being $A \approx \tfrac{1}{\eta} |\log \eta|^{1/2}$, thus concluding the proof of the first part of Proposition~\ref{lem:deviation-cap-infty}.

\subsubsection{Probability that \texorpdfstring{$\capN{\mathcal{R}_{\infty}\cap D_N}$}{} is small}

For $\eps>0$, let us introduce $D^{\eps} \defeq \{x \in D, d(x,D^c) \geq \eps\}$ and $D_N^{\eps} \defeq (N \cdot D^{\eps}) \cap \ZZ^d$.
Now, if we write $H_{N}^{\eps} \defeq H_{D_N^{\eps}} = \min\{i \, : \, S_i \in D_N^{\eps}\}$, we have that 
\[
\mathbf{P}_x \big(  \capN{\mathcal{R}_{\infty} \cap D_N} \leq \eta \Theta_N \big)
\leq \mathbf{P}_x\big( H_{N}^{\eps} = +\infty\big) + \mathbf{E}_x \Big[  \mathbf{P}_{S_{H_{N}^{\eps}}} \big(  \capN{\mathcal{R}_{\infty} \cap D_N} \leq \eta \Theta_N\big)  \indic{H_{N}^{\eps} <+\infty} \Big]\,.
\]
Now, since the ball centered at $S_{H_{N}^{\eps}}$ and of radius $\eps N$ is included in $D_N$, we get that 
\[
\begin{split}
	\mathbf{P}_{S_{H_{N}^{\eps}}} \big(  \capN{\mathcal{R}_{\infty} \cap D_N} \leq \eta \Theta_N\big) & \leq  \mathbf{P}_0 \big( \capN{\mathcal{R}_{\tau_{\eps N}}} \leq \eta \Theta_N\big) \\
	&\leq \mathbf{P}_0 \big( \capN{\mathcal{R}_{\eps^3 N^2}} \leq   \eta \Theta_N\big)  +  \mathbf{P}_0(\tau_{\eps N} \leq  \eps^3 N)\,,
\end{split}
\]
where $\tau_{r} \defeq \min\{i\geq 0, \|S_i\|\geq r\}$ is the exit time of the ball of radius $r$.

Now, by Donsker's invariance principle, we have that $\mathbf{P}_0(\tau_{r}\leq \eps r^2) \to \mathbf{P}_0( \sup_{s\in [0,1]} \|W_s\| \geq \sqrt{d/\eps})$ as $r\to\infty$, with $(W_s)_{s\geq 0}$ a standard $d$-dimensional Brownian motion.
By the reflection principle, we get as above that this probability is bounded by $2 e^{- \eps/2d}$.

Now, using Theorem~\ref{th:cv-cap-range} and recalling the definition~\eqref{eq:def-Theta} of $\Theta_N$, we also get that:
\begin{itemize}
	\item In dimension $d\geq 4$, if $\eps^3 = c_d \eta$ with $c_d > 1/\alpha_d$ if $d\geq 5$ and $c_d> 16/\pi^2$ if $d=4$, then 
	\[
	\lim_{N\to\infty} \mathbf{P}_0 \big( \capN{\mathcal{R}_{\eps^3 N^2}} \leq \eta \Theta_N\big) = 0 \, .
	\]
	
	\item In dimension $d=3$, taking $\eps=\eta^{1/3}$, we get that
	\[
	\lim_{N\to\infty} \mathbf{P}_0 \big( \capN{\mathcal{R}_{\eps^3 N^2}} \leq \eta \Theta_N\big) = \mathbf{P}\big(\mathscr{C}_W \leq \eta^{1/2} \big) \,.
	\]
\end{itemize}
All together, we obtain that for any $\eta>0$, letting $\eps \defeq c_d \eta^{1/3}$ for some appropriate constant, 
\[
\limsup_{N\to\infty} \mathbf{E}_x \Big[  \mathbf{P}_{S_{H_{N}^{\eps}}} \big(  \capN{\mathcal{R}_{\infty} \cap D_N} \leq \eta \Theta_N\big)   \indic{H_{N}^{\eps} <+\infty} \Big] \leq 2 e^{- c_d' \eta^{1/3}} + \mathbf{P}\big(\mathscr{C}_W \leq \eta^{1/2} \big) \indic{d=3} \,.
\]
Notice that $\mathbf{P}(\mathscr{C}_W  >0) =1$, so we get that $\mathbf{P}(\mathscr{C}_W \leq \eta^{1/2})$ goes to $0$ as $\eta$ goes to $0$. 

It therefore remains to bound
\[
\sum_{x\in D_N} \bar e_N(x) \mathbf{P}_x\big( H_N^{\eps} =+\infty \big)  = \frac{\capN{D_N} - \capN{D_N^{\eps}}}{\capN{D_N}}\,,
\]
where we have used the identity~\eqref{eq:diff-cap-inclusion}.
Now, we can use that the capacity of the blow-up $D_N = (N \cdot D) \cap \ZZ^d$ is related with the Newtonian capacity of $D$ via the following well-known result (see \cite[Lem. 2.2]{10.1214/aop/1176989003} or \cite[Prop. 2.4]{liLowerBoundDisconnection2014}).
\begin{proposition}\label{prop:capacite-blowup}
	For any compact regular set $D \subset \RR^d$, we have
	\[ \lim_{N \to +\infty} N^{2-d} \capN{D_N} =  \frac1d \capR{D} \, . \]
\end{proposition}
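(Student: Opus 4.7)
The plan is to exploit the characterization~\eqref{defbis:capacity} of the discrete capacity as a properly normalized hitting probability from infinity, and to connect it to its Brownian-motion analogue via Donsker's invariance principle. The Newtonian capacity then enters through the classical asymptotic behaviour of the Brownian hitting probability from far away, and the prefactor $1/d$ will arise from the variance $1/d$ per coordinate of the scaling limit of the simple random walk.

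Concretely, I would fix $y \in \RR^d \setminus \bar D$ with $|y|$ large, set $z_N \defeq \lfloor N y \rfloor$, and proceed in three steps. First, using the Green's function asymptotics $g(0,z) \sim a_d |z|^{2-d}$, with $a_d = d\,\Gamma(d/2-1)/(2\pi^{d/2})$, together with the quantitative refinement $\bP_z(H_K<\infty) = g(0,z)\capN{K}(1+O(\mathrm{diam}(K)/|z|))$ (see~\cite[Ch.~4 and Ch.~6]{lawlerRandomWalkModern2010}) applied to $K=D_N$ and $z=z_N$, I would obtain
\[
N^{2-d}\capN{D_N} = \frac{|y|^{d-2}}{a_d}\,\bP_{z_N}(H_{D_N}<\infty) + O(|y|^{-1})\,,
\]
where the error is uniform in $N$. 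Second, Donsker's invariance principle ensures that $N^{-1}S_{\lfloor N^2\,\cdot\,\rfloor}$ converges in distribution to a Brownian motion $W$ with covariance matrix $d^{-1}I_d$; writing $T_D$ for its hitting time of $D$, the regularity hypothesis on $D$ guarantees that $\partial D$ is Brownian-polar, so that $y \mapsto \bP^W_y(T_D<\infty)$ is continuous on $\bar D^c$, which allows us to pass to the limit $\bP_{z_N}(H_{D_N}<\infty)\to \bP^W_y(T_D<\infty)$ as $N\to\infty$. Third, taking $|y|\to\infty$, one invokes the classical asymptotic $\bP^W_y(T_D<\infty) \sim (a_d/d)\,|y|^{2-d}\capR{D}$ (see~\cite[Ch.~3]{port2012brownian}, where the extra $1/d$ reflects the covariance of $W$). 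Combining the three steps gives $N^{2-d}\capN{D_N} \to d^{-1}\capR{D}$.

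The main technical point is the interchange of the two limits $N\to\infty$ and $|y|\to\infty$: it is precisely the uniform error term of the first step that permits letting $|y|$ tend to infinity slowly after $N$. A second, more conceptual, obstacle is the use of the regularity hypothesis $\capR{D}=\capR{\mathrm{Int}(D)}$ in the Donsker step: without it, the rescaled walk could hit $D_N$ at a rate that does not reflect $\capR{D}$, because the boundary of $D$ might be \emph{irregular} from the Brownian viewpoint (e.g.\ contain thin spikes invisible to the diffusion); regularity rules out this pathology. Alternatively, one may bypass Donsker by working directly with the variational characterization of both capacities, comparing the discrete quadratic form $\sum_{x,y} g(x,y)\mu(x)\mu(y)$ with its continuous Riesz-energy counterpart under rescaling, but the probabilistic route above seems more transparent.
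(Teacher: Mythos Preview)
The paper does not give its own proof of this proposition; it is stated as a well-known result with references to \cite[Lem.~2.2]{10.1214/aop/1176989003} and \cite[Prop.~2.4]{liLowerBoundDisconnection2014}. There is thus no proof in the paper to compare against.

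Your three-step outline (quantitative last-exit formula, invariance principle, Brownian hitting asymptotics) is the standard probabilistic route and the constant $1/d$ is correctly tracked. However, the justification of the Donsker step contains an inaccuracy. The regularity hypothesis $\capR{D}=\capR{\mathrm{Int}(D)}$ does \emph{not} say that $\partial D$ is Brownian-polar: the closed unit ball in $\RR^3$ is regular in this sense, yet its boundary sphere has positive Newtonian capacity. Moreover, continuity of $y\mapsto\bP^W_y(T_D<\infty)$ on the complement of $D$ is automatic (the function is harmonic there) and is not what is needed to pass to the limit. What is actually required is that hitting $D$ be a continuity set for the law of the limiting path; sandwiching between the $\epsilon$-interior and the closed $\epsilon$-neighbourhood of $D$ and letting $\epsilon\downarrow 0$ gives
\[
\bP^W_y(T_{\mathrm{Int}(D)}<\infty)\;\le\;\liminf_N\,\bP_{z_N}(H_{D_N}<\infty)\;\le\;\limsup_N\,\bP_{z_N}(H_{D_N}<\infty)\;\le\;\bP^W_y(T_D<\infty),
\]
and the regularity hypothesis is precisely the statement that the two extremes coincide. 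With this correction the argument is complete.
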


Therefore, we obtain that choosing $\eps = c_d \eta^{1/3}$ for some appropriate $c_d$,
\[
\limsup_{N\to\infty} \mathbf{P}_x \big(  \capN{\mathcal{R}_{\infty} \cap D_N} \leq \eta \Theta_N \big) \leq \frac{\capN{D} - \capN{D^{\eps}}}{\capN{D}} + 2 e^{- c_d' \eta^{1/3}} + \mathbf{P}\big(\mathscr{C}_W \leq \eta^{1/2} \big) \indic{d=3} \,.
\]
Since $\lim_{\eps\downarrow 0} \capN{D^{\eps}} = \capN{D}$ (see \cite[Prop 1.13]{port2012brownian}), we get that the right-hand side goes to $0$ as $\eta\downarrow 0$, which concludes the proof.
\qed

\section{Constrained random walk: Proof of Theorem \ref{th:ratio-cap-RW-boule}}\label{sec:capacite-CRW}

\subsection{Some preliminaries on confining probabilities}

Consider the transition kernel $P_A$ of the random walk killed on the boundary of a finite connected subset $A \subset \ZZ^d$. We introduce $(\lambda_A, \Phi_A)$ the first $\ell^1$-normalized eigencouple of $P_A$, that is
\begin{equation}
	P_A(x,y) = \frac{1}{2d} \indic{x \sim y; x,y \in A} \, , \quad P_A \Phi_A = \lambda_A \Phi_A \, , \quad \| \Phi_A \| = \sum_{z \in A} \Phi_A(z) = 1 \, .
\end{equation}
For $j \geq 2$ we also write $(\lambda_{A,j}, \Phi_{A,j})$ for the other ordered eigencouples, meaning that $(\lambda_{A,j})_{j \geq 2}$ is non-increasing and $1 > \lambda_A \geq \lambda_{A,2}$. Note that \cite[Lemma A.1]{dingDistributionRandomWalk2021a} proves the following bound on the maximal value of $\Phi_A$:
\begin{equation}\label{statement:borne-vect-propre}
	\text{There is a universal constant $\kappa > 0$ such that} \quad |\Phi_A|_\infty \leq \kappa (1 - \lambda_A)^{d/2}.
\end{equation}

When $A = B_N$ the discrete Euclidean ball of radius $N$, we will simply write $\lambda_N, \Phi_N$. Let us mention that it is known (see \textit{e.g.} \cite[(3.27),(6.11)]{weinberger1958lower}) that $\lambda_N$ satisfies
\begin{equation}\label{eq:encadrement-lambda}
	\lambda_N = 1 - \frac{\lambda}{2d} \frac{1}{N^2} \big( 1 + \grdO(\tfrac{1}{N}) \big) \, , 
\end{equation}
where $\lambda$ is the first eigenvalue of the Laplace-Beltrami operator on the unit ball of $\RR^d$.

In the proof, we will use the fact that the first eigencouple of the transition matrix restricted to $A$ is the main contributor to the probability for the random walk to stay inside $A$ for a long time. We will use the following result, which is can be proven similarly to Lemma 3.10 in \cite{dingDistributionRandomWalk2021a}.

\begin{lemma}\label{lem:eigentrucs}
	Let $D \subset \RR^d$ be a connected open set with smooth boundary, and write $D_N = (N \cdot D) \cap \ZZ^d$ for $N \geq 1$ large enough. Then, if $T \geq c N^2 \log N$ with $c$ large enough, for all $x \in D_N$ we have
	\begin{equation}
		\Pbf_x (\mathcal{R}_T \subseteq D_N) = (\lambda_{D_N})^{T} \frac{\Phi_{D_N}(x)}{\| \Phi_{D_N}^2 \|}(1 + \grdO(N^{-2})) \, .
	\end{equation}
	with $\grdO(N^{-2})$ uniform in $x \in D_N$ and $T \geq c N^2 \log N$.
\end{lemma}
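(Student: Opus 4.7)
The plan is to adapt the spectral-theoretic argument behind~\cite[Lemma~3.10]{dingDistributionRandomWalk2021a}. Since the simple random walk is reversible with respect to counting measure, the substochastic kernel $P_{D_N}$ is symmetric on $\ell^2(D_N)$ and admits a real $\ell^2$-orthonormal basis of eigenvectors $(\phi_{D_N,j})_{j\geq 1}$ with real eigenvalues $1 > \lambda_{D_N} = \lambda_{D_N,1} > \lambda_{D_N,2} \geq \cdots$; the top eigenvector is the $\ell^2$-normalization of $\Phi_{D_N}$, namely $\phi_{D_N,1} = \Phi_{D_N}/\sqrt{\|\Phi_{D_N}^2\|}$. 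Decomposing the transition kernel in this basis gives
\begin{equation*}
\Pbf_x(\mathcal{R}_T \subseteq D_N) \;=\; \sum_{y \in D_N} P_{D_N}^T(x,y) \;=\; \sum_{j} \lambda_{D_N,j}^T\, \phi_{D_N,j}(x)\, \langle \phi_{D_N,j}, \mathbf{1}_{D_N}\rangle\,,
\end{equation*}
and the $j=1$ contribution rewrites exactly as $\lambda_{D_N}^T\, \Phi_{D_N}(x)/\|\Phi_{D_N}^2\|$ using the $\ell^1$-normalization $\sum_y \Phi_{D_N}(y) = 1$. The task is then to show that the remainder $R_T(x) \defeq \sum_{j \geq 2} \lambda_{D_N,j}^T \phi_{D_N,j}(x)\langle \phi_{D_N,j}, \mathbf{1}_{D_N}\rangle$ is of order $\grdO(N^{-2})$ times this main term, uniformly in $x \in D_N$.

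For the ``bulk'' of the remainder, consisting of indices $j$ with $|\lambda_{D_N,j}| \leq \lambda_{D_N,2}$, Cauchy--Schwarz together with Parseval's identities $\sum_j \phi_{D_N,j}(x)^2 = 1$ and $\sum_j \langle \phi_{D_N,j}, \mathbf{1}_{D_N}\rangle^2 = |D_N|$ yields the additive bound $\lambda_{D_N,2}^T \sqrt{|D_N|}$. The continuum Dirichlet spectral gap on $D$ transfers to the discrete setting via the same asymptotics as in \eqref{eq:encadrement-lambda} applied to the second eigenvalue, producing $\lambda_{D_N,2}/\lambda_{D_N} \leq 1 - c/N^2$ for some $c(D)>0$. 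Hence $(\lambda_{D_N,2}/\lambda_{D_N})^T$ is $\grdO(N^{-M})$ for any prescribed $M$ as soon as $T \geq c N^2 \log N$ with $c$ large enough. A crude lower bound $\Phi_{D_N}(x)/\|\Phi_{D_N}^2\| \geq C^{-1} N^{-1}$ for all $x \in D_N$ (using that $\Phi_{D_N}(y) \approx N^{-d}\varphi(y/N)$ with $\varphi$ the smooth continuum Dirichlet eigenfunction vanishing linearly on the regular $\partial D$, while $\|\Phi_{D_N}^2\| \leq C N^{-d}$) then upgrades the additive estimate into the required multiplicative $\grdO(N^{-2})$ error.

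The main obstacle is that $D_N \subset \ZZ^d$ is a bipartite graph, so the spectrum of $P_{D_N}$ is symmetric around $0$; in particular $-\lambda_{D_N}$ is also an eigenvalue, with eigenvector $(-1)^{|y|}\phi_{D_N,1}(y)$. For this single ``mirror'' term the Cauchy--Schwarz bound above is useless, and it must be treated by hand. Its exact contribution to $R_T(x)$ equals
\begin{equation*}
(-\lambda_{D_N})^T \cdot (-1)^{|x|} \, \frac{\Phi_{D_N}(x)}{\|\Phi_{D_N}^2\|} \cdot \sum_{y \in D_N}(-1)^{|y|}\Phi_{D_N}(y)\,,
\end{equation*}
so the whole point is to show that the alternating sum $\sum_y(-1)^{|y|}\Phi_{D_N}(y)$ is very small. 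Inserting $\Phi_{D_N}(y) \approx N^{-d}\varphi(y/N)$ and using either Poisson summation or repeated summations by parts against the highly oscillating character $(-1)^{|\cdot|}$, the smoothness of $\varphi$ inside $D$ together with its vanishing on $\partial D$ makes this sum of order $\grdO(N^{-d})$, which is $o(N^{-2})$ in all dimensions $d \geq 3$. The mirror contribution is therefore also absorbed in the multiplicative $\grdO(N^{-2})$ error, and combining the three estimates (main term, generic error, mirror term) yields the claim uniformly in $x \in D_N$ and $T \geq c N^2 \log N$.
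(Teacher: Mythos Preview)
Your spectral-decomposition approach matches the paper's sketch, and you go further by explicitly identifying the bipartiteness issue (the eigenvalue $-\lambda_{D_N}$) that the sketch passes over in silence. The treatment of the ``bulk'' remainder via Cauchy--Schwarz and the $N^{-2}$ spectral gap is correct.

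The gap is in your handling of the mirror term. The claim that $\sum_{y\in D_N}(-1)^{|y|}\Phi_{D_N}(y)=\grdO(N^{-d})$ via ``repeated summations by parts'' or Poisson summation does not hold: the vanishing of $\varphi$ on $\partial D$ buys exactly \emph{one} boundary-term-free integration by parts, after which $\partial_j\varphi$ has a nonzero normal component on $\partial D$ (Hopf lemma), so further integrations produce nonvanishing boundary terms; the Fourier decay of $\varphi\,\mathbf 1_D$ is only $|\xi|^{-(d+3)/2}$, not $|\xi|^{-d}$. Worse, this computation concerns the continuum profile $N^{-d}\varphi(\cdot/N)$, and the pointwise approximation error to the discrete $\Phi_{D_N}$, summed with alternating signs over $\asymp N^d$ points, would in general dominate. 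A cleaner route stays entirely discrete: writing $\Phi^-_{D_N}(y)=(-1)^{|y|}\Phi_{D_N}(y)$ one has $(I-P_{D_N})\Phi^-_{D_N}=(1+\lambda_{D_N})\Phi^-_{D_N}$, hence
\[
(1+\lambda_{D_N})\sum_{y}(-1)^{|y|}\Phi_{D_N}(y)=\big\langle (I-P_{D_N})\mathbf 1_{D_N},\,\Phi^-_{D_N}\big\rangle\,;
\]
now $(I-P_{D_N})\mathbf 1_{D_N}$ is supported on the inner boundary ($O(N^{d-1})$ points, values $\leq 1$) where $\Phi_{D_N}=O(N^{-d-1})$ by linear vanishing of $\varphi$, yielding the bound $O(N^{-2})$ --- exactly sufficient, but no more. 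Both this step and your uniform lower bound $\Phi_{D_N}(x)/\|\Phi_{D_N}^2\|\geq cN^{-1}$ for \emph{all} $x\in D_N$ rely on quantitative control of the discrete principal eigenfunction at the boundary, which is a nontrivial input that should be cited or proved rather than asserted.
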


We only give a sketch of the proof, as we will not use this exact result: we will restrict ourselves to the case where $D_N$ is the ball $B_N$.

\begin{proof}[Sketch of proof]
	The eigenvalues of the random walk absorbed on $\partial D_N$ satisfy an expansion similar to \eqref{eq:encadrement-lambda}. This implies a spectral gap of order $N^{-2}$. By writing the eigen-decomposition of the transition matrix, we use the spectral gap to show that the main contribution to the probability comes from $(\lambda_{D_N},\Phi_{D_N})$. Note that the assumption $t_N \gg N^2 \log N$ is used to absorb the secondary terms in the eigen-decomposition.
\end{proof}

In the case where the domain is the Euclidean ball, Lemma \ref{lem:eigentrucs} can be extended to all $T \geq N^2$ at the cost of a constant.

\begin{lemma}[{\cite[Corollary 6.9.6]{lawlerRandomWalkModern2010}}]\label{lem:rester-ds-boule}
	There exist a universal constant $c > 0$ such that for all $N$ large enough and all $T \geq N^2$,
	\begin{equation}
		\frac{1}{c} \lambda_N^{T} \leq \sup_{z \in B_N} \Pbf_{z}(\mathcal{R}_{T} \subseteq B_N) = \Pbf_{0}(\mathcal{R}_{T} \subseteq B_N) \leq c \lambda_N^{T} \, .
	\end{equation}
\end{lemma}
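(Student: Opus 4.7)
Write $f(T) := \Pbf_0(\mathcal{R}_T \subseteq B_N)$. The plan is first to identify $\sup_{z \in B_N} \Pbf_z(\mathcal{R}_T \subseteq B_N)$ with $f(T)$, and then to estimate $f(T)$ from both sides through spectral analysis of the sub-Markov kernel $P_{B_N}$ on $\ell^2(B_N)$.

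The equality $\sup_{z \in B_N} \Pbf_z(\mathcal{R}_T \subseteq B_N) = f(T)$ is a symmetry/rearrangement fact: the map $z \mapsto \Pbf_z(\mathcal{R}_T \subseteq B_N)$ is rotationally invariant (because $B_N$ is) and radially non-increasing, as one can check either by a coupling based on reflections across the hyperplane bisecting $[0,z]$, or by a discrete Schwarz rearrangement argument.

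For the lower bound, the plan is to combine sub-multiplicativity with Fekete's lemma. The Markov property together with the above identification yields $f(T+S) \leq f(T)\, f(S)$, so $-\log f$ is super-additive. Fekete's lemma then gives $\lim_{T\to\infty} f(T)^{1/T} = \inf_{T\geq 1} f(T)^{1/T}$, and the limit is identified as $\lambda_N$ via spectral theory (or directly via Lemma~\ref{lem:eigentrucs} applied with $D$ the open unit ball of $\RR^d$). Hence $f(T) \geq \lambda_N^T$ for every $T \geq 1$, which is the lower bound with constant $1$.

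For the upper bound, I would split the range of $T$ in two. When $T \geq c_0 N^2 \log N$ with $c_0$ large enough, Lemma~\ref{lem:eigentrucs} directly yields $f(T) \leq C\, \lambda_N^T\, \Phi_N(0)/\|\Phi_N\|_2^2$, and the prefactor is bounded uniformly in $N$ using \eqref{statement:borne-vect-propre} together with the convergence after rescaling of $\Phi_N$ to the principal Dirichlet eigenfunction of the Laplacian on the unit ball. For the remaining window $T \in [N^2,\, c_0 N^2 \log N]$, I would apply the Markov property at time $T - cN^2$ (for a small fixed $c>0$) to write
\[
	f(T) = \sum_{y \in B_N} P_{B_N}^{T - cN^2}(0,y)\, \Pbf_y(\mathcal{R}_{cN^2} \subseteq B_N),
\]
and use Donsker's invariance principle on the compact time interval $[0,c]$ to bound $\sup_y \Pbf_y(\mathcal{R}_{cN^2} \subseteq B_N) \leq C'\, \lambda_N^{cN^2}$ uniformly in $N$; iterating a bounded number of such steps concatenates with the large-$T$ regime. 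The hard part will be this intermediate window, where sub-multiplicativity gives only one-sided information and the eigen-expansion is not yet sharp (secondary eigenmodes still contribute non-negligibly). Bridging the two regimes is exactly the content of the heat-kernel argument in \cite[Corollary 6.9.6]{lawlerRandomWalkModern2010}.
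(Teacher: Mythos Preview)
The paper does not prove this lemma at all; it is quoted verbatim from Lawler--Limic with only a citation, so there is no in-paper proof to compare against. Your equality $\sup_z \Pbf_z(\mathcal{R}_T\subseteq B_N)=\Pbf_0(\mathcal{R}_T\subseteq B_N)$ and your lower bound via sub-multiplicativity and Fekete's lemma are correct (and the Fekete argument is a nice touch: it gives the lower bound with constant $1$). Your upper bound for $T\geq c_0 N^2\log N$ via Lemma~\ref{lem:eigentrucs} is also fine, once you bound $\Phi_N(0)/\|\Phi_N\|_2^2$ uniformly using \eqref{statement:borne-vect-propre} and $\|\Phi_N\|_2^2\geq \|\Phi_N\|_1^2/|B_N|\geq cN^{-d}$.

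The gap is in the intermediate window $T\in[N^2,\,c_0 N^2\log N]$. Your iteration step yields $f(T)\leq C'\,\lambda_N^{cN^2}f(T-cN^2)$, but to cover the whole window you must iterate it $O(\log N)$ times, not a bounded number, so the accumulated prefactor is $(C')^{O(\log N)}=N^{O(1)}$, which destroys the estimate. A way to fix this is to avoid iteration entirely and use a single $\ell^2$ estimate: write $f(T)=\langle P_{B_N}^{T_1}\delta_0,\,P_{B_N}^{T_2}\mathbf{1}\rangle$ with $T_1+T_2=T$, apply Cauchy--Schwarz, and note that since $\ZZ^d$ is bipartite the spectrum of $P_{B_N}$ is symmetric about $0$, so $\|P_{B_N}\|_{2\to 2}=\lambda_N$. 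Then $\|P_{B_N}^{T_1}\delta_0\|_2^2=P_{B_N}^{2T_1}(0,0)\leq \lambda_N^{2T_1-N^2}P_{B_N}^{N^2}(0,0)\leq C N^{-d}\lambda_N^{2T_1-N^2}$ by the local CLT, while $\|P_{B_N}^{T_2}\mathbf{1}\|_2^2\leq \lambda_N^{2T_2}|B_N|\leq CN^d\lambda_N^{2T_2}$. Multiplying gives $f(T)\leq C\lambda_N^{T-N^2/2}\leq C'\lambda_N^T$ since $\lambda_N^{-N^2/2}$ is bounded by \eqref{eq:encadrement-lambda}. This is essentially the heat-kernel argument you allude to at the end; the point is that it replaces your $O(\log N)$ multiplicative losses by a single one.
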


In the following, we will also study the case where the domain is an annulus, in which case using the local limit theorem yields a similar upper bound. More precisely, for any $K > 0$, there are positive constants $c_K>0$ and $C > 0$ such that, for all $T \geq 1$, for all sequences $a_N > b_N \to +\infty$ such that $\limsup\limits_{N \to +\infty} \tfrac{a_N}{b_N} \leq K < +\infty$, provided $N$ large enough we have
\begin{equation}\label{eq:proba-rester-ds-anneau}
	\sup_{z \in {B_{a_N} \setminus B_{b_N}}} \probaRW{z}{\mathcal{R}_{T} \subseteq B_{a_N} \setminus B_{b_N}} \leq C \exp \Big(- c_K \frac{T}{(a_N - b_N)^2} \Big) \, .
\end{equation}

To prove Theorem \ref{th:ratio-cap-RW-boule}, we will first prove a direct upper bound that goes to zero in the regime $t_N \Theta_N  N^{-d} \to 0$, meaning $t_N$ ``small''. Then, for the lower bound, we use the same strategy as in Section \ref{ssec:ratio-cap-key-argument} by considering the probability for the random walk conditioned to stay in $B_N$ of avoiding an infinite range. Fixing an $\eps > 0$ small enough, we get that with high probability, this infinite range leaves a trace $\mathcal{R}_\infty^\eps$ in $B_N$ of reasonnable size. In the regime $t_N \Theta_N  N^{-d} \to +\infty$, meaning $t_N$ is ``large'', we prove that avoiding $\mathcal{R}_\infty^\eps$ is too costly for the walk under $\Pbf( \cdot \, | \, \mathcal{R}_{t_N} \subseteq B_N)$.

\subsection{Upper bound on the ratio}

In this section, we prove that $\varsigma_{N,t_N}^{\mathrm{RW}} \to 0$ when $t_N N^{-d} \Theta_N \to 0$. The strategy is a direct upper bound on the capacity of the range $\mathcal{R}_{t_N}$ conditioned to stay in the ball. This upper bound relies on the fact that in this regime, time $t_N$ is sufficiently small, and thus the capacity of the walk does not have the time to get big enough.

\begin{proposition}\label{prop:tilt:borne-sup}
	Let $d \geq 3$, there are constants $c_1, c_2 > 0$ such that for all $N$ large enough we have
	\begin{equation}
		\Ebf [\cpc (\mathcal{R}_{t_N}) \, | \, \mathcal{R}_{t_N} \subseteq B_N ] \leq c_1 \frac{t_N}{N^2} \Theta_N + c_2 \Theta_{N \wedge \sqrt{t_N}} \, .
	\end{equation}
\end{proposition}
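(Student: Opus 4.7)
The plan rests on three ingredients: subadditivity of capacity, the Markov property applied at the endpoints of time-blocks, and the one-step confining bounds from Lemma~\ref{lem:rester-ds-boule}. I split into two complementary regimes based on whether $t_N\le N^2$ or $t_N>N^2$; these are responsible, respectively, for the second and first terms on the right-hand side.

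In the regime $t_N\le N^2$ the confining event is essentially free: by a standard maximal inequality, $\Pbf_0(\mathcal{R}_{t_N}\subseteq B_N)\ge 1-C\exp(-cN^2/t_N)$, which stays bounded away from $0$. Hence
\[
\Ebf[\cpc(\mathcal{R}_{t_N})\mid \mathcal{R}_{t_N}\subseteq B_N]\le \frac{\Ebf_0[\cpc(\mathcal{R}_{t_N})]}{\Pbf_0(\mathcal{R}_{t_N}\subseteq B_N)}\le C\,\Theta_{\sqrt{t_N}},
\]
the last inequality coming from Theorem~\ref{th:cv-cap-range}, supplemented in $d=3$ by uniform integrability (the limit $\mathscr{C}_W$ has finite second moment, see~\cite{Chang-cap-dim-3-4}). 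Since $\Theta_{N\wedge\sqrt{t_N}}=\Theta_{\sqrt{t_N}}$ here, this is absorbed in the second term of the statement.

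In the regime $t_N>N^2$, I set $\tau=N^2$ and $K=\lceil t_N/N^2\rceil$, and split $[0,t_N]$ into $K$ successive blocks $I_1,\dots,I_K$ of length at most $\tau$. Subadditivity of capacity gives $\cpc(\mathcal{R}_{t_N})\le \sum_{j=1}^{K}\cpc(\mathcal{R}_{I_j})$, so it suffices to prove the uniform-in-$j$ bound $\Ebf[\cpc(\mathcal{R}_{I_j})\mid \mathcal{R}_{t_N}\subseteq B_N]\le C\Theta_N$. Applying the Markov property at the two endpoints of $I_j=[(j-1)\tau,j\tau]$, factoring the indicator into (past in $B_N$) $\times$ (block in $B_N$) $\times$ (future in $B_N$), using translation invariance of $\cpc$ and dropping the block constraint, one obtains
\[
\Ebf_0[\cpc(\mathcal{R}_{I_j})\mathbf{1}_{\mathcal{R}_{t_N}\subseteq B_N}]\le \Pbf_0(\mathcal{R}_{(j-1)\tau}\subseteq B_N)\,\Ebf_0[\cpc(\mathcal{R}_\tau)]\,\sup_{y\in B_N}\Pbf_y(\mathcal{R}_{t_N-j\tau}\subseteq B_N).
\]
Lemma~\ref{lem:rester-ds-boule} bounds the two confining probabilities by $c\lambda_N^{(j-1)\tau}$ and $c\lambda_N^{t_N-j\tau}$, while providing the matching lower bound $\Pbf_0(\mathcal{R}_{t_N}\subseteq B_N)\ge c'\lambda_N^{t_N}$ on the normalising denominator. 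The powers of $\lambda_N$ telescope up to a residual factor $\lambda_N^{-\tau}=O(1)$ by~\eqref{eq:encadrement-lambda}, and Theorem~\ref{th:cv-cap-range} provides $\Ebf_0[\cpc(\mathcal{R}_{N^2})]\le C\Theta_N$, closing the per-block bound. Summing over the $K\le t_N/N^2+1$ blocks produces $c_1(t_N/N^2)\Theta_N+c_2\Theta_N$, which is the announced inequality since $\Theta_{N\wedge\sqrt{t_N}}=\Theta_N$ in this regime.

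The main obstacle I expect is making the Markov decomposition genuinely uniform in $j$: it is essential that Lemma~\ref{lem:rester-ds-boule} is phrased as a supremum over starting points in $B_N$ (so that neither the past nor the future block depends on the unknown junction points), and that translation invariance lets one replace $\Ebf_x[\cpc(\mathcal{R}_\tau)]$ by $\Ebf_0[\cpc(\mathcal{R}_\tau)]$ regardless of $x\in B_N$. Minor additional care must also be taken with the last (possibly shorter) block, which one handles by the same argument followed by $\cpc(\mathcal{R}_r)\le C\Theta_{\sqrt r}\le C\Theta_N$, absorbed in the constant $c_2\Theta_N$.
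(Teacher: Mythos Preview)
Your proof is correct and follows essentially the same approach as the paper: subadditivity of the capacity to decompose the range into blocks of length $N^2$, the Markov property at block endpoints, translation invariance to reduce to $\Ebf_0[\cpc(\mathcal{R}_{N^2})]\le C\Theta_N$, and Lemma~\ref{lem:rester-ds-boule} to telescope the confining probabilities against the normalising denominator. The only cosmetic difference is that the paper treats the regime $t_N<N^2$ as the remainder term of the same decomposition (with an empty sum), whereas you isolate it via a maximal-inequality lower bound on $\Pbf_0(\mathcal{R}_{t_N}\subseteq B_N)$; both give the $c_2\Theta_{N\wedge\sqrt{t_N}}$ contribution.
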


\noindent Using Proposition \ref{prop:tilt:borne-sup}, since $\capN{B_N} \geq c_d N^{d-2}$ we have
\begin{equation}
	\limsup_{N \to +\infty} \Ebf \left[ \varsigma_{N,t_N}^{\mathrm{RW}}  \, | \, \mathcal{R}_{t_N} \subseteq B_N \right] \leq \limsup_{N \to +\infty} \bigg( c_1 \frac{t_N}{N^d} \Theta_N + c_2' \frac{\Theta_{N \wedge \sqrt{t_N}}}{N^{d-2}} \bigg) \, .
\end{equation}
By the assumption $t_N \Theta_N/ N^d \to 0$, the first term goes to $0$. For the second term, in dimension $d \geq 4$ we use $\Theta_{N \wedge \sqrt{t_N}} \leq \Theta_N \leq N$ which implies $\Theta_{N \wedge \sqrt{t_N}} N^{2-d} \to 0$. For $d = 3$, the assumption $t_N \Theta_N N^{-d} \to 0$ can be rewritten as $t_N N^{-2} \to 0$. In particular we have $\Theta_{N \wedge \sqrt{t_N}} N^{2-d} = \Theta_{\sqrt{t_N}} N^{-1} = \sqrt{t_N} / N \to 0$.
Therefore, in both cases, we get the first part of Theorem \ref{th:ratio-cap-RW-boule} for $t_N \Theta_N N^{-d} \to 0$.

\begin{proof}[Proof of Proposition \ref{prop:tilt:borne-sup}]
	Provided $t_N \geq N^2$, we split the range into bits of length $N^2$ and for $i \in \mathset{1 , \dots , \lfloor t_N N^{-2} \rfloor}$, we write $\mathcal{R}^{(i)} \defeq \big\{ S_{(i-1)N^2 +1} , \dots , S_{iN^2} \big\}$. Writing $\bar{t}_N \defeq \lfloor t_N/N^2 \rfloor N^2$, by subbadditivity of the capacity,
	\begin{equation}\label{eq:decomp-bouts-cap-CRW}
		\Ebf \big[ \cpc (\mathcal{R}_{t_N}) \, | \, \mathcal{R}_{t_N} \subseteq B_N \big] \leq \sum_{i = 1}^{\lfloor t_N/N^2 \rfloor} \Ebf \big[ \cpc \mathcal{R}^{(i)} \, | \,  \mathcal{R}_{t_N} \subseteq B_N \big] + \Ebf \big[ \cpc \mathcal{R}([\bar{t}_N, t_N]) \, | \,  \mathcal{R}_{t_N} \subseteq B_N \big] \, .
	\end{equation}
	Note that if $t_N < N^2$, we can write \eqref{eq:decomp-bouts-cap-CRW} with an empty sum and we are left to bound $\Ebf \big[ \cpc \mathcal{R}([\bar{t}_N, t_N]) \, | \,  \mathcal{R}_{t_N} \subseteq B_N \big]$. Therefore, bounding \eqref{eq:decomp-bouts-cap-CRW} also covers the case where $t_N < N^2$.
	
	We first prove that $\Ebf [ \cpc \mathcal{R}^{(i)} \, | \,  \mathcal{R}_{t_N} \subseteq B_N ] \leq c \Theta_N$ for some $c > 0$ uniform in $i$. To do so, use the Markov property at times $(i-1) N^2$ and $i N^2$:
	\begin{equation}\label{eq:bouts-Markov}
	\begin{split}
		\Ebf \Big[ \cpc \big( \mathcal{R}^{(i)} \big) &\indic{\mathcal{R}_{t_N} \subseteq B_N} \Big] \\ &= \Ebf \Big[ \indic{\mathcal{R}_{iN^2} \subseteq D_N} \Ebf_{S_{iN^2}} \big[\cpc \big( \mathcal{R}_{N^2} \big) \indic{\mathcal{R}_{N^2} \subseteq B_N} \Pbf_{S_{(i+1)N^2}}(\mathcal{R}_{t_N - i N^2} \subseteq B_N) \big] \Big].
	\end{split}
	\end{equation}
	Using Lemma \ref{lem:rester-ds-boule}, we can bound the probability appearing in \eqref{eq:bouts-Markov}: there is a $c > 0$ such that
	\begin{equation}
		\Ebf \Big[ \cpc \big( \mathcal{R}^{(i)} \big) \indic{\mathcal{R}_{t_N} \subseteq B_N} \Big] \leq \Ebf \Big[ \indic{\mathcal{R}_{iN^2} \subseteq B_N} \Ebf_{S_{iN^2}} [\cpc \big( \mathcal{R}_{N^2} \big) ] \Big] c \lambda_N^{t_N - (i+1)N^2} \, ,
	\end{equation}
	where we also bounded the second indicator function by $1$.
	Since $\Pbf$ and $\cpc$ are invariant by translation, $\Ebf_{S_{iN^2}} [\cpc \mathcal{R}_{N^2}] = \Ebf [\cpc \mathcal{R}_{N^2}] \leq c \Theta_N$ by definition of $\Theta_N$ and Theorem \ref{th:cv-cap-range}. Again with Lemma \ref{lem:rester-ds-boule}, we finally get
	\begin{equation}
		\Ebf \Big[ \cpc \big( \mathcal{R}^{(i)} \big) \indic{\mathcal{R}_{t_N} \subseteq B_N} \Big] \leq c \Theta_N \lambda_N^{t_N - (i+1)N^2} \, ,
	\end{equation}
	Therefore, we deduce that
	\begin{equation}\label{eq:cap-bout-de-range}
		\Ebf [ \cpc \big( \mathcal{R}^{(i)} \big) \, | \,  \mathcal{R}_{t_N} \subseteq B_N ] \leq c \frac{\lambda_N^{t_N - N^2} \Theta_N}{\Pbf_0(\mathcal{R}_{t_N} \subseteq B_N)} \leq c' \Theta_N \ ,
	\end{equation}
	where we used that $\lambda_N^{- N^2}$ is bounded from above by a constant for $N$ large enough (recall \eqref{eq:encadrement-lambda}).
	
	Let us now turn to the last term in \eqref{eq:decomp-bouts-cap-CRW}, on which we use the Markov property in similar fashion as in \eqref{eq:bouts-Markov} to get
	\begin{equation}
		\begin{split}
			\Ebf \Big[ \cpc \big( \mathcal{R}([\bar{t}_N, t_N]) \big) \indic{\mathcal{R}_{t_N} \subseteq B_N} \Big] &\leq \Ebf \Big[ \indic{\mathcal{R}_{\bar{t}_N} \subseteq B_N} \Ebf_{S_{\bar{t}_N}} [\cpc \big( \mathcal{R}([\bar{t}_N, t_N]) \big) ] \Big] \\
			&\leq c \Theta_{(t_N - \bar{t}_N)^{1/2}} \Pbf(\mathcal{R}_{\bar{t}_N} \subseteq B_N) \, .
		\end{split}
	\end{equation}
	Lemma \ref{lem:rester-ds-boule} implies that $\Pbf(\mathcal{R}_{\bar{t}_N} \subseteq B_N) \leq c'' \Pbf(\mathcal{R}_{t_N} \subseteq B_N)$ for some $c'' > 0$.
	In particular, we get $\Ebf \Big[ \cpc \big( \mathcal{R}([\bar{t}_N, t_N]) \big) \indic{\mathcal{R}_{t_N} \subseteq B_N} \Big] \leq c'' \Theta_{(t_N - \bar{t}_N)^{1/2}}$.
	We have the immediate bound $\Theta_{(t_N - \bar{t}_N)^{1/2}} \leq \Theta_{N \wedge \sqrt{t_N}}$. Combining with \eqref{eq:cap-bout-de-range} and injecting in \eqref{eq:decomp-bouts-cap-CRW}, we get the announced result.
\end{proof}

\subsection{Lower bound for Theorem \ref{th:ratio-cap-RW-boule}}

In this section we assume $t_N$ to be \og{}large \fg{}, that is $t_N N^{-d} \Theta_N \to +\infty$, and prove that the constrained walk fills the ball $B_N$ in the sense of capacity. Similarly to the proof of Section \ref{ssec:ratio-cap-key-argument}, our starting point is identity \eqref{eq:diff-cap-inclusion} applied to $B = B_N$ and $A$ the range up to time $t_N$ of the confined random walk.

To discriminate between the confined random walk and the infinite range involved in \eqref{eq:diff-cap-inclusion}, we write $\Pbf^1_{x_1}$, $\Pbf^2_{x_2}$ for the laws of two independents random walks on $\ZZ^d$ starting at points $x_1,x_2 \in \ZZ^d$. The corresponding ranges $\mathcal{R}^1, \mathcal{R}^2$ respectively correspond to the (finite) range of the CRW, and the unconstrained (infinite) range.
Applying identity \eqref{eq:diff-cap-inclusion} with this notation yields
\begin{equation}\label{eq:ratio-CRW-decomp-range-infini}
	\Ebf^1_0 [1 - \varsigma_{t_N}^\mathrm{RW} \, | \, \mathcal{R}^1_{t_N} \subseteq B_N] = \sum_{z \in B_N} \bar{e}_{B_N}(z) \Ebf^1_0 [ \Pbf^2_z(\mathcal{R}^2_\infty \cap \mathcal{R}^1_{t_N} = \varnothing) \, | \, \mathcal{R}^1_{t_N} \subseteq B_N] \, .
\end{equation}

In order to obtain a lower bound on $\Ebf^1_0 \big[ \varsigma_{t_N}^\mathrm{RW} \, | \, \mathcal{R}^1_{t_N} \subseteq B_N \big]$, we therefore seek an upper bound on the expectation in the right-hand side of \eqref{eq:ratio-CRW-decomp-range-infini}, that is uniform in $z \in \partial B_N$.

We will use the same strategy as for the random interlacement by changing our point of view: note that
\begin{equation}
	\Ebf^1_0 [ \Pbf^2_z(\mathcal{R}^2_\infty \cap \mathcal{R}^1_{t_N} = \varnothing) \, | \, \mathcal{R}^1_{t_N} \subseteq B_N] = \Ebf^2_z[ \Pbf^1_0 (\mathcal{R}^2_\infty \cap \mathcal{R}^1_{t_N} = \varnothing \, | \, \mathcal{R}^1_{t_N} \subseteq B_N)] \, .
\end{equation}
Therefore, instead of evaluating the probability for the infinite range to avoid the constrained walk, we pick an infinite range and evaluate the probability for the contrained walk to avoid this range.

We start by proving that with high $\Pbf^2_z$-probability, $\mathcal{R}^2_\infty \cap B_N$ has a capacity that is at least of order $\Theta_N$. This will allow us to consider only the ranges $\mathcal{R}^2_\infty$ which are \og{}easy \fg{} to hit for the constrained walk. Afterwards, for such \og{}nice \fg{} configurations of $\mathcal{R}^2_\infty$, we prove it is too costly for the constrained walk to avoid $\mathcal{R}^2_\infty$ for such large time $t_N$.

Let us start by restricting ourselves to these \og{}nice \fg{} configurations of $\mathcal{R}^2_\infty$. For $r \in (0,1)$, we write $B_N^{r} \defeq B(0,r N) = \{x \in B_N, d(x,B_N^c) \geq (1-r) N \}$, and for $\eps > 0$ we define the stopping time $\tau_\eps \defeq H_{B_N^{1-3\eps}} \wedge H_{\partial B_N^{1-\eps}}$. Note that when starting from $z \in \partial B_N^{1-2\eps}$, the time $\tau_\eps$ is the exit time of the annulus $B_N^{1-\eps} \setminus B_N^{1-3\eps}$. Let $\eps, \eta > 0$, we then define the event
\begin{equation}
	\mathcal{A}_{N,2}^{\eps, \eta} \defeq \mathset{H^2_{B^{1-2\eps}_N} < +\infty, \capN{\mathcal{R}^2 \big( [H_{B_N^{1-2\eps}}, H_{B_N^{1-2\eps}} + \tau_\eps] \big)} \geq \eta \Theta_{\eps N}} \, .
\end{equation}
On the event $\mathcal{A}_{N,2}^{\eps, \eta}$, the infinite range $\mathcal{R}^2_\infty$ starting at $\partial B_N$ reaches $B_N^{1-2\eps}$, and its range before exiting the annulus $B_N^{1-\eps} \setminus B_N^{1-3\eps}$ has capacity at least $\eta \Theta_{\eps N}$. On the event $\big\{ H^2_{B^{1-2\eps}_N} < +\infty \big\}$, we write $\mathcal{R}^{2,\eps}_{\infty} \defeq \mathcal{R}^2 \big( [H^2_{B^{1-2\eps}_N}, H^2_{B^{1-2\eps}_N} + \tau_\eps] \big)$ this part of the range.

\begin{lemma}\label{lem:proba-mauvaise-config}
	Let $d \geq 3$ and fix $\eps \in (0,\tfrac14)$. Then, for any $\eta > 0$ sufficiently small, there is a $\delta(\eps, \eta)$ that goes to $0$ as $\eta$ and $\eps$ go to $0$ such that
	\begin{equation}
		\limsup_{N \to +\infty} \sum_{z \in B_N} \bar{e}_{B_N}(z) \Pbf^2_z \big( (\mathcal{A}_{N,2}^{\eps, \eta})^c \big) \leq \delta(\eps, \eta) \, .
	\end{equation}
\end{lemma}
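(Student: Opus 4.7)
The plan is to decompose the complement of $\mathcal{A}_{N,2}^{\eps,\eta}$ into the event that the walk fails to reach $B_N^{1-2\eps}$ and the event that it reaches but generates too small a capacity inside the annulus:
\begin{equation*}
\Pbf^2_z\big( (\mathcal{A}_{N,2}^{\eps, \eta})^c \big)
\leq \Pbf^2_z\big( H^2_{B_N^{1-2\eps}} = +\infty \big)
+ \Pbf^2_z\big( H^2_{B_N^{1-2\eps}} < +\infty,\, \capN{\mathcal{R}^{2,\eps}_\infty} < \eta \Theta_{\eps N} \big) \, .
\end{equation*}
The first (``non-hitting'') contribution is exactly the object treated by identity~\eqref{eq:diff-cap-inclusion}; the second will be controlled by a small-ball argument in the spirit of Section~\ref{ssec:control-cap-range-infty-RI}.

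For the first term I would sum against $\bar e_{B_N}(z)$ and apply \eqref{eq:diff-cap-inclusion} with $A = B_N^{1-2\eps} \subseteq B = B_N$, getting
\begin{equation*}
\sum_{z \in B_N} \bar e_{B_N}(z) \Pbf^2_z\big( H^2_{B_N^{1-2\eps}} = +\infty \big) = 1 - \frac{\capN{B_N^{1-2\eps}}}{\capN{B_N}} \, .
\end{equation*}
By Proposition~\ref{prop:capacite-blowup} together with the scaling $\capR{B_r} = r^{d-2} \capR{B_1}$, this converges to $1 - (1-2\eps)^{d-2}$ as $N \to +\infty$, which vanishes as $\eps \to 0$.

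For the second term, I would apply the strong Markov property at $H^2_{B_N^{1-2\eps}}$: the entry point $x$ lies on $\partial B_N^{1-2\eps}$ (up to an $\grdO(1)$ lattice correction), and conditionally on $x$ the quantity $\capN{\mathcal{R}^{2,\eps}_\infty}$ has the law of $\capN{\mathcal{R}_{\tau_\eps}}$ for a fresh walk started at $x$. The key geometric observation is that for such an $x$ and $N$ large, $B(x,\eps N) \subseteq B_N^{1-\eps} \setminus B_N^{1-3\eps}$, so if $\sigma_{\eps N}$ denotes the exit time of $B(x,\eps N)$, then $\sigma_{\eps N} \leq \tau_\eps$ and
\begin{equation*}
\Pbf^2_x\big(\capN{\mathcal{R}_{\tau_\eps}} < \eta \Theta_{\eps N} \big) \leq \Pbf^2_x\big(\capN{\mathcal{R}_{\sigma_{\eps N}}} < \eta \Theta_{\eps N} \big) \, .
\end{equation*}
From here I would follow verbatim the argument of Section~\ref{ssec:control-cap-range-infty-RI}: for an $\alpha = \alpha(\eta)$ chosen dimension by dimension using Theorem~\ref{th:cv-cap-range} (e.g.\ slightly larger than $\eta / \alpha_d$ for $d \geq 5$, $16 \eta / \pi^2$ for $d=4$, and $\alpha = \eta$ for $d=3$), split
\begin{equation*}
\Pbf^2_x\big(\capN{\mathcal{R}_{\sigma_{\eps N}}} < \eta \Theta_{\eps N} \big)
\leq \Pbf^2_x\big(\capN{\mathcal{R}_{\alpha (\eps N)^2}} < \eta \Theta_{\eps N} \big)
+ \Pbf^2_x\big(\sigma_{\eps N} < \alpha (\eps N)^2 \big) \, .
\end{equation*}
The first probability tends to $0$ (resp.\ to $\mathbf{P}(\mathscr{C}_W \leq \sqrt{\eta})$ in $d=3$) by Theorem~\ref{th:cv-cap-range}, and the second is bounded by Donsker plus the reflection principle, giving a quantity of the order of $e^{-c/\alpha}$. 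Both bounds vanish as $\eta \to 0$.

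Combining the two contributions yields
\begin{equation*}
\limsup_{N \to +\infty} \sum_{z \in B_N} \bar e_{B_N}(z) \Pbf^2_z \big( (\mathcal{A}_{N,2}^{\eps, \eta})^c \big) \leq \big( 1 - (1-2\eps)^{d-2} \big) + \delta(\eta) \eqdef \delta(\eps, \eta) \, ,
\end{equation*}
which tends to $0$ as $\eps, \eta \to 0$, as required. The only real obstacle is bookkeeping: ensuring that after the $\grdO(1)$ lattice correction at the hitting point, the small ball $B(x,\eps N)$ is still comfortably contained in the annulus so that $\sigma_{\eps N} \leq \tau_\eps$ holds without losing a factor; this is handled by taking $N$ sufficiently large (depending on $\eps$).
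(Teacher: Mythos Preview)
Your proposal is correct and follows essentially the same approach as the paper: both decompose $(\mathcal{A}_{N,2}^{\eps,\eta})^c$ into the non-hitting event (handled via \eqref{eq:diff-cap-inclusion}) and the small-capacity event (handled by the strong Markov property at the entry point, followed by a time-splitting that invokes Theorem~\ref{th:cv-cap-range} and a Donsker-type exit-time bound). The only cosmetic difference is that you route the second step through the containment $B(x,\eps N)\subseteq B_N^{1-\eps}\setminus B_N^{1-3\eps}$ and an auxiliary parameter $\alpha(\eta)$, whereas the paper works directly with the annulus exit time $\tau_\eps$ and the fixed time $\eps^3 N^2$; both lead to the same estimate.
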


\begin{proof}
	We first get a bound on the probability for $\mathcal{R}^2_\infty$ to avoid $B^{1-2\eps}_N$: using the identity \eqref{eq:diff-cap-inclusion},
	\begin{equation}\label{eq:Rinfty-penetre-Bn}
		\sum_{z \in B_N} \bar{e}_{B_N}(z) \Pbf^2_z (H^2_{B^{1-2\eps}_N} = +\infty) = 1 - \frac{\cpc \big(B^{1-2\eps}_N \big)}{\cpc(B_N)} \leq c_1 \eps \, ,
	\end{equation}
	for some $c_1 > 0$ that is uniform in $N$. Note that on the event $\mathset{\tau_\eps > \eps^3 N^2}$, we have $\cpc (\mathcal{R}^2_{\tau_\eps}) \geq \cpc (\mathcal{R}^2_{\eps^3 N^2})$. Therefore, for any $w \in \partial B_N^{1-2\eps}$, we have
	\begin{equation}
		\Pbf^2_w \Big( \cpc (\mathcal{R}^2_{\tau_\eps}) \leq \eta \Theta_{\eps N} \Big) \leq \Pbf^2_w \Big( \cpc (\mathcal{R}^2_{\eps^3 N^2}) \leq \eta \Theta_{\eps N} \Big) + \Pbf^2_w (\tau_\eps \leq \eps^3 N^2) \, .
	\end{equation}
	Define $\tilde{A}^\eps = \{ y \in \RR^d \, : \, 1-3\eps \leq |y| \leq 1- \eps \}$ and write $\tilde{\tau}_\eps$ for the exit time of $\tilde{A}^\eps$ for the Brownian motion. With the same arguments as above Proposition \ref{prop:capacite-blowup}, we have the following upper bound:
	\begin{equation}\label{eq:laisser-une-capacite-ok}
		\varlimsup_{N \to +\infty} \sup_{z \in B_N} \Pbf^2_z \Big( \frac{\capN{\mathcal{R}^{2,\eps}_{\infty}}}{\Theta_{\eps N}} \leq \eta, H_{B_N^{1-2\eps}} < +\infty \Big) \leq \kappa_d(\eps \eta) + \sup_{|x| = 1-2\eps} \Pbf_x (\tilde{\tau}_\eps \leq \eps^3) \, ,
	\end{equation}
	with $\kappa_d(\eta) \defeq \bP\big(\mathscr{C}_W \leq \eta^{1/2} \big)$ if $d=3$ (with $\mathscr{C}_W$ as defined in~\eqref{eq:cv-cap-range-3}), and $\kappa_d(\eta) = 0$ if $d \geq 4$. The right-hand side of \eqref{eq:laisser-une-capacite-ok} goes to $0$ as $\eta$ and $\eps$ go to $0$ for the same reasons as in the second part of Section~\ref{ssec:control-cap-range-infty-RI}. Since
	\[ \Pbf^2_z \big( (\mathcal{A}_{N,2}^{\eps, \eta})^c \big) \leq \Pbf^2_z (H^2_{B^{1-2\eps}_N} = +\infty) + \Pbf^2_z \Big(\frac{\capN{\mathcal{R}^2_{\tau_\eps}}}{\Theta_{\eps N}} \leq \eta, H_{B_N^{1-2\eps}} < +\infty \Big)  \, , \]
	assembling \eqref{eq:Rinfty-penetre-Bn} and \eqref{eq:laisser-une-capacite-ok} yields the lemma.
\end{proof}

To prove Theorem \ref{th:ratio-cap-RW-boule} in the case $t_N \Theta_N / N^d \to +\infty$, we will prove the following result.

\begin{proposition}\label{prop:proba-avoid-bon-event}
	For $d \geq 3$, and any $\eps, \eta > 0$, assuming $t_N \Theta_N / N^d \to +\infty$ we have
	\begin{equation}
		\limsup_{N \to +\infty} \sum_{z \in B_N} \bar{e}_{B_N}(z) \Ebf^2_z \Big[ \Pbf^1_0 (\mathcal{R}^2_\infty \cap \mathcal{R}^1_{t_N} = \varnothing \, | \, \mathcal{R}^1_{t_N} \subseteq B_N) \mathbbm{1}_{\mathcal{A}_{N,2}^{\eps, \eta}} \Big] = 0 \, .
	\end{equation}
\end{proposition}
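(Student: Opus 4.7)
The plan is to bound the conditional probability inside the expectation uniformly over realizations of $\mathcal{R}^2_\infty$ in $\mathcal{A}_{N,2}^{\eps,\eta}$ by a quantity that vanishes as $N \to +\infty$; the proposition then follows by dominated convergence on the good event. First I fix such a realization and set $K \defeq \mathcal{R}^{2,\eps}_\infty$, so that $K \subseteq B_N^{1-\eps}\setminus B_N^{1-3\eps}$ and $\cpc(K) \geq \eta \Theta_{\eps N}$. I then rewrite
\[
\Pbf^1_0(\mathcal{R}^2_\infty \cap \mathcal{R}^1_{t_N} = \varnothing \mid \mathcal{R}^1_{t_N} \subseteq B_N) = \frac{\Pbf^1_0(\mathcal{R}^1_{t_N} \subseteq B_N \setminus K)}{\Pbf^1_0(\mathcal{R}^1_{t_N} \subseteq B_N)}.
\]
The denominator is at least $c \lambda_N^{t_N}$ by Lemma~\ref{lem:rester-ds-boule}. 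For the numerator, I would split $t_N = T_0 + (t_N - T_0)$ with $T_0 = \lceil C N^2 \log N \rceil$ for $C$ large, apply the Markov property at $T_0$, and spectrally expand the remaining probability on the principal eigencouple $(\lambda_{B_N\setminus K}, \Phi_{B_N\setminus K})$ of the walk killed on $\partial B_N \cup K$, as in Lemma~\ref{lem:eigentrucs}. This yields $\Pbf^1_0(\mathcal{R}^1_{t_N} \subseteq B_N \setminus K) \leq C_N \lambda_{B_N\setminus K}^{t_N}$ with $C_N$ at most polynomial in $N$ and uniform in $K$, so that the ratio above is controlled by $C_N' (\lambda_{B_N\setminus K}/\lambda_N)^{t_N}$.

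The crux of the argument, and the main obstacle, is the uniform spectral-gap estimate
\[
\lambda_N - \lambda_{B_N\setminus K} \geq c(\eps)\, \frac{\cpc(K)}{N^d},
\]
valid for $K \subseteq B_N^{1-\eps}\setminus B_N^{1-3\eps}$. I would prove this via the Rayleigh--Ritz variational characterization of $\lambda_{B_N\setminus K}$, evaluated at a test function $f = \Phi_N(1-h_K)$ with $h_K(\cdot) = \Pbf_\cdot(H_K < +\infty)$ the equilibrium potential of $K$ (so that $f \equiv 0$ on $K$). Using the harmonicity of $h_K$ off $K$, the eigenvalue shift reduces to a quantity of order $\Phi_N(z)^2\, \cpc(K)$ for $z$ near $K$, and the continuous principal eigenfunction of the unit ball being bounded away from zero on $\{1-3\eps \leq |x| \leq 1-\eps\}$ yields $\Phi_N(z)^2 \geq c(\eps) N^{-d}$ there via \eqref{statement:borne-vect-propre} and \eqref{eq:encadrement-lambda}. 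The delicate technical point is to control the cross-terms in the Rayleigh quotient uniformly as $K$ varies in the annulus; an alternative, more probabilistic route is to split $[0,t_N]$ into blocks of length $\asymp N^2$ and use that, after each such block, the walk conditioned to stay in $B_N$ is close to the quasi-stationary distribution $\Phi_N$, from which the hitting probability of $K$ in the next block is of order $\cpc(K)/N^{d-2}$ via the last-exit decomposition.

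Combining with $\cpc(K) \geq \eta \Theta_{\eps N}$ and the elementary bound $\Theta_{\eps N}/\Theta_N \geq c(\eps) > 0$ (namely $c(\eps) = \eps$ in $d=3$ and $c(\eps) \geq c \eps^2$ in $d \geq 4$), I obtain $\lambda_N - \lambda_{B_N\setminus K} \geq c(\eps,\eta) \Theta_N/N^d$, hence
\[
\Bigl(\frac{\lambda_{B_N\setminus K}}{\lambda_N}\Bigr)^{t_N} \leq \exp\Bigl(-c(\eps,\eta)\, \frac{t_N \Theta_N}{N^d}\Bigr),
\]
which dominates the polynomial prefactor $C_N'$ as soon as $t_N \Theta_N/N^d \to +\infty$ sufficiently fast. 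This is automatic for $d \geq 4$; in $d=3$, the borderline regime $N^2 \ll t_N \leq C N^2 \log N$ requires bypassing Lemma~\ref{lem:eigentrucs} and using the ball-specific Lemma~\ref{lem:rester-ds-boule} (valid for $T \geq N^2$ with a universal constant) to avoid the logarithmic loss. Integrating the resulting uniform bound against $\bar{e}_{B_N}$ on $\mathcal{A}_{N,2}^{\eps,\eta}$ concludes the proof.
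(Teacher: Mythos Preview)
Your spectral approach for $d \geq 4$ matches the paper's: it too bounds the ratio $\Pbf_0(\mathcal{R}_{t_N}\subseteq B_N\setminus K)/\Pbf_0(\mathcal{R}_{t_N}\subseteq B_N)$ by $C(\lambda_{B_N\setminus K}/\lambda_N)^{t_N}$ via Lemmas~\ref{lem:rester-Kn} and~\ref{lem:eviter-via-vp}, and then invokes an eigenvalue-shift estimate $\lambda_N-\lambda_{B_N\setminus K}\geq c\min\bigl(N^{-d}\cpc(K),N^{-2}\bigr)$ (Lemma~\ref{lem:difference-vp-Dn-moins-range}). The paper does not go through a Rayleigh--Ritz computation with $\Phi_N(1-h_K)$ but instead imports \cite[Lemma~B.2]{dingDistributionRandomWalk2021a}; your variational route is a legitimate alternative, though you correctly flag that the cross-terms need care.

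The genuine gap is in $d=3$. Your fix for the borderline window $N^2\ll t_N\le CN^2\log N$ is to replace Lemma~\ref{lem:eigentrucs} by Lemma~\ref{lem:rester-ds-boule}, but that lemma only controls $\Pbf_0(\mathcal{R}_T\subseteq B_N)$ for the full ball; it says nothing about $\Pbf_0(\mathcal{R}_T\subseteq B_N\setminus K)$, and there is no uniform-in-$K$ analogue available for $T\asymp N^2$. So you cannot obtain the upper bound $C\lambda_{B_N\setminus K}^{t_N}$ on the numerator without the $N^2\log N$ cushion, and your polynomial prefactor $C_N'$ is then not dominated. The paper explicitly abandons the spectral route in $d=3$ for exactly this reason and instead runs an excursion argument: it defines successive crossings from $\partial B_N^{1-3\eps}$ to $\partial B_N^{1-\delta}$, shows (Lemma~\ref{lem:toucher-range-excursion}) that each crossing hits $K$ with probability at least $c_0=c_0(\eps,\eta,\delta)>0$ via a last-exit decomposition and the three-dimensional Green's function lower bound $G_{B_N^{1-\delta}}(x,y)\ge c_\delta/N$, and then shows (Lemma~\ref{lem:transfo-laplace-nb-excursions}) that the number of such excursions before time $t_N$ tends to infinity under the conditioning, using only Lemma~\ref{lem:rester-ds-boule} and \eqref{eq:proba-rester-ds-anneau}. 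Your ``alternative probabilistic route'' via blocks of length $\asymp N^2$ and convergence to the quasi-stationary law is heading in this direction, but you would still need to establish mixing to $\Phi_N$ on the $N^2$ timescale under the conditioning, which is again the missing ingredient; the paper's excursion decomposition sidesteps this entirely.
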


Indeed, assuming Proposition \ref{prop:proba-avoid-bon-event}, we choose $\eps > 0$ and $\eta > 0$, and recall \eqref{eq:ratio-CRW-decomp-range-infini}. Then, combining Lemma \ref{lem:proba-mauvaise-config} and Proposition \ref{prop:proba-avoid-bon-event}, we have
\begin{equation}
	\liminf_{N \to +\infty} \Ebf^1_0 [\varsigma_{t_N}^\mathrm{RW} \, | \, \mathcal{R}^1_{t_N} \subseteq B_N] \geq 1 - \delta(\eps, \eta) \, .
\end{equation}
Taking $\eta$ and $\eps$ to zero then yields Theorem \ref{th:ratio-cap-RW-boule} in the case $t_N \Theta_N / N^d \to +\infty$.

\par A way to prove Proposition \ref{prop:proba-avoid-bon-event} is to relate the probability in the expectation to eigenvalues of the transition matrix of the random walk killed when hitting $\partial B_N \cup \mathcal{R}^2_\infty$. However, we are only able to prove this connection in the case where $t_N$ is at least $C N^2 \log N$ for a constant $C > 0$ large enough, which does not cover the full regime $t_N/ N^2 \to +\infty$ if $d = 3$.
Therefore, we will need to adopt another strategy for dimension three, which will fully use the fact that for $d \leq 3$, the random walk can easily hit another independent random walk (see \cite[Chapter 5]{lawlerIntersectionsRandomWalks1991} for example).

\subsubsection{Dimension four and higher}

On the event $\big\{ H^2_{B^{1-2\eps}_N} < +\infty \big\}$, recall the definition of $\mathcal{R}^{2,\eps}_{\infty}$ just above Lemma \ref{lem:proba-mauvaise-config}, and write $K = K_N^\eps$ for the connected component of $B_N \setminus \mathcal{R}^{2,\eps}_{\infty}$ that contains the origin. More simply put, $K_N^\eps$ contains all the points of $B_N \setminus \mathcal{R}^{2,\eps}_{\infty}$ that are accessible for the random walk started at $0$.
As explained previously, in dimension $d \geq 4$ we are interested in the case $t_N \gg N^2 \log N$. This means that the $\Pbf^1_0(\cdot \, | \, \mathcal{R}^1_{t_N} \subseteq B_N)$-probability for $\mathcal{R}^1_{t_N}$ to avoid $\mathcal{R}^{2,\eps}_{\infty}$ is given by the first eigenvalue of the domain $K_N^\eps$.

In the following, we will write $\lambda(\Lambda)$ to designate the first eigenvalue of the transition matrix $P_\Lambda$ of the simple random walk killed when exiting $\Lambda$.

\begin{lemma}\label{lem:rester-Kn}
	There is a constant $c > 0$ such that for $N$ large enough and $T \geq c' N^2 \log N$ with $c'$ large enough,
	\begin{equation}\label{eq:dim4+:proba-rester-K}
		\Pbf_0(\mathcal{R}_{T} \subseteq K_N^\eps) \leq c \lambda(K_N^\eps)^{T} \, .
	\end{equation}
	Moreover, there are positive constants $b, b_\eps, \mu$ such that for all $N$ large enough,
	\begin{equation}\label{eq:encadrement-lambdaK}
		1 - \frac{b_\eps}{N^2} \leq \lambda (B_0^{1-\eps}) \leq \lambda(K_N^\eps) \leq \lambda_N \leq 1 - \frac{b}{N^2} \quad , \qquad |\Phi_{K_N^\eps}|_\infty \leq \mu N^{-d} \, .
	\end{equation}
\end{lemma}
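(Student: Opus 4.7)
The plan is to handle the three pieces of the lemma separately, starting with the eigenvalue sandwich and the sup-norm bound, then using them as inputs for~\eqref{eq:dim4+:proba-rester-K}. By construction $\tau_\eps$ is, when starting from $\partial B_N^{1-2\eps}$, the exit time of the annulus $B_N^{1-\eps} \setminus B_N^{1-3\eps}$, so the range $\mathcal{R}^{2,\eps}_\infty$ lives inside this annulus and therefore $B_N^{1-3\eps} \subseteq K_N^\eps \subseteq B_N$ (up to a boundary layer of size $1$ which does not affect the asymptotics). Domain monotonicity of the principal Dirichlet eigenvalue of the killed kernel (a larger connected domain gives an eigenvalue closer to $1$) then yields the middle inequalities in~\eqref{eq:encadrement-lambdaK}, while the extremal bounds $1 - b/N^2$ and $1 - b_\eps/N^2$ follow from the asymptotic~\eqref{eq:encadrement-lambda} applied respectively to $B_N$ and $B_N^{1-3\eps}$, with $b_\eps$ proportional to $(1-3\eps)^{-2}$. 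The sup-norm bound is then immediate from~\eqref{statement:borne-vect-propre}: since $1 - \lambda(K_N^\eps) \leq b_\eps/N^2$, one gets $|\Phi_{K_N^\eps}|_\infty \leq \kappa (b_\eps/N^2)^{d/2}$, which is the announced $\mu N^{-d}$ bound.

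For~\eqref{eq:dim4+:proba-rester-K} I would mimic the spectral-decomposition strategy sketched for Lemma~\ref{lem:eigentrucs}. Let $(\lambda^{(j)}, \varphi^{(j)})_{j \geq 1}$ denote the $\ell^2$-orthonormal eigencouples of $P_{K_N^\eps}$, with $\lambda^{(1)} = \lambda(K_N^\eps)$; symmetry of the kernel gives
\[ \Pbf_0(\mathcal{R}_T \subseteq K_N^\eps) = \sum_{j \geq 1} (\lambda^{(j)})^T \varphi^{(j)}(0) \langle \varphi^{(j)}, \mathbf{1}\rangle . \]
The $j=1$ term contributes $O(\lambda(K_N^\eps)^T)$ once one uses $\varphi^{(1)}(0) \lesssim N^{-d/2}$ and $\|\varphi^{(1)}\|_1 \lesssim N^{d/2}$ (by Cauchy-Schwarz from $\ell^2$-normalization). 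For the tail, Cauchy-Schwarz combined with the identity $\sum_{j\geq 1} (\lambda^{(j)})^{2} \varphi^{(j)}(0)^2 = P^{2}(0,0) \leq 1$ bounds the secondary contribution by $(\lambda^{(2)})^{T-1} |K_N^\eps|^{1/2}$. Everything thus reduces to showing $(\lambda^{(2)}/\lambda^{(1)})^T N^{d/2} \to 0$, which is guaranteed for $T \geq c' N^2 \log N$ with $c'$ large enough, provided $K_N^\eps$ has a spectral gap of order $1/N^2$.

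The main obstacle is precisely establishing this uniform-in-$N$ spectral gap for the \emph{random} domain $K_N^\eps$. The Courant--Fischer minimax principle and domain monotonicity give $\lambda^{(2)}(K_N^\eps) \leq \lambda^{(2)}(B_N) = 1 - \mu/(2dN^2)(1+o(1))$, with $\mu$ the second Dirichlet--Laplacian eigenvalue on the unit ball in $\RR^d$, while the lower bound from the first step gives $\lambda^{(1)}(K_N^\eps) \geq 1 - b_\eps/N^2$. A positive gap follows whenever $(1-3\eps)^{-2} \lambda < \mu$, which holds for $\eps$ small enough. In the remaining regime one needs a sharper upper bound on $\lambda^{(2)}(K_N^\eps)$, for instance via an explicit test function antisymmetric with respect to a hyperplane whose $O(1)$-neighborhood has small probability of meeting $\mathcal{R}^{2,\eps}_\infty$; this should only affect the constant $c'$, not the logarithmic scale.
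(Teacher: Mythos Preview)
Your argument tracks the paper's closely: the eigenvalue sandwich via $B_N^{1-3\eps}\subseteq K_N^\eps\subseteq B_N$ plus domain monotonicity, and the sup-norm bound via \eqref{statement:borne-vect-propre}, are exactly the paper's steps. For \eqref{eq:dim4+:proba-rester-K} the paper also goes through the spectral decomposition, but packages it differently: it states the intermediate estimate
\[
\Big|\Pbf_0(\mathcal{R}_T\subseteq K_N^\eps)-\lambda(K_N^\eps)^T\,\frac{\Phi_{K_N^\eps}(0)}{\|\Phi_{K_N^\eps}\|_2^2}\Big|\le\lambda(K_N^\eps)^T\,|K_N^\eps|\,e^{-cT/N^2},
\]
defers both this and the spectral gap to \cite[Lemmas~3.10 and~A.2]{dingDistributionRandomWalk2021a}, and then bounds the leading coefficient $\Phi_{K_N^\eps}(0)/\|\Phi_{K_N^\eps}\|_2^2$ by a counting argument (the sup-norm bound forces at least $c_1 N^d$ sites with $\Phi_{K_N^\eps}\ge c_2 N^{-d}$, hence $\|\Phi_{K_N^\eps}\|_2^2\ge c_3 N^{-d}$), which is equivalent to your $\varphi^{(1)}(0)\langle\varphi^{(1)},\mathbf 1\rangle=O(1)$. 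Your explicit Courant--Fischer gap argument for small $\eps$ is a welcome detail the paper omits, and small $\eps$ is all that is used downstream, so the hand-wavy ``remaining regime'' is moot.

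One genuine oversight: $\ZZ^d$ is bipartite, so the spectrum of $P_{K_N^\eps}$ is symmetric about $0$ and in particular $-\lambda^{(1)}$ is an eigenvalue. Your Cauchy--Schwarz tail bound ``$(\lambda^{(2)})^{T-1}|K_N^\eps|^{1/2}$'' therefore fails as written, since $\max_{j\ge2}|\lambda^{(j)}|=\lambda^{(1)}$, not $\lambda^{(2)}$. The standard repair is to pair the $\pm\lambda^{(j)}$ contributions (equivalently, work with $P_{K_N^\eps}^2$ restricted to the even sublattice), after which the relevant gap is between the first two \emph{positive} eigenvalues and your Courant--Fischer bound applies cleanly. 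The paper sidesteps this by deferring to \cite{dingDistributionRandomWalk2021a} rather than writing out the spectral sum.
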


\begin{proof}
	We first prove \eqref{eq:encadrement-lambdaK}. The first part follows from $B_N^{1-3\eps} \subseteq K_N^\eps \subseteq B_N$ and \cite[Lemma A.2]{dingDistributionRandomWalk2021a}, which is an application of the minmax theorem for eigenvalues and asymptotic expansion for the eigenvalue of a discrete blowup. Combining this with \eqref{statement:borne-vect-propre} gives the second part.
	Recall that $\Phi_K$ stands for the first eigenvector of the transition matrix of simple random walk starting from $0$, killed on $\partial K$.
	The first part of the lemma is a consequence of the following claim: assuming $T \geq N^2 \log N$ and $N$ large enough, we have
	\begin{equation}\label{eq:dim4+:proba-rester-K-t_N}
		\Big| \Pbf_0(\mathcal{R}_{T} \subseteq K_N^\eps) - \lambda(K_N^\eps)^{T} \frac{\Phi_{K_N^\eps}(0)}{\| \Phi_{K_N^\eps} \|_2^2} \Big| \leq \lambda(K_N^\eps)^{T} |K_N^\eps| e^{-c \frac{T}{N^2}} \, .
	\end{equation}
	This claim follows from the same proof as in \cite[Lemma 3.10]{dingDistributionRandomWalk2021a} thanks to \eqref{eq:encadrement-lambdaK} that implies a spectral gap of order $N^{-2}$ for $K_N^\eps$ (see \cite[Lemma A.2]{dingDistributionRandomWalk2021a}).
	\par Notice that combining $|\Phi_{K_N^\eps}|_\infty \leq \mu N^{-d}$ with $\| \Phi_{K_N^\eps} \| = 1$ implies that there are at least $c_1 N^d$ points $z$ in $K_N^\eps$ such that $\Phi_{K_N^\eps}(z) \geq c_2 N^{-d}$, where $c_1, c_2$ are some small constants, independent from $N$ large enough. In particular, for any configuration of $K_N^\eps$,
	\[ \frac{\Phi_{K_N^\eps}(0)}{\| \Phi_{K_N^\eps} \|_2^2} \leq \mu N^{-d} \bigg[ \sum_{z \in K_N^\eps} \Phi_{K_N^\eps}^2(z) \mathbbm{1}_{\big\{\Phi_{K_N^\eps}(z) \geq c_2 N^{-d}\big\}} \bigg]^{-1} \leq \frac{\mu N^{-d}}{c_2^2 N^{-2d} c_1 N^d} = \frac{\mu}{c_1 c_2^2} \, . \]
	Assuming $T \geq c N^2 \log N$ with $c$ and $N$ large enough to have $|K_N^\eps| e^{-c \frac{T}{N^2}} \leq 1$, \eqref{eq:dim4+:proba-rester-K} implies
	\[
	\Pbf_0(\mathcal{R}_{T} \subseteq K_N^\eps) \leq 2 \lambda(K_N^\eps)^{T} \frac{\Phi_{K_N^\eps}(0)}{\| \Phi_{K_N^\eps}^2 \|} \leq \frac{2 \mu}{c_1 c_2^2} \lambda(K_N^\eps)^{T} \, ,
	\]
	thus proving \eqref{eq:dim4+:proba-rester-K} which concludes the proof of the lemma.
\end{proof}

\begin{lemma}\label{lem:eviter-via-vp}
	For any $\eps, \eta$ small enough, there exist positive constants $c, C$ such that for all $N$ large enough, all $z \in \partial B_N$,
	\begin{equation}
		\Ebf^2_z [ \Pbf^1_0(\mathcal{R}_{t_N} \subseteq K_N^\eps \, | \, \mathcal{R}_{t_N} \subseteq B_N) \mathbbm{1}_{\mathcal{A}_{N,2}^{\eps, \eta}}] \leq C \Ebf^2_z \left[ \exp \big( - c t_N (\lambda_N - \lambda(K_N^\eps)) \big) \mathbbm{1}_{\mathcal{A}_{N,2}^{\eps, \eta}} \right] \, .
	\end{equation}
\end{lemma}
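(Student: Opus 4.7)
The plan is to rewrite the conditional probability as a ratio and estimate numerator and denominator separately using Lemmas \ref{lem:rester-Kn} and \ref{lem:rester-ds-boule}. On the event $\mathcal{A}_{N,2}^{\eps,\eta}$, we have $H^2_{B_N^{1-2\eps}} < +\infty$, so $\mathcal{R}^{2,\eps}_{\infty}$ and hence $K_N^\eps$ are well-defined; moreover, by construction $\mathcal{R}^{2,\eps}_\infty$ stays in the annulus $B_N^{1-\eps} \setminus B_N^{1-3\eps}$, which (for $\eps$ small enough) does not contain the origin, so $0 \in K_N^\eps$. We then write
\[
\Pbf^1_0 \big( \mathcal{R}_{t_N} \subseteq K_N^\eps \, \big| \, \mathcal{R}_{t_N} \subseteq B_N \big)
= \frac{\Pbf^1_0(\mathcal{R}_{t_N} \subseteq K_N^\eps)}{\Pbf^1_0(\mathcal{R}_{t_N} \subseteq B_N)}.
\]
The estimate will be deterministic in the realization of walk 2 once $\mathcal{A}_{N,2}^{\eps,\eta}$ is imposed, so it suffices to derive a pointwise bound and integrate against $\Ebf^2_z[\, \cdot \, \mathbbm{1}_{\mathcal{A}_{N,2}^{\eps,\eta}}]$ at the end.

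First, I would verify that the standing assumption $t_N \Theta_N / N^d \to +\infty$ together with $d \geq 4$ gives $t_N \gg N^2 \log N$: for $d = 4$ one has $\Theta_N = N^2/\log N$, so the assumption is literally $t_N/(N^2 \log N) \to +\infty$; for $d \geq 5$ one has $\Theta_N = N^2$ and thus $t_N \gg N^{d-2} \gg N^2 \log N$. This is precisely the regime in which Lemma \ref{lem:rester-Kn} applies, yielding $\Pbf^1_0(\mathcal{R}_{t_N} \subseteq K_N^\eps) \leq c \, \lambda(K_N^\eps)^{t_N}$, while Lemma \ref{lem:rester-ds-boule} gives $\Pbf^1_0(\mathcal{R}_{t_N} \subseteq B_N) \geq c^{-1} \lambda_N^{t_N}$. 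Taking the ratio, we obtain the pathwise bound
\[
\Pbf^1_0 \big( \mathcal{R}_{t_N} \subseteq K_N^\eps \, \big| \, \mathcal{R}_{t_N} \subseteq B_N \big) \leq C \Big( \frac{\lambda(K_N^\eps)}{\lambda_N} \Big)^{t_N}.
\]

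Next, I would turn the ratio into an exponential in the eigenvalue gap. Using $\log x \leq x-1$ for $x > 0$,
\[
\log \frac{\lambda(K_N^\eps)}{\lambda_N} \leq \frac{\lambda(K_N^\eps) - \lambda_N}{\lambda_N} = -\frac{\lambda_N - \lambda(K_N^\eps)}{\lambda_N} \leq -\big(\lambda_N - \lambda(K_N^\eps)\big),
\]
where the last step uses $\lambda_N \leq 1$ and the ordering $\lambda(K_N^\eps) \leq \lambda_N$ from \eqref{eq:encadrement-lambdaK}. Exponentiating and integrating against $\Ebf^2_z[\, \cdot \, \mathbbm{1}_{\mathcal{A}_{N,2}^{\eps,\eta}}]$ then yields the announced bound with $c = 1$ (any smaller positive constant also works).

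The main obstacle is the hypothesis $t_N \gg N^2 \log N$ required by Lemma \ref{lem:rester-Kn}: the extra logarithmic factor arises because one must dominate all secondary eigenmodes in the spectral decomposition of the killed transition operator, using the spectral gap of order $N^{-2}$ provided by \eqref{eq:encadrement-lambdaK}. In dimension $d = 3$, the standing assumption $t_N \Theta_N / N^d \to +\infty$ only yields $t_N \gg N^2$, which is insufficient; this is exactly the shortfall noted in the paper's introduction and is why dimension three must be handled by a separate, non-spectral argument exploiting the easier intersection properties of independent random walks in low dimension.
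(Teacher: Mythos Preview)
Your proof is correct and follows essentially the same approach as the paper: write the conditional probability as a ratio, bound numerator and denominator via Lemmas~\ref{lem:rester-Kn} and~\ref{lem:rester-ds-boule}, convert the ratio $(\lambda(K_N^\eps)/\lambda_N)^{t_N}$ into an exponential in the eigenvalue gap, and integrate against $\Ebf^2_z[\,\cdot\,\mathbbm{1}_{\mathcal{A}_{N,2}^{\eps,\eta}}]$. The only cosmetic difference is that the paper invokes a Taylor expansion of the logarithm (using that both eigenvalues are $1-O(N^{-2})$) to obtain some constant $c>0$, whereas your use of $\log x \leq x-1$ together with $\lambda_N \leq 1$ yields the clean constant $c=1$ without appealing to the size of $1-\lambda_N$; your explicit check that $t_N \gg N^2\log N$ in $d\geq 4$ is also a welcome addition over the paper's parenthetical ``recall''.
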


\begin{proof}
	Note that using Lemmas \ref{lem:rester-ds-boule} \& \ref{lem:rester-Kn} (recall $t_N \gg N^2 \log N$), we get that for $n$ large enough,
	\begin{equation}\label{eq:lambda-vs-proba-Kn}
		\Pbf^1_0(\mathcal{R}_{t_N} \subseteq K_N^\eps \, | \, \mathcal{R}_{t_N} \subseteq B_N) = \frac{\Pbf^1_0(\mathcal{R}_{t_N} \subseteq K_N^\eps)}{\Pbf^1_0(\mathcal{R}_{t_N}\subseteq B_N)} \leq c_1 \bigg(\frac{\lambda(K_N^\eps)}{\lambda_N} \bigg)^{t_N} \, .
	\end{equation}
	Using \eqref{eq:encadrement-lambdaK}, we know that both $1 - \lambda_N$ and $1- \lambda(K_N^\eps)$ are of order $\asymp N^{-2}$. Therefore, a Taylor expansion of the logarithm implies that
	\[ \bigg(\frac{\lambda(K_N^\eps)}{\lambda_N} \bigg)^{t_N} = \exp \left( t_N [ \log \lambda_N - \log \lambda(K_N^\eps)] \right) \leq \exp \left( - c t_N (\lambda_N - \lambda(K_N^\eps)) \right) \, , \]
	for some constant $c > 0$ that is uniform in $N$ large enough. Since this holds true for any configuration of the range $\mathcal{R}^2_\infty$, we can take the expectation $\Ebf^2_z \big[ \, ( \, \cdot \, ) \, \mathbbm{1}_{\mathcal{A}_{N,2}^{\eps,\eta}} \big]$ on both sides.
\end{proof}

\begin{lemma}\label{lem:difference-vp-Dn-moins-range}
	There is a constant $c > 0$, independent from $N$, such that
	\begin{equation}
		\lambda_N - \lambda (K_N^\eps) \geq c \min \Big( N^{-d} \cpc(\mathcal{R}_\infty^{2,\eps}) \, ; \, N^{-2} \Big) \, .
	\end{equation}
\end{lemma}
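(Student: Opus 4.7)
The plan is to compare the two principal eigenvalues $\lambda_N$ and $\lambda(K_N^\eps)$ via their variational (Rayleigh–Ritz) characterization combined with a ground-state decomposition, then to lower bound the resulting Rayleigh quotient by means of a capacitary inequality. Throughout, let $\Phi_N$ and $f$ denote the $\ell^2$-normalized principal eigenfunctions of $B_N$ and of $K_N^\eps$ respectively, both extended by $0$ outside their domains; since $\Phi_N > 0$ on $B_N$ we may set $g := f/\Phi_N$ on $B_N$ (and $g = 0$ outside), and note that $g \equiv 0$ on $\mathcal{R}_\infty^{2,\eps}$.

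The first step is a ground-state identity. Writing $f = \Phi_N g$ and using $(I-P)\Phi_N = (1-\lambda_N)\Phi_N$ on $B_N$, a direct summation by parts yields
\[
\lambda_N - \lambda(K_N^\eps) \;=\; \frac{1}{4d}\sum_{\substack{x \sim y\\ x,y \in B_N}} \Phi_N(x)\Phi_N(y)\bigl(g(x)-g(y)\bigr)^2 \, .
\]
This reduces the problem to lower bounding a weighted Dirichlet form of a function vanishing on $\mathcal{R}_\infty^{2,\eps}$ (and on $\partial B_N$).

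Second, because $\mathcal{R}_\infty^{2,\eps} \subseteq B_N^{1-\eps}\setminus B_N^{1-3\eps}$ lies in the interior of $B_N$, the convergence of the rescaled $\Phi_N$ to the (smooth, strictly positive in the open ball) continuous principal Dirichlet eigenfunction $\phi_0$ of the unit ball gives a uniform lower bound $\Phi_N(x) \geq c_\eps N^{-d/2}$ for $x \in B_N^{1-\eps/2}$. Hence the weighted edge conductance satisfies $\Phi_N(x)\Phi_N(y) \geq c_\eps^2 N^{-d}$ on neighboring pairs in a thickened neighborhood of $\mathcal{R}_\infty^{2,\eps}$. Combining this pointwise bound with the variational form of capacity, namely $\mathcal{E}(h,h) \geq \cpc(K)(\sup|h|)^2$ for every $h$ vanishing on $K$, together with the observation that the eigenfunction $g$ is of order $1$ on a subset of $K_N^\eps$ of volume $\asymp N^d$ (as a consequence of $\|f\|_2 = 1$ and $|\Phi_N|_\infty \asymp N^{-d/2}$), one obtains, in the regime $\cpc(\mathcal{R}_\infty^{2,\eps}) \leq c_0 N^{d-2}$,
\[
\sum_{\substack{x\sim y\\ x,y\in B_N}}\Phi_N(x)\Phi_N(y)\bigl(g(x)-g(y)\bigr)^2 \;\gtrsim_\eps\; N^{-d}\cpc(\mathcal{R}_\infty^{2,\eps}) \, ,
\]
which is the $c N^{-d}\cpc(\mathcal{R}_\infty^{2,\eps})$ contribution to the lemma.

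The remaining $N^{-2}$ cap is handled separately, in the regime $\cpc(\mathcal{R}_\infty^{2,\eps}) \geq c_0 N^{d-2}$: the range then has capacity comparable to $\cpc(B_N)$, and in this regime it forms a separating barrier in the annulus, forcing $K_N^\eps \subseteq B_N^{1-\eps}$. Monotonicity of the principal eigenvalue and the explicit expansion \eqref{eq:encadrement-lambda} applied to $B_N$ and $B_N^{1-\eps}$ then give $\lambda_N - \lambda(K_N^\eps) \geq \lambda_N - \lambda(B_N^{1-\eps}) \gtrsim \eps/N^2$. Combining the two regimes yields the claimed bound.

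The main obstacle is the capacitary estimate in the second step, with the correct scaling. Two technical difficulties have to be dealt with: the weight $\Phi_N$ degenerates near $\partial B_N$, so the uniform lower bound on $\Phi_N$ does not extend to the whole of $B_N$; and the test function $g = f/\Phi_N$, being a ratio of two eigenfunctions, does not a priori vanish near $\partial B_N$. Both are handled through a localization via a smooth cutoff supported on $B_N^{1-\eps/2}$, together with a Harnack-type bound guaranteeing that $\Phi_N$ varies slowly on nearest-neighbor pairs in the interior, so that the weighted capacity in the Dirichlet form is comparable to $N^{-d}$ times the unweighted capacity $\cpc(\mathcal{R}_\infty^{2,\eps})$.
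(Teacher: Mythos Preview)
Your approach via the ground-state (Doob) identity is a natural alternative to the paper's, which proceeds by verifying the hypotheses of \cite[Lemma~B.2]{dingDistributionRandomWalk2021a} and then summarizing that argument. The identity in your first step is correct. However, there are two genuine gaps in the remaining steps.

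First, the capacitary inequality you invoke, ``$\mathcal{E}(h,h) \geq \cpc(K)(\sup|h|)^2$ for every $h$ vanishing on $K$'', is false as stated. Take $K = B_R$ with $R$ large, so that $\cpc(K)$ is of order $R^{d-2}$, and $h = \mathbbm{1}_{\{x_0\}}$ with $|x_0| = 2R$; then $\mathcal{E}(h,h)$ is a fixed constant while $\cpc(K)(\sup|h|)^2$ grows like $R^{d-2}$. The correct variational principle requires $h$ to be bounded below on a set \emph{surrounding} $K$, which yields a relative capacity. In your setting this would mean showing that $g = f/\Phi_N$ is uniformly bounded below on, say, $\partial B_N^{1-4\eps} \cup \partial B_N^{1-\eps/2}$; but $f$ is the principal eigenfunction of the perturbed domain $K_N^\eps$, and a uniform lower bound for it on spheres straddling $\mathcal{R}_\infty^{2,\eps}$ is precisely the kind of information you are trying to extract. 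Knowing only that $g$ is of order~$1$ on \emph{some} set of volume $\asymp N^d$ does not suffice.

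Second, and more seriously, your handling of the $N^{-2}$ cap is wrong. You claim that when $\cpc(\mathcal{R}_\infty^{2,\eps}) \geq c_0 N^{d-2}$, the range ``forms a separating barrier in the annulus, forcing $K_N^\eps \subseteq B_N^{1-\eps}$''. This is false: $\mathcal{R}_\infty^{2,\eps}$ is a random walk trajectory, a one-dimensional object in $\ZZ^d$ for $d \geq 3$, and it does not disconnect the annulus regardless of its capacity. Generically $K_N^\eps = B_N \setminus \mathcal{R}_\infty^{2,\eps}$ extends all the way to $\partial B_N$, so the inclusion $K_N^\eps \subseteq B_N^{1-\eps}$ fails and your domain-monotonicity argument collapses. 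The paper's route avoids this entirely: it bounds $\lambda_N - \lambda(K_N^\eps)$ from below by a boundary-mass term $c\sum_{z \in \mathcal{R}_\infty^{2,\eps}} \Phi_{K_N^\eps}(z)$, and then lower-bounds this sum by either $c' N^{-2}$ or $c' N^{-d}\cpc(\mathcal{R}_\infty^{2,\eps})$ according to whether or not the walk started from the bulk of $K_N^\eps$ hits $\mathcal{R}_\infty^{2,\eps}$ before $\partial B_N$ with uniformly positive probability. No disconnection is invoked.
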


\begin{proof}
	Using \eqref{eq:encadrement-lambdaK}, provided $\eps > 0$ small enough and $N \geq 1$ large enough, we have
	\begin{equation}
		\sum_{z \in K_N^\eps \setminus B_N^{1-3\eps}} \Phi_{K_N^\eps}(z) \leq \mu |K_N^\eps \setminus B_N^{1-3\eps}| N^{-d} \leq \mu |B_N \setminus B_N^{1-3\eps}| N^{-d} \leq \frac12 \, ,
	\end{equation}
	as well as $\sum_{z \in B_N \setminus B_N^{1-3 \eps}} \Phi_N(z) \leq \tfrac12$.
	From this, we deduce
	\begin{equation}\label{eq:sum-phiK-interieur}
		\sum_{z \in B_N^{1-3 \eps}} \Phi_{K_N^\eps}(z) \geq \frac12 \quad , \quad \sum_{z \in B_N^{1-3 \eps}} \Phi_N(z) \geq \frac12 \, .
	\end{equation}
	To end the proof, we will apply the proof of \cite[Lemma B.2]{dingDistributionRandomWalk2021a} which we quickly summarize. With the notation of \cite{dingDistributionRandomWalk2021a}, we take $\mathcal{B} = B_{R_1} = B_N$, $B_{R_2} = B_N^{1-\eps}$, $B_{R_3} = B_N^{1-3\eps}$ and $\mathcal{B}_o = K_N^\eps$. This ensures that $\mathcal{B} \setminus \mathcal{B}_o = \mathcal{R}_\infty^{2,\eps} \subseteq B_{R_2}$ and $B_{R_1} \subseteq \mathcal{B}$. Moreover, with \eqref{eq:sum-phiK-interieur} and $\lambda_{K_N^\eps} \geq \lambda_{B_{R_3}} \geq 1 - b N^{-2} = 1 - b' R_1^{-2}$, we see that all the assumptions of \cite[Lemma B.2]{dingDistributionRandomWalk2021a} hold in our case.
	
	For the sake of completeness, let us quickly summarize the main steps of the proof of \cite[Lemma B.2]{dingDistributionRandomWalk2021a}. First, \cite[(B.5)-(B.7)]{dingDistributionRandomWalk2021a} prove that there exists a constant $c_1 > 0$ such that
	\begin{equation}\label{eq:diff-vp-somme-bord}
		\lambda_{\mathcal{B}} - \lambda_{\mathcal{B}_o} \geq \frac{\min_{x \in B_{R_2}} \Phi_{\mathcal{B}}}{2d \| \Phi_{\mathcal{B}} \| \cdot \| \Phi_{\mathcal{B}_o} \|} \sum_{z \in \mathcal{B} \setminus \mathcal{B}_o} \Phi_{\mathcal{B}_o}(z) \geq c_1 \sum_{z \in \mathcal{B} \setminus \mathcal{B}_o} \Phi_{\mathcal{B}_o}(z) \, .
	\end{equation}
	Therefore, one only needs to get a lower bound on the sum in the right-hand side of \eqref{eq:diff-vp-somme-bord}. To do so, \cite[(B.10)]{dingDistributionRandomWalk2021a} combined with  gives
	\begin{equation}\label{eq:ding-somme-bord-cas-1}
		\sum_{z \in \mathcal{B} \setminus \mathcal{B}_o} \Phi_{\mathcal{B}_o}(z) \geq (1 - \lambda_\mathcal{B}) \sum_{x \in \mathcal{B}_o} \Phi_{\mathcal{B}_o}(x) \Pbf_x \big( S_{H_{\ZZ^d \setminus \mathcal{B}_o}} \in \mathcal{B} \setminus \mathcal{B}_o \big)
	\end{equation}
	Since $\lambda_\mathcal{B} \geq 1 - c R_1^{-2}$ (which stems from $\| \Phi_{\mathcal{B}_o} \|_\infty \geq c' R_1^{-d}$), we get that $\lambda_{\mathcal{B}} - \lambda_{\mathcal{B}_o} \geq c_2 R_1^{-2}$ provided that the sum on the right-hand side of \eqref{eq:ding-somme-bord-cas-1} is bounded from below by a constant.
	
	In the case where this sum is too small, it means in some way that the SRW can easily avoid touching $\mathcal{B} \setminus \mathcal{B}_o$ (note that this should be the case in dimension $d \geq 4$). What we get instead is \cite[(B.18)]{dingDistributionRandomWalk2021a} which reads
	\begin{equation}\label{eq:ding-somme-bord-cas-2}
		\sum_{z \in \mathcal{B} \setminus \mathcal{B}_o} \Phi_{\mathcal{B}_o}(z) \geq \frac{c_3}{R_1^d} \sum_{z \in \mathcal{B} \setminus \mathcal{B}_o} \Pbf_x \big( H_{\mathcal{B} \setminus \mathcal{B}_o} > H_{B_{R_1}} \big) \geq \frac{c_3}{R_1^d} \cpc \big( \mathcal{B} \setminus \mathcal{B}_o \big) \, .
	\end{equation}
	In conclusion, combining \eqref{eq:diff-vp-somme-bord} and \eqref{eq:ding-somme-bord-cas-1}-\eqref{eq:ding-somme-bord-cas-2} with our notations, the difference of eigenvalues $\lambda_N - \lambda (K_N^\eps)$ is bounded from below (up to a multiplicative constant) by either $R_1^{-2} = N^{-2}$ or $\cpc \big( \mathcal{B} \setminus \mathcal{B}_o \big) = \cpc \big( \mathcal{R}_\infty^{2,\eps} \big)$.
\end{proof}

\begin{proof}[Proof of Proposition \ref{prop:proba-avoid-bon-event} for $d \geq 4$]
	First using Lemma \ref{lem:eviter-via-vp}, we get the upper bound
	\begin{equation}
		\Ebf^2_z[ \Pbf^1_0 (\mathcal{R}^2_\infty \cap \mathcal{R}^1_{t_N} = \varnothing \, | \, \mathcal{R}^1_{t_N} \subseteq B_N) \mathbbm{1}_{\mathcal{A}_{N,2}^{\eps, \eta}}] \leq C \Ebf^2_z \big[ e^{- c t_N (\lambda_N - \lambda(K_N^\eps))} \mathbbm{1}_{\mathcal{A}_{N,2}^{\eps, \eta}} \big] \, .
	\end{equation}
	Then, with Lemma \ref{lem:difference-vp-Dn-moins-range},
	\begin{equation}
		\begin{split}
			\Ebf^2_z[ e^{- c t_N (\lambda_N - \lambda(K_N^\eps))} \mathbbm{1}_{\mathcal{A}_{N,2}^{\eps, \eta}}] &\leq \Ebf^2_z[ \exp \Big(- c' t_N  \min ( N^{-d} \cpc(\mathcal{R}_\infty^{2,\eps}) \, ; \, N^{-2}) \Big) \mathbbm{1}_{\mathcal{A}_{N,2}^{\eps, \eta}}]\\
			&\leq \exp \Big(-c \eta \frac{t_N}{N^d} \min ( \Theta_{\eps N} \, ; \, N^{d-2}) \Big) \, ,
		\end{split}
	\end{equation}
	where we used the definition of $\mathcal{A}_{N,2}^{\eps, \eta}$ to bound the capacity $\cpc(\mathcal{R}^{2,\eps}_{\infty})$ from below. Note that for any $\eps > 0$, provided $N$ large enough we always have $\Theta_{\eps N} \leq N^{d-2}$.
	Finally, since $\bar{e}_{B_N}$ is a probability measure on $\partial B_N$, we have
	\begin{equation}
		\sum_{z \in \partial B_N} \bar{e}_{B_N}(z) \Ebf^1_0 \big[ \Pbf^2_z(\mathcal{R}^2_\infty \cap \mathcal{R}^1_{t_N} = \varnothing, \mathcal{A}_{N,2}^{\eps, \eta}) \, | \, \mathcal{R}^1_{t_N} \subseteq B_N \big] \leq \exp \Big( -c \eta \frac{t_N}{N^d} \Theta_{\eps N} \Big) \, .
	\end{equation}
	By assumption, this upper bound goes to zero as $N \to +\infty$, thus proving the statement.
\end{proof}

\subsubsection{Dimension three}

As mentionned previously, in dimension three we want to take $t_N/N^2 \to +\infty$ thus we are not able to use Lemma \ref{lem:eviter-via-vp}. What we gain instead is that hitting a given set is easier than in higher dimensions. Fixing $\delta \in (0, \eps)$, the strategy of the proof consists in considering excursions of the conditioned walk from $B_N^{1-3\eps}$ to $B_N^{1-\delta}$. We prove in Lemma \ref{lem:toucher-range-excursion} that each of these excursions has a positive probability of hitting $\mathcal{R}^{2,\eps}_\infty$. By the Markov property, these excursions are independent and therefore the probability for $\mathcal{R}^1_{t_N}$ to avoid $\mathcal{R}^{2,\eps}_\infty$ is bounded by the product of probabilities. Lemma \ref{lem:transfo-laplace-nb-excursions} then proves that provided $t_N \gg N^2$, this product goes to $0$ as $N \to +\infty$, hence proving the theorem.

In this section, on the event $\mathcal{A}_{N,2}^{\eps,\eta}$, we fix a realization of the range $\mathcal{R}^{2,\eps}_\infty$, which is thus treated as any other subsets of $\ZZ^d$ with capacity at least $\Theta_{\eps N}$. In particular, hitting times refer to the other walk (with range $\mathcal{R}^1_{t_N}$). To lighten notation, we drop the \og{}$1$ \fg{} for the conditioned walk.

\begin{lemma}\label{lem:toucher-range-excursion}
	Let $d = 3$ and recall that $\mathcal{R}_\infty^{2,\eps} \subseteq B_N^{1-\eps} \setminus B_N^{1- 3\eps}$. Consider $\delta \in (0, \eps)$. There is a constant $c_0 = c_0(\eps,\eta, \delta) > 0$ such that uniformly in $N$ large enough, on the event $\mathcal{A}_{N,2}^{\eps,\eta}$ we have
	\begin{equation}
		\inf_{x \in B_N^{1-3\eps}} \Pbf_x \Big( H_{\mathcal{R}_\infty^{2,\eps}} < H_{\partial B_N^{1- \delta}} \Big) \geq c_0 \, .
	\end{equation}
\end{lemma}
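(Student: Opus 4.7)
The plan is to apply the last-exit decomposition for the random walk killed when leaving $V \defeq B_N^{1-\delta}$. Writing $K \defeq \mathcal{R}^{2,\eps}_\infty$, $\tau \defeq H_{\partial V}$ and $G_V(x,y) \defeq \Ebf_x\big[\sum_{n < \tau} \mathbbm{1}_{S_n = y}\big]$, conditioning on the last visit to $K$ strictly before $\tau$ yields the classical identity
\[
\Pbf_x(H_K < \tau) = \sum_{y \in K} G_V(x, y)\, e_{K,V}(y) \, ,
\]
where $e_{K,V}(y) \defeq \mathbbm{1}_{y \in K} \Pbf_y(\widetilde{H}_K > \tau)$ and $\widetilde{H}_K$ is the first positive return to $K$. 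Since $\{\widetilde{H}_K = +\infty\} \subseteq \{\widetilde{H}_K > \tau\}$, one has $e_{K,V}(\cdot) \geq e_K(\cdot)$ pointwise, hence $\mathrm{cap}_V(K) \defeq \sum_y e_{K,V}(y) \geq \cpc(K)$. On the event $\mathcal{A}^{\eps,\eta}_{N,2}$, in dimension $d=3$ where $\Theta_R = R$, this yields
\[
\mathrm{cap}_V(K) \geq \eta\, \Theta_{\eps N} = \eta\, \eps\, N \, .
\]

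The proof thus reduces to a uniform lower bound $G_V(x, y) \geq c(\eps,\delta)/N$ valid for every $x \in B_N^{1-3\eps}$ and every $y \in K \subseteq B_N^{1-\eps}$, provided $N$ is large enough. The key geometric input is that both $x$ and $y$ lie in $B_N^{1-\eps}$, hence at Euclidean distance at least $(\eps - \delta) N$ from $\partial V$ (using $\delta < \eps$). After rescaling by $N$, the claim amounts to saying that the three-dimensional Brownian Green function on $B_{1-\delta}$ is uniformly bounded below on any compact subset of its interior, a standard fact. The discrete Green function of the ball $B_{(1-\delta) N}$ converges (with the usual scaling by $N$) to its Brownian counterpart uniformly on such compacts, see e.g.\ \cite[Sec.~4.6 and~6.5]{lawlerRandomWalkModern2010}. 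Combining with the capacity bound above,
\[
\Pbf_x(H_K < \tau) \geq \frac{c(\eps,\delta)}{N} \cdot \eta\, \eps\, N = c(\eps,\delta)\, \eta\, \eps \eqdef c_0 > 0 \, ,
\]
uniformly in $x \in B_N^{1 - 3\eps}$ and in the realization of $K$ on $\mathcal{A}^{\eps,\eta}_{N,2}$, which is the desired statement.

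The main (and really only) technical point is the uniform lower bound on $G_V(x,y)$: the naive approach of writing $G_V(x,y) = g(x,y) - \Ebf_x[g(S_\tau, y)]$ and bounding each term separately fails when $\eps - \delta$ is small, because the two boundary-reflection terms are only of the same order $1/N$. Hence one really needs either a direct comparison with the explicit Brownian Green function of the ball (via the method of images), or the convergence statement for killed Green functions; all remaining steps are routine manipulations from potential theory.
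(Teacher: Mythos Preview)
Your proof is correct and follows essentially the same approach as the paper: last-exit decomposition for the walk killed on $\partial B_N^{1-\delta}$, the pointwise inequality $e_{K,V}\geq e_K$ to pass to the free capacity, and the capacity lower bound from the event $\mathcal{A}_{N,2}^{\eps,\eta}$. The only difference is in how the Green function lower bound $G_{B_N^{1-\delta}}(x,y)\geq c/N$ is justified: the paper proves it by a hands-on chaining argument (driving the walk into a small ball around $y$ through a corridor, then applying the explicit formula for $G_{B_r}(\cdot,0)$ from \cite[Prop.~6.3.5]{lawlerRandomWalkModern2010}), whereas you invoke convergence to the Brownian Green function of the ball; both are valid, though note that your phrase ``uniformly on such compacts'' is only literally correct away from the diagonal---near the diagonal the bound is trivially satisfied since $G_V(x,y)\geq c|x-y|^{2-d}$.
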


\begin{proof}
	For $A \subseteq \ZZ^d$, write $G_A$ for the Green's function of the simple random walk absorbed on the interior boundary $\partial A$, that is $G_A(x,y) = \Ebf_x \big[\sum_{i = 0}^{H_{\partial A}} \indic{S_i = y} \big]$ for $x,y \in A$. We use a last exit decomposition for the random walk killed on $\partial B_N^{1-\eps}$:
	\begin{equation}
		\Pbf_x \Big( H_{\mathcal{R}_\infty^{2,\eps}} < H_{\partial B_N^{1-\delta}} \Big) = \sum_{y \in \mathcal{R}_\infty^{2,\eps}} G_{B_N^{1-\delta}}(x,y) \Pbf_y(H_{\mathcal{R}_\infty^{2,\eps}} > H_{\partial B_N^{1-\delta}}) \, .
	\end{equation}
	Now, we claim that there is a constant $c_\eps > 0$ such that for all $x \in B_N^{1-3\eps}$, all $y \in \mathcal{R}_\infty^{2,\eps}$, we have $G_{B_N^{1-\delta}}(x,y) \geq c_\delta/N$ for some $c_\delta > 0$ uniform in $N$ large enough. The proof can be found in the Appendix, see Lemma \ref{lem:LB-green-bulk-boule}.
	
	Since avoiding $\mathcal{R}_\infty^{2,\eps}$ before exiting $B_N^{1-\delta}$ can be done by avoiding it forever, we deduce that
	\begin{equation}
		\Pbf_x \Big( H_{\mathcal{R}_\infty^{2,\eps}} < H_{\partial B_N^{1-\delta}} \Big) \geq \frac{c_\delta}{N} \sum_{y \in \mathcal{R}_\infty^{2,\eps}} \Pbf_y(H_{\mathcal{R}_\infty^{2,\eps}} = +\infty) = \frac{c_\delta}{N} \cpc \big(\mathcal{R}_\infty^{2,\eps} \big) \, .
	\end{equation}
	On the event $\mathcal{A}_{N,2}^{\eps,\eta}$, we have $\cpc \big(\mathcal{R}_\infty^{2,\eps}\big) \geq \eta \Theta_{\eps N} = \eta \eps N$, and the lemma follows.
\end{proof}

Let $(\tau_i^{\mathrm{in}})_{i \geq 0}$ and $(\tau_i^{\mathrm{out}})_{i \geq 0}$ denote the successive returns to $B_N^{1-3\eps}$ after exiting $B_N^{1 - \delta}$, that is $\tau^{\mathrm{out}}_0 = 0$ and for $i \geq 0$,
\begin{equation}
	\tau^{\mathrm{in}}_{i + 1} = \inf \big\{ t > \tau^{\mathrm{out}}_i \, : \, X_t \in B_N^{1-3\eps} \big\} \quad , \quad \tau^{\mathrm{out}}_{i+1} = \inf \big\{ t > \tau^{\mathrm{in}}_{i+1} \, : \, X_t \not\in B_N^{1-2\delta} \big\} \, .
\end{equation}
Let us write $\mathcal{N}(t_N) \defeq \sup \mathset{i \, : \, \tau_i^{\mathrm{out}} \leq t_N}$, we then have
\begin{equation}
	\Pbf_x(\mathcal{R}_{t_N} \cap \mathcal{R}_\infty^{2,\eps} = \varnothing \, | \, \mathcal{R}_{t_N} \subseteq B_N) \leq  \Pbf_x(\forall i \in \mathset{1, \dots , \mathcal{N}(t_N)}, \mathcal{R}_{[\tau^{\mathrm{in}}_i, \tau^{\mathrm{out}}_i]} \cap \mathcal{R}_\infty^{2,\eps} = \varnothing \, | \, \mathcal{R}_{t_N} \subseteq B_N) \, .
\end{equation}
Note that by definition of the times $(\tau^{\mathrm{in}}_i)_{i \geq 1}$ and $(\tau^{\mathrm{out}}_i)_{i \geq 1}$, we have the equality between events:
\begin{equation}
	\mathset{\mathcal{R}_{t_N} \subseteq B_N} = \bigcap_{i = 1}^{\mathcal{N}(t_N)} \mathset{\mathcal{R}([\tau^{\mathrm{out}}_{i-1}, \tau^{\mathrm{in}}_{i}]) \subset B_N} \cap \mathset{\mathcal{R}([\tau^{\mathrm{out}}_{\mathcal{N}(t_N)}, t_N]) \subset B_N} \, .
\end{equation}
With Lemma \ref{lem:toucher-range-excursion}, we get that for all $i \geq 1$, $\Pbf_x(\mathcal{R}([\tau^{\mathrm{in}}_i, \tau^{\mathrm{out}}_i]) \cap \mathcal{R}_\infty^{2,\eps} = \varnothing) \leq 1-c_0$. Therefore, using the Markov property at times $(\tau^{\mathrm{in}}_i)_{i \geq 1}$ and $(\tau^{\mathrm{out}}_i)_{i \geq 1}$, we get the upper bound
\begin{equation}\label{eq:proba-esquive=transfo-laplace}
	\Pbf(\mathcal{R}_{t_N} \cap \mathcal{R}_\infty^{2,\eps} = \varnothing \, , \mathcal{R}_{t_N} \subseteq B_N) \leq \Ebf_x \left[ (1-c_0)^{\mathcal{N}(t_N)} \indic{\mathcal{R}_{t_N} \subseteq B_N} \right] \, .
\end{equation}

\begin{lemma}\label{lem:transfo-laplace-nb-excursions}
	Let $\eps > 0$ be small enough. For any sequence $(t_N)_{N \geq 1}$ which satisfies $t_N/N^2 \to +\infty$, we have the convergence
	\begin{equation}
		\lim_{N \to +\infty} \Ebf \left[ (1-c_0)^{\mathcal{N}(t_N)} \, | \, \mathcal{R}_{t_N} \subseteq B_N \right] = 0 \, .
	\end{equation}
\end{lemma}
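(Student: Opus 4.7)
The plan is to show that under the conditional measure $\Pbf(\cdot \,|\, \mathcal{R}_{t_N}\subseteq B_N)$ the number $\mathcal{N}(t_N)$ of completed excursions goes to infinity fast enough to defeat the geometric factor $(1-c_0)^{\mathcal{N}(t_N)}$. Introducing a threshold $K_N$ to be tuned later, the starting point is the elementary split
\begin{equation*}
\Ebf\bigl[(1-c_0)^{\mathcal{N}(t_N)} \,\big|\, \mathcal{R}_{t_N}\subseteq B_N\bigr] \leq (1-c_0)^{K_N} + \Pbf\bigl(\mathcal{N}(t_N) \leq K_N \,\big|\, \mathcal{R}_{t_N}\subseteq B_N\bigr).
\end{equation*}
Lemma~\ref{lem:rester-ds-boule} gives $\Pbf_0(\mathcal{R}_{t_N}\subseteq B_N) \geq c^{-1}\lambda_N^{t_N}$, so everything boils down to a good upper bound on the joint probability $\Pbf_0(\mathcal{N}(t_N) \leq K_N, \mathcal{R}_{t_N}\subseteq B_N)$.

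To obtain it, I will use the following structural observation. The trajectory on $[0,t_N]$ is partitioned into alternating \emph{inner phases} $[\tau_i^{\mathrm{in}}, \tau_i^{\mathrm{out}}]$ (where $S_t\in B_N^{1-2\delta}$) and \emph{annulus phases} $[\tau_i^{\mathrm{out}}, \tau_{i+1}^{\mathrm{in}}]$ (where $S_t\in B_N\setminus B_N^{1-3\eps}$); on $\{\mathcal{N}(t_N)\leq K\}$ there are at most $2(K+1)$ such phases in total, so by pigeonhole one of them has duration at least $\ell/2$ with $\ell\defeq t_N/(K+1)$. Discretising its starting time on a grid of step $\ell/4$, this event is covered by the union over $O(K)$ values of $s$ of either $\{\mathcal{R}([s,s+\ell/2])\subseteq B_N^{1-2\delta}\}$ or $\{\mathcal{R}([s,s+\ell/2])\subseteq B_N\setminus B_N^{1-3\eps}\}$. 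Applying the Markov property at $s$ and $s+\ell/2$, using Lemma~\ref{lem:rester-ds-boule} on the outer pieces (and on the middle piece in the inner case, but now for the smaller ball $B_N^{1-2\delta}$) together with~\eqref{eq:proba-rester-ds-anneau} on the middle piece in the annulus case, I expect to arrive at
\begin{equation*}
\Pbf\bigl(\mathcal{N}(t_N)\leq K_N \,\big|\, \mathcal{R}_{t_N}\subseteq B_N\bigr) \leq C(K_N+1)\Bigl[\bigl(\lambda_{(1-2\delta)N}/\lambda_N\bigr)^{\ell/2} + \lambda_N^{-\ell/2}\exp\bigl(-c_\eps \ell/(18\eps^2 N^2)\bigr)\Bigr].
\end{equation*}
Thanks to the expansion~\eqref{eq:encadrement-lambda}, both bracketed terms are of the form $\exp(-c''\ell/N^2)$ with $c''>0$, provided $\delta$ is chosen small enough that $\lambda_N/\lambda_{(1-2\delta)N}$ has a strictly positive logarithmic gap of order $N^{-2}$, and $\eps$ is small enough that $c_\eps/(18\eps^2)$ exceeds the constant in $-\log\lambda_N$.

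To finish, it suffices to pick $K_N\to\infty$ with $\ell/N^2 = t_N/((K_N+1)N^2)\to\infty$, for instance $K_N = \lfloor (t_N/N^2)^{1/2}\rfloor$, which is admissible thanks to the hypothesis $t_N/N^2\to+\infty$. With this choice both $(1-c_0)^{K_N}$ and $(K_N+1)e^{-c''\ell/N^2}$ vanish, yielding the lemma. The main obstacle is the middle-piece estimate: the exponential decay coming from the walk being confined to the slightly smaller ball (respectively, to the annulus) must beat the $\lambda_N^{-\ell/2}$ factor introduced when normalising by $\Pbf_0(\mathcal{R}_{t_N}\subseteq B_N)^{-1}\asymp \lambda_N^{-t_N}$. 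This is possible only because both $1/(1-2\delta)^2 - 1$ and $c_\eps/(18\eps^2)$ can be made strictly positive, which is exactly why the intermediate radii $B_N^{1-3\eps}\subsetneq B_N^{1-2\delta}\subsetneq B_N$ were defined with genuinely distinct scales.
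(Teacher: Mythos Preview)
Your argument is correct and follows essentially the same route as the paper: the same threshold split, the same pigeonhole to locate a long inner or annulus phase, the same grid discretisation plus Markov property at the endpoints, and the same appeal to Lemma~\ref{lem:rester-ds-boule} and~\eqref{eq:proba-rester-ds-anneau} for the two cases. The only differences are cosmetic---the paper uses a fixed $K$ and sends $K\to\infty$ afterwards rather than a diagonal $K_N$, and takes grid intervals of length $\ell/4$ rather than $\ell/2$ (your version needs this small adjustment too, since a phase of duration exactly $\ell/2$ need not contain a grid-aligned interval of the same length).
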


\begin{proof}
	First write for any $K > 1$,
	\begin{equation}\label{eq:nb-moyen-excursion-UB}
		\lim_{N \to +\infty} \Ebf_x \left[ (1-c_0)^{\mathcal{N}(t_N)} \, \big| \, \mathcal{R}_{t_N} \subseteq B_N \right] \leq (1-c_0)^{K} + \lim_{N \to +\infty} \Pbf_x(\mathcal{N}(t_N) \leq K \, | \, \mathcal{R}_{t_N} \subseteq B_N ) \, .
	\end{equation}
	Hence, it suffices to prove that for any $K > 1$, the probability on the right-hand side of \eqref{eq:nb-moyen-excursion-UB} goes to $0$ as $N \to +\infty$. Recall that $\mathcal{N}(t_N)$ counts the number of excursions from $B_N^{1-3\eps}$ to $B_N^{1-\delta}$ achieved before time $t_N$. We can expect an excursion to take a time $\asymp N^2$ and thus have $\asymp t_N/N^2$ excursions. If $\mathcal{N}(t_N)$ is smaller then expected, this means that the walk spent too much time on one excursion.
	
	Note that $\mathcal{N}(t_N) \leq K$ implies that at least one of these excursions has length at least $t_N/K$, meaning that there exists a $i \in \llbracket 1, K \rrbracket$ such that $\tau^{\mathrm{in}}_i - \tau^{\mathrm{in}}_{i-1} \geq t_N/K$. For such $i$, this also implies that $\tau^{\mathrm{out}}_{i+1} - \tau^{\mathrm{in}}_i$ or $\tau^{\mathrm{in}}_{i+1} - \tau^{\mathrm{out}}_{i+1}$ is greater than $t_N/2K$. Finally, observe that writing $I_{k,K}^N \defeq \llbracket k t_N/4K, (k+1) t_N/4K \rrbracket$, all of the above imply that there is an interval of time $I_k^{K,(N)}$ such that $\mathcal{R}(I_k^{K,(N)}) \subseteq B_N^{1-\delta}$ or $\mathcal{R}(I_{k,K}^N) \subseteq B_N \setminus B_N^{1-\delta}$.
	
	If we summarize the previous paragraph, using two consecutive union bounds we get
	\begin{align}\label{eq:pas-assez-excursions-union-bound}
		\Pbf_x &(\mathcal{N}(t_N) \leq K \, | \, \mathcal{R}_{t_N} \subseteq B_N ) \notag \\
		&\leq \Pbf_x(\exists k \in \llbracket 0, 4K \rrbracket, \mathcal{R}(I_{k,K}^N) \subseteq B_N^{1- \delta} \text{ or } \mathcal{R}(I_{k,K}^N) \subseteq B_N \setminus B_N^{1-\delta} \, | \, \mathcal{R}_{t_N} \subseteq B_N) \\
		&\leq 2K \sup_{k \in \llbracket 0, 4K \rrbracket} \Big( \Pbf \big( \mathcal{R}(I_{k,K}^N) \subseteq B_N^{1- \delta} \, | \, \mathcal{R}_{t_N} \subseteq B_N \big) + \Pbf \big( \mathcal{R}(I_{k,K}^N) \subseteq B_N \setminus B_N^{1-\delta} \, | \, \mathcal{R}_{t_N} \subseteq B_N \big) \Big) \, . \notag
	\end{align}
	Using the same trick as in the proof of Proposition \ref{prop:tilt:borne-sup}, we use the Markov property at the start and the end of $I_{k,K}^N$ to control both of the conditional probabilities. For the first term, we get the upper bound
	\begin{equation}
		\Pbf(\mathcal{R}(I_{k,K}^N) \subseteq B_N^{1-2 \delta} \, | \, \mathcal{R}_{t_N} \subseteq B_N) \leq c_1 \lambda_N^{-t_N/4K} \sup_{z \in B_N^{1- \delta}} \Pbf_z(\mathcal{R}_{t_N/4K} \subseteq B_N^{1- \delta}) \, ,
	\end{equation}
	for which a use of Lemma \ref{lem:rester-ds-boule} yields
	\begin{equation}\label{eq:excursion-int-longue}
		\Pbf(\mathcal{R}(I_{k,K}^N) \subseteq B_N^{1- \delta} \, | \, \mathcal{R}_{t_N} \subseteq B_N) \leq c_1' \lambda_N^{(\frac{1}{(1-\delta)^2} - 1)t_N/4K} \leq c_1'' \exp \Big( -c_b 2\delta^2 \frac{t_N}{4KN^2} \Big) \, .
	\end{equation}
	On the other hand, with the same reasoning and a use of \eqref{eq:proba-rester-ds-anneau}, we get
	\begin{equation}\label{eq:excursion-ext-longue}
		\Pbf \Big(\mathcal{R}(I_{k,K}^N) \subseteq B_N \setminus B_N^{1-\delta} \, | \, \mathcal{R}_{t_N} \subseteq B_N) \leq c_2'' \exp \left( - (\tfrac{c_1}{\eps^2} - c_b) \frac{t_N}{4KN^2} \right) \, .
	\end{equation}
	Therefore, assuming that we took $\eps > 0$ small enough, injecting \eqref{eq:excursion-int-longue} and \eqref{eq:excursion-ext-longue} in \eqref{eq:pas-assez-excursions-union-bound} finally yields the bound
	\begin{equation}
		\Pbf_x(\mathcal{N}(t_N) \leq K \, | \, \mathcal{R}_{t_N} \subseteq B_N ) \leq c \exp \bigg( - \frac{c'}{4K} \frac{t_N}{N^2} \bigg)
	\end{equation}
	for some constants $c,c' > 0$ that do not depend on $N$ or $K$. In particular, we get the Lemma by using that $t_N N^{-d} \Theta_N \to +\infty$ means that in dimension three $t_N / N^2 \to +\infty$.
\end{proof}

\begin{remark}
	Note that this is the only proof that relies on the fact that we consider the random walk in a ball to get a lower bound in \eqref{eq:excursion-ext-longue} on the probability of staying inside $B_N$. If we consider a more general domain, we can apply Lemma \ref{lem:eigentrucs} to get the upper bound in \eqref{eq:excursion-ext-longue}, but applying the lemma works only for $t_N \gg N^2 \log N$. We believe however that Lemma \ref{lem:transfo-laplace-nb-excursions} should still hold for $t_N \gg N^2$, but its proof would require a more subtle approach.
\end{remark}

\begin{proof}[Proof of Proposition \ref{prop:proba-avoid-bon-event} for $d = 3$]
	Combining \eqref{eq:proba-esquive=transfo-laplace} with Lemma \ref{lem:transfo-laplace-nb-excursions} yields that on the event $\mathcal{A}^{\eps,\eta}_N$, for every configuration of $\mathcal{R}^{2,\eps}_\infty$,
	\[ \limsup_{N \to +\infty} \Pbf(\mathcal{R}_{t_N} \cap \mathcal{R}_\infty^{2,\eps} = \varnothing \, , \mathcal{R}_{t_N} \subseteq B_N) = 0 \, . \]
	Therefore, by noting again that
	\[ \Ebf^1_0 [ \Pbf^2_z(\mathcal{R}^2_\infty \cap \mathcal{R}^1_{t_N} = \varnothing, \mathcal{A}_{N,2}^{\eps, \eta}) \, | \, \mathcal{R}^1_{t_N} \subseteq B_N] = \Ebf^2_z[ \Pbf^1_0 (\mathcal{R}^2_\infty \cap \mathcal{R}^1_{t_N} = \varnothing \, | \, \mathcal{R}^1_{t_N} \subseteq B_N) \mathbbm{1}_{\mathcal{A}_{N,2}^{\eps, \eta}}] \, , \]
	we obtain that uniformly in $z \in \partial B_N$,
	\[ \limsup_{N \to +\infty} \Ebf^1_0 [ \Pbf^2_z(\mathcal{R}^2_\infty \cap \mathcal{R}^1_{t_N} = \varnothing, \mathcal{A}_{N,2}^{\eps, \eta}) \, | \, \mathcal{R}^1_{t_N} \subseteq B_N] = 0 \, . \]
	The uniformity in $z \in \partial B_N$ ensures that this holds if we integrate with respect to $\bar{e}_{B_N}$, hence proving the proposition.
\end{proof}

\section*{Appendix}

A classical result for the simple random walk states that the exit point of a ball is roughly uniform provided that the starting point is not too close to the boundary. Recall that $\tau_R$ is the exit time of the discrete Euclidean ball $B_R = B(0,R)$.

\begin{proposition}[{\cite[Lemma~6.3.7]{lawlerRandomWalkModern2010}}]\label{prop:SRW-sortie-unif}
	There is a constant $c_3 > 0$ such that, for any $R$ large enough,
	\begin{equation}
		\label{eq:SRW-sortie-boule}
		\frac{1}{c_3 R^{d-1}} \leq \inf_{u \in \bar B_{R/4}} \inf_{z \in \partial B_{R}} \Pbf_u\big( X_{\tau_R} = z \big) \leq \sup_{u \in \bar B_{R/4}} \sup_{z \in \partial B_{R}} \Pbf_u\big( X_{\tau_R} = z \big) \leq \frac{c_3}{R^{d-1}} \, .
	\end{equation} 
\end{proposition}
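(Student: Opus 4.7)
The statement is the classical fact that the discrete harmonic measure on $\partial B_R$ as seen from a point in the bulk $\bar B_{R/4}$ is comparable to the uniform measure on $\partial B_R$, with constants independent of $R$. My plan is to adapt the potential-theoretic argument of \cite[Chapter~6]{lawlerRandomWalkModern2010}, reducing the problem to sharp two-sided Green's function estimates for the SRW killed on exiting $B_R$.

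The first step is a \emph{last-exit decomposition}: setting
\[
G_{B_R}(u,y) \defeq \Ebf_u\Big[\sum_{n < \tau_R} \indic{S_n = y}\Big] \, ,
\]
any $z \in \partial B_R$ satisfies
\[
\Pbf_u\big( X_{\tau_R} = z \big) = \sum_{y \in B_R, \, y \sim z} \frac{1}{2d}\, G_{B_R}(u,y) \, ,
\]
since every trajectory exiting at $z$ must do so via a last step $y \to z$ from a neighbour $y$ of $z$ inside $B_R$. The sum contains $O(1)$ terms, each corresponding to a site $y$ at lattice distance $1$ from $\partial B_R$. Hence the proposition is equivalent to the uniform two-sided bound $G_{B_R}(u,y) \asymp R^{1-d}$, for $u \in \bar B_{R/4}$ and $y$ adjacent to $\partial B_R$.

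The heart of the argument is therefore to establish this Green's function estimate. It is the lattice counterpart of the fact that a Dirichlet Green's function vanishes linearly near a regular boundary, with linear prefactor given by the Poisson kernel $\tfrac{1}{\omega_{d-1} R} \cdot \tfrac{R^2 - |u|^2}{|z-u|^d}$; for $|u| \leq R/4$ and $|z| = R$ elementary bounds give $R^2 - |u|^2 \asymp R^2$ and $|z-u| \asymp R$, so this density is of order $R^{1-d}$. On the lattice, I would start from the identity $G_{B_R}(u,y) = g(u,y) - \Ebf_u[g(X_{\tau_R},y)]$, with $g$ the full-lattice Green's function satisfying $g(x,y) = c_d |x-y|^{2-d}(1 + O(|x-y|^{-1}))$ for $d \geq 3$. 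The subtraction encodes exactly the linear vanishing near $\partial B_R$, and a bootstrap using the discrete Harnack inequality on $B_{R/2}$ both propagates the pointwise estimates into uniform ones over $u \in \bar B_{R/4}$, and provides the matching lower bound.

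\textbf{Main obstacle.} The difficulty is that the lattice sphere $\partial B_R$ is neither smooth nor rotationally symmetric, so the continuous Poisson-kernel formula cannot be used as a black box; in particular, $\Ebf_u[g(X_{\tau_R},y)]$ can exhibit near-cancellation with $g(u,y)$ when $y$ sits close to the boundary, and the estimates must be carried out with sharp error terms. Handling this requires the precise lattice Green-function asymptotics together with the discrete Harnack inequality, both of which are developed at length in \cite[Chapter~6]{lawlerRandomWalkModern2010}, from which the present lemma is extracted.
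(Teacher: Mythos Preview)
Your reduction is correct: the last-step identity
\[
\Pbf_u\big( X_{\tau_R} = z \big) = \frac{1}{2d}\sum_{y \in B_R,\, y \sim z} G_{B_R}(u,y)
\]
is valid, and the proposition is indeed equivalent to the two-sided bound $G_{B_R}(u,y) \asymp R^{1-d}$ for $u \in \bar B_{R/4}$ and $y$ adjacent to $\partial B_R$. Harnack on $B_{R/2}$ correctly reduces this to the single estimate $G_{B_R}(0,y) \asymp R^{1-d}$.

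The paper's one-line sketch, however, points to a different mechanism: a last-exit decomposition at the \emph{intermediate} sphere $\partial B_{R/2}$, followed by time reversal of the final excursion. The distinction matters precisely at the step you flag as the main obstacle. In your route, the lower bound $G_{B_R}(0,y) \geq c R^{1-d}$ for $y$ at distance $O(1)$ from $\partial B_R$ cannot be read off the identity $G_{B_R}(0,y) = g(0,y) - \Ebf_0[g(X_{\tau_R},y)]$ directly, because both terms are of order $R^{2-d}$ and the difference is one order smaller; one needs the refined near-boundary expansion (linear vanishing) from Lawler--Limic, which itself already uses the exit-distribution estimates one is after. The intermediate-sphere decomposition sidesteps this: one writes $G_{B_R}(y,0) = \sum_{w} \Pbf_y(H_{B_{R/2}} < \tau_R,\, S_{H_{B_{R/2}}} = w)\, G_{B_R}(w,0)$, where now $G_{B_R}(w,0) \asymp R^{2-d}$ is an easy bulk estimate, and the gambler's-ruin factor $\Pbf_y(H_{B_{R/2}} < \tau_R) \asymp 1/R$ supplies the missing power. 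Time reversal of the last excursion is what makes the hitting distribution on $\partial B_{R/2}$ tractable from the boundary side.

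In short: your plan is sound and ultimately relies on the same ingredients from \cite[Chapter~6]{lawlerRandomWalkModern2010}, but the paper's decomposition at $\partial B_{R/2}$ is organised so that the delicate near-boundary cancellation never has to be confronted head-on.
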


The proof relies on a use of a last exit decomposition on the discrete sphere $\partial B_{R/2}$ and time reversal of the last excursion.

An important consequence of Proposition \ref{prop:SRW-sortie-unif} is that given two points $x,y$ at typical distance $R$, the random walk has a positive probability of going from $x$ to a $\eta R$-neighborhood of $y$ while staying in a straight narrow corridor of width $\delta R$.

\begin{lemma}\label{lem:proba-approcher-dist-macro-point}
	Let $\eta, \delta > 0$, and for $\ell > 1$ write $\mathscr{R}_\ell(x,y) \defeq [x,y] + B(0,\ell)$ for the $\ell$ neighborhood of the segment $[x,y]$. Then, there is a constant $c_{\eta,\delta} > 0$ such that for any $R$ large enough,
	\begin{equation}
		\sup_{x,y ; |x-y| \leq R} \Pbf_x \Big( H_{B(y,\eta R)} < H_{\ZZ^d \setminus \mathscr{R}_{\delta R}(x,y)} \Big) \geq c_{\eta,\delta} \, .
	\end{equation}
\end{lemma}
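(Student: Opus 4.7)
If $L := |x-y| \leq \eta R$ then $x \in B(y, \eta R)$ and the probability is $1$, so assume $\eta R < L \leq R$. The plan is to chain a bounded number of applications of Proposition~\ref{prop:SRW-sortie-unif} along the segment $[x,y]$. Fix $k := \lceil 10/\delta \rceil$ and $\rho := L/k \leq \delta R/10$, and let $z_i := x + (i/k)(y-x)$ for $i = 0, \dots, k$, so $z_0 = x$ and $z_k = y$. The argument is inductive: with $\alpha \in (0, 1/4)$ a small constant to be chosen, I will show that with probability at least $(c_d \alpha^{d-1})^k$ the walk visits successive points $w_0 = x, w_1, \dots, w_k$ satisfying $|w_i - z_i| \leq \alpha \rho$ while remaining inside $\mathscr{R}_{\delta R}(x,y)$. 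Since $k$ depends only on $\delta$, taking $\alpha$ small enough (depending on $\eta,\delta$) so that $\alpha \rho \leq \eta R$ then gives the lemma.

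The inductive step is the heart of the argument. Given $w_i$ with $|w_i - z_i| \leq \alpha \rho$, consider the ball $B(w_i, r_i)$ of radius $r_i := |w_i - z_{i+1}| \in [(1-\alpha)\rho, (1+\alpha)\rho]$, on whose sphere $z_{i+1}$ lies by construction. Since $w_i$ is the centre (hence trivially in the inner quarter), Proposition~\ref{prop:SRW-sortie-unif} applies — for $R$ large enough that $r_i$ is large — and gives that the exit distribution on $\partial B(w_i, r_i)$ is uniform up to multiplicative constants. The lattice sites on this sphere within distance $\alpha r_i$ of $z_{i+1}$ form a spherical cap of angular aperture $\asymp \alpha$ around $z_{i+1}$ seen from $w_i$, of cardinality $\asymp (\alpha r_i)^{d-1}$; hence the probability that the walk exits $B(w_i, r_i)$ inside this cap is at least $c_d \alpha^{d-1}$. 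The resulting exit point $w_{i+1}$ satisfies $|w_{i+1} - z_{i+1}| \leq \alpha r_i \leq 2\alpha\rho$, and after halving $\alpha$ once the invariant is preserved.

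Two verifications remain. First, each ball $B(w_i, r_i)$ must lie inside the tube, so that the excursion from $w_i$ to $w_{i+1}$ is automatically contained in $\mathscr{R}_{\delta R}(x,y)$: this holds because $w_i$ is within $\alpha \rho$ of the segment and $r_i \leq 2\rho \leq \delta R/5$, placing the whole ball within $\delta R/4$ of $[x,y]$. Second, the final iterate must land in $B(y, \eta R)$: $|w_k - y| \leq \alpha \rho \leq \alpha R/k \leq \eta R$ provided $\alpha \leq k\eta$, a harmless constraint. Concatenating the $k$ one-step bounds via the strong Markov property then yields the lemma with constant $c_{\eta,\delta} = (c_d \alpha^{d-1})^k > 0$. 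The main (minor) subtlety is simply to coordinate the finitely many constraints on $\alpha, k, \rho$; since all of them depend only on $\eta,\delta,d$, this is a finite check rather than a genuine obstacle. Conceptually the only real choice is how to align the balls with the target points, and the construction above — balls centred at the current position with radius chosen exactly so that the next subdivision point lies on the boundary — is what makes Proposition~\ref{prop:SRW-sortie-unif} directly applicable at every step.
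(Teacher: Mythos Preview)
Your proof is correct and follows essentially the same chaining argument as the paper's sketch: both place a bounded number of waypoints along $[x,y]$ and iterate Proposition~\ref{prop:SRW-sortie-unif} to push the walk from one cap to the next while staying in the tube. The only cosmetic difference is that you center each ball at the random current position $w_i$ (so the ``inner quarter'' hypothesis is trivially satisfied), whereas the paper centers at the deterministic waypoint $z_i$ and uses the exit location from the previous step to land in the inner quarter; your bookkeeping for the invariant $|w_i-z_i|\le \alpha\rho$ is slightly loose as written but, as you note, is fixed by a single rescaling of $\alpha$.
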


\begin{proof}[Sketch of proof]
	Fix $x,y$. There is some $I = I(x,y) \in \NN$ such that there exists a chain $(z_i)_{i = 0}^{I}$ which satisfies $z_0 = x$, $z_{I} = y$, and for all $i \leq I-1$, we have $|z_{i+1}-z_i| = \delta R$ and $B(z_i,\delta R) \subset \mathscr{R}_{\delta R}(x,y)$. Then, there is a $c_\delta > 0$ such that for all $i$ we have $|\partial B(z_i, \delta R) \cap B(z_{i+1}, \tfrac14 \delta R)| \geq c_\delta R^{d-1}$. In particular, using Proposition \ref{prop:SRW-sortie-unif}, there is a $c_\delta' > 0$ such that for all $i$, \[ \Pbf_{z_i}(X_{H_{\partial B(z_i,\delta R)}} \in B(z_{i+1},\delta R/4)) \geq c_\delta' \, . \]
	Therefore, by the Markov property, $\Pbf_x(H_{B_R^{y,\delta}} < H_{\ZZ^d \setminus \mathscr{R}_{\delta R}(x,y)}) \geq (c_\delta')^{I}$. Note that we can bound from above the length $I(x,y)$ of the chain uniformly in $x$ and $y$, by some $I_0 = I_0(\delta)$. Therefore, we deduce that $\Pbf_x(H_{B(y,\delta R)} < H_{\ZZ^d \setminus \mathscr{R}_{\delta R}(x,y)}) \geq (c_\delta')^{I_0} = c(\delta) > 0$. Note that with such ``chain'' event, the SRW stays at distance at most $\delta R$ from the segment $[x,y]$, thus concluding the proof.
\end{proof}

With such estimate, we can get a lower bound on the Green function of the SRW killed on the boundary of a large ball $B_R$. The following lemma states that, restricting ourselves to the bulk of the ball, $G_{B_R}$ is comparable to $G$.

\begin{lemma}\label{lem:LB-green-bulk-boule}
	Fix $\eps > 0$. There is a constant $c_\eps > 0$ such that for all $R$ large enough, we have
	\begin{equation}
		\inf_{x,y \in B_{(1-\eps)R}} G_{B_R}(x,y) \geq c_\eps R^{2-d} \, .
	\end{equation}
	Note that we trivially have $G_{B_R}(x,y) \leq G(x,y) \leq C_d R^{2-d}$.
\end{lemma}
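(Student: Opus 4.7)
The plan is to use the strong Markov property at the first visit to $y$ to derive the classical identity
\[ G_{B_R}(x,y) = \Pbf_x(H_y \leq H_{\partial B_R}) \cdot G_{B_R}(y,y), \]
which reduces the lemma to lower bounds on each factor, uniform for $x, y \in B_{(1-\eps)R}$ and $R$ large.

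For the second factor: since $y \in B_{(1-\eps)R}$, the ball $B(y, \eps R)$ is contained in $B_R$, so by translation invariance and monotonicity of the Green's function in the domain,
\[ G_{B_R}(y,y) \geq G_{B(0, \eps R)}(0, 0) \xrightarrow[R \to \infty]{} g_0 \in (0, +\infty), \]
the finiteness being the standard transience of the walk in $d \geq 3$. Hence $G_{B_R}(y,y) \geq g_0/2$ for $R$ large, uniformly in $y$.

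The bulk of the work is the lower bound $\Pbf_x(H_y \leq H_{\partial B_R}) \geq c_\eps R^{2-d}$, which I handle in two phases. The first phase brings the walk from $x$ into a small ball $B(y, \eta R)$ (with $\eta = \eta(\eps) \in (0, \eps/2)$ to be fixed) while staying inside $B_R$. Since $[x,y] \subset B_{(1-\eps)R}$ by convexity, the corridor $\mathscr{R}_{\delta R}(x,y)$ of width $\delta R = \eps R/2$ sits inside $B_R$, so Lemma \ref{lem:proba-approcher-dist-macro-point} (possibly rescaling $R$ by a constant since $|x-y| \leq 2R$) yields $\Pbf_x(H_{B(y, \eta R)} \leq H_{\partial B_R}) \geq c_{\eta, \eps} > 0$. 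In the second phase, for any $z \in B(y, \eta R)$, I decompose
\[ \Pbf_z(H_y \leq H_{\partial B_R}) \geq \Pbf_z(H_y < +\infty) - \Pbf_z(H_{\partial B_R} < H_y < +\infty). \]
Using $\Pbf_z(H_y < +\infty) = G(z,y)/g_0$ together with the classical asymptotic $G(z,y) \asymp |z-y|^{2-d}$ (valid for $d \geq 3$), the first term is at least $c (\eta R)^{2-d}$. The second, by the strong Markov property at $H_{\partial B_R}$, is at most $\sup_{w \in \partial B_R} G(w,y)/g_0 \leq C (\eps R)^{2-d}$ since $|w-y| \geq \eps R$ on $\partial B_R$. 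Choosing $\eta$ small enough relative to $\eps$ (feasible since $\eta^{2-d} \to +\infty$ as $\eta \downarrow 0$), the first term dominates and I obtain $\Pbf_z(H_y \leq H_{\partial B_R}) \geq c'_\eps R^{2-d}$ uniformly in $z \in B(y, \eta R)$. Combining the two phases via strong Markov at $H_{B(y, \eta R)}$ closes the estimate.

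The main technical delicacy is the tuning of the two scales $\delta = \eps/2$ (so the corridor stays in $B_R$) and $\eta = \eta(\eps)$ small enough (so that the $(\eta R)^{2-d}$ hitting gain dominates the exit correction of size $(\eps R)^{2-d}$); beyond this, the remaining steps are standard potential theory in $\ZZ^d$ for $d \geq 3$.
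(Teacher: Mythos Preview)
Your proof is correct and follows the same two-step skeleton as the paper: first use Lemma~\ref{lem:proba-approcher-dist-macro-point} to bring the walk into a small ball around $y$ while staying in $B_R$, then extract the $R^{2-d}$ factor from Green's function asymptotics near $y$. The only difference lies in the second step: the paper bounds $\inf_{z\in\partial B^y_{\delta R}} G_{B_R}(z,y)$ directly by $G_{B_{2\delta R}}(z,0)$ and invokes the explicit killed-Green's-function formula from \cite[Prop.~6.3.5]{lawlerRandomWalkModern2010} (so any fixed $\delta<\eps/2$ works, no tuning required), whereas you split off $G_{B_R}(y,y)$ and handle $\Pbf_z(H_y<H_{\partial B_R})$ via a free-Green's-function subtraction that forces you to choose $\eta$ small relative to $\eps$; both routes are standard and equally valid.
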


\begin{proof}
	Fix $\delta \in (0,\tfrac12 \eps)$ and consider the ball $B_{\delta R}^y$ centered at $y$ with radius $\delta R$. We have the lower bound
	\begin{equation}\label{eq:LB-green-boule}
		\begin{split}
			G_{B_R}(x,y) &\geq \Pbf_x(H_{B_{\delta R}^y} < H_{\partial B_R}) \inf_{z \in \partial B_{\delta R}^y} G_{B_R}(z,y) \\
			&\geq \Pbf_x \big( H_{B_{\delta R}^y} < H_{\partial B_R} \big) \inf_{z \in \partial B_{\delta R}} G_{B_{2\delta R}}(z,0) \, ,
		\end{split}
	\end{equation}
	where we have used the monoticity of $G_A$ with respect to the domain $A$ (since $B_{2 \delta R}^y \subseteq B_R$ by assumption on $\delta$) and the invariance of $\Pbf$ with respect to translations.
	Now, using \cite[Proposition 6.3.5]{lawlerRandomWalkModern2010}, we have for $R$ large enough:
	\[ G_{B_{\delta R}}(z,0) = a_d \big( |z|^{2-d} - (\delta R)^{2-d} \big) + \grdO(R^{1-d}) \geq \frac{a_d}{4 \delta^{d-2}} \frac{1}{R^{d-2}} \, . \]
	According to Lemma \ref{lem:proba-approcher-dist-macro-point} and since $\delta < \tfrac12 \eps$, the probability appearing in \eqref{eq:LB-green-boule} can be bounded from below by a constant $c = c(\delta)$ that is uniform in $x,y \in B_{(1-\eps)R}$, thus proving the lemma.
\end{proof}

	\bibliographystyle{agsm}
	\bibliography{capacite-RI-CRW-boule}
	
\end{document}